\def\url@leostyle{%
  \@ifundefined{selectfont}{\def\UrlFont{\sf}}{\def\UrlFont{\small\ttfamily}}}
\newlength{\defbaselineskip}
\newcommand{\setlinespacing}[1]%
           {\setlength{\baselineskip}{#1 \defbaselineskip}}
\journal{ArXiv}
\newtheorem{theorem}{Theorem}[section]
\newtheorem{lemma}[theorem]{Lemma}
\newtheorem{corollary}[theorem]{Corollary}
\newtheorem{proposition}[theorem]{Proposition}
\newtheorem{claim}[theorem]{Claim}
\theoremstyle{definition}
\newtheorem{definition}[theorem]{Definition}
\newtheorem{remark}[theorem]{Remark}
\newtheorem{example}[theorem]{Example}
\newtheorem{notation}[theorem]{Notation}
\newtheorem*{acks}{Acknowledgements}
\newcommand{\N}{\ensuremath{\mathbb{N}}}
\newcommand{\Z}{\ensuremath{\mathbb{Z}}}
\newcommand{\R}{\ensuremath{\mathbb{R}}}
\newcommand{\CC}{\ensuremath{\mathbb{C}}}
\renewcommand{\int}{\ensuremath{[0,1]}}
\renewcommand{\leq}{\leqslant}
\renewcommand{\geq}{\geqslant}
\newcommand{\m}{\mathfrak{m}}
\newcommand{\h}{\mathfrak{h}}
\DeclareMathOperator{\rad}{Rad}
\DeclareMathOperator{\Max}{Max}
\newcommand{\id}[1]{{\rm Id}_{#1}}
\DeclareMathOperator{\fm}{\mathnormal{f}_{\frac{1}{2}}}
\DeclareMathOperator{\fmn}{\mathnormal{f}_{\frac{1}{2^\mathnormal{n}}}}
\DeclareMathOperator{\fmnn}{\mathnormal{f}_{\frac{1}{2^\mathnormal{n-1}}}}
\DeclareMathOperator{\fmi}{\mathnormal{f}_{\frac{1}{2^\mathnormal{i}}}}
\DeclareMathOperator{\fmm}{\mathnormal{f}_{\frac{1}{2^\mathnormal{m}}}}
\DeclareMathOperator{\fix}{Fix} % Fixed objects
\newcommand{\forg}[1]{\ensuremath{\lvert#1\rvert}} % Forgetful functor
\renewcommand{\d}{{\rm d}} % Chang's distance
\renewcommand{\H}{{\rm H}} % Hermitian elements
\DeclareMathOperator{\C}{{\rm C}}
\newcommand{\KH}{\ensuremath{\sf K}} %% Compact HausdorffSpaces
\newcommand{\Top}{\ensuremath{\sf Top}}  %Topological spaces
\newcommand{\St}{\ensuremath{\sf St}} % %% Stone spaces
\newcommand{\MV}{\ensuremath{\sf MV}}
\newcommand{\Set}{\ensuremath{\sf Set}}
\newcommand{\Cst}{\ensuremath{\sf C^*}}
\newcommand{\Grp}{\ell \sf G_u} % Abelian unital l-groups
\newcommand{\Va}{\ensuremath{\sf \Delta}} %% Delta-Algebras
\newcommand{\2}{{\mathbbm 2}}
\newcommand{\I}{{I}} %The unit interval
\newcommand{\Tb}{{\sf T}_{\2}}
\newcommand{\Td}{{\sf T}}
\DeclareMathOperator{\V}{\mathbb{V}} % Zeroset
\begin{document}
\begin{frontmatter}
\title{Stone duality above dimension zero:\\ Axiomatising the algebraic theory of $\C(X)$}
\author[milano]{Vincenzo Marra}
\ead{vincenzo.marra@unimi.it}
\author[milano,parigi]{Luca Reggio}
\ead{luca.reggio@irif.fr}
\address[milano]{Dipartimento di Matematica ``Federigo Enriques'', Universit\`a degli Studi di Milano, via Cesare Saldini 50, 20133 Milano, Italy}
\address[parigi]{Universit\'e Paris Diderot, Sorbonne Paris Cit\'e, IRIF, Case 7014, 75205 Paris Cedex 13, France}

%%% Keywords %%%

\begin{keyword} Algebraic theories \sep Stone duality \sep Boolean algebras \sep MV-algebras  \sep Lattice-ordered Abelian groups \sep $\mathrm{C}^*$-algebras \sep  Rings of continuous functions \sep  Compact Hausdorff spaces \sep Stone-Weierstrass Theorem \sep Axiomatisability.
\MSC[2010]{Primary: 03C05. %Equational classes, universal algebra
Secondary: 06D35 \sep 06F20 \sep 46E25.}
% MV-algebras
% Ordered Abelian groups
% Rings and algebras of continuous functions
\end{keyword}
\begin{abstract}
It has been known since the work of Duskin and Pelletier four decades ago that $\KH^{\rm op}$, the opposite of the category of compact Hausdorff spaces and continuous maps, is 
monadic over the category of sets. It follows that $\KH^{\rm op}$ is equivalent to a possibly infinitary variety of algebras $\Va$  in the sense of S{\l}omi\'nski and Linton. 
Isbell  showed in 1982 that the Lawvere-Linton algebraic theory of $\Va$ can be generated using a finite number of finitary operations, together with a single operation of 
countably infinite arity. In 1983, Banaschewski  and Rosick\'y independently proved a conjecture of Bankston,  establishing a strong negative result on the axiomatisability of $\KH^{\rm op}$. 
 In particular,  $\Va$ is not a finitary variety --- Isbell's result is best possible.  The problem of axiomatising $\Va$ by 
equations has remained open. Using the theory of Chang's MV-algebras as a key tool, along with Isbell's fundamental insight on the semantic 
nature of the infinitary operation, we provide a \emph{finite} axiomatisation of $\Va$. 
\end{abstract}
\end{frontmatter}
\section{Introduction.}
In the category  $\Set$ of sets and functions, coordinatise the two-element set as $\2:=\{0,1\}$. Consider the full subcategory $\Tb$ of $\Set$ whose objects are the finite Cartesian powers $\2^{n}$. 
Equipped with distinguished projection functions $\2^{n}\to \2$ to play the r\^{o}le of variables, $\Tb$ is  
the Lawvere algebraic theory \cite{lawvere63} of Boolean algebras. That is, the category $[\Tb,\Set]_{\rm fpp}$ of all functors from $\Tb$ to $\Set$ that 
preserve finite products is the variety of Boolean 
algebras. This variety is finitary, as witnessed by the fact that $n$  only ranges over  finite sets. The full subcategory of free finitely 
generated Boolean algebras is then dually equivalent to $\Tb$, by (Lawvere's) construction; consequently, the full subcategory of finitely 
presented Boolean 
algebras is dually equivalent to the category $\Set_{f}$ of finite sets. Quite generally, any  
finitary variety is equivalent to  Grothendieck's Ind-completion of its full subcategory of finitely presented algebras \cite[Corollary VI.2.2]{j}.  Therefore the category of 
Boolean algebras is dually equivalent to the Pro-completion of $\Set_{f}$, which,  by a folklore theorem, can be identified with  the category $\St$ 
of compact Hausdorff zero-dimensional  spaces and the continuous maps between them. This is Stone's 
celebrated duality 
theorem for Boolean algebras \cite{st2}. The objects of $\St$ are called \emph{Stone spaces}.

\smallskip Now enlarge $\2$ to the real unit interval $\I:=\int\subseteq \R$, and regard $\I$ endowed with its Euclidean topology as an object in the category $\Top$ of topological spaces and continuous maps. Consider the full subcategory $\Td$ of $\Top$ whose 
objects are the countable Cartesian powers $\I^{\kappa}$ equipped with the product (Tychonoff) topology. With distinguished projections $\I^{\kappa}\to \I$,
the category $\Td$  is then the Lawvere-Linton algebraic theory \cite{Linton66} of some variety of algebras. That is, the category $[\Td,\Set]_{\rm cpp}$ of all functors from $\Td$ to $\Set$ that preserve countable products is a variety; but this time the 
variety, as described here, is \emph{infinitary} \cite{Slominsky:1959}, in that $\kappa$ need not be finite.  Thus, just like Boolean algebras are the models in $\Set$ of the  algebraic theory of the finite Boolean cubes $\2^{n}$, so are the algebras in $[\Td,\Set]_{\rm cpp}$ the models in $\Set$ of the infinitary algebraic theory of the metrisable Tychonoff cubes $I^{\kappa}$.

\smallskip  In a terse and  important paper \cite{Isbell} (see also \cite{Semadeni:1986}), Isbell showed that $[\Td,\Set]_{\rm cpp}$ is dually equivalent to 
$\KH$, the full subcategory of $\Top$ whose objects are compact and Hausdorff spaces. Previously, Duskin \cite[5.15.3]{Duskin1969} had shown that, 
indeed, $\KH^{\rm op}$ must be equivalent to some possibly infinitary variety of algebras. Isbell's result further shows that such a variety can be defined 
using finitely many finitary operations, along with a \emph{single} operation of countably infinite arity. 
The algebras in $[\Td,\Set]_{\rm cpp}$  freely generated by a set $S$ have $\C(I^{S},I)$, the collection of all continuous functions $I^S \to I$, as their 
 underlying sets. Isbell's operation is then the uniformly convergent series
\begin{align}\label{e:isbell}
\sum_{i=1}^{\infty}\frac{f_{i}}{2^{i}}, \quad \text{for } f_{i}\in \C(I^S,I).
\end{align}

Since $[\Td,\Set]_{\rm cpp}$ is a variety, it can be axiomatised by equations. The problem of finding a manageable axiomatisation of $[\Td,\Set]_{\rm cpp}$, if one such exists, has remained open. Our main result in this paper is a \emph{finite} equational axiomatisation of $[\Td,\Set]_{\rm cpp}$.

\begin{remark}\label{rem:subd-repr}
Any variety ${\sf V}$ dually equivalent to $\KH$ enjoys rather special properties. First, the (initial) algebra $\mathrm{F}(0)$ freely generated by the 
empty set generates the variety --- that is, each algebra in ${\sf V}$ is a homomorphic image of a subalgebra of a power of $\mathrm{F}(0)$. 
(We note in passing an interesting consequence: ${\sf V}$ has no proper non-trivial subvarieties). 
In fact, in category-theoretic terminology, $\mathrm{F}(0)$ is a cogenerator in ${\sf V}$. For readers familiar with the terminology of universal algebra, 
this entails that $\mathrm{F}(0)$ generates the variety ${\sf V}$ as a quasivariety.
An even stronger result holds.
Birkhoff proved in \cite{birkhoff1944} that each  algebra of a \emph{finitary} variety is a subdirect product of subdirectly irreducible algebras\footnote{An algebra $A$ is a \emph{subdirect product} of the family $\{A_i\}_{i\in I}$ of algebras if there is a monomorphism $\alpha\colon A\to\prod_{i\in I}A_i$ such 
that $\pi_i\circ\alpha(A)=A_i$ for all $i\in I$, where $\pi_i\colon\prod_{i\in I}A_i\to A_i$ is the $i^{\rm th}$ projection. Then $A$ is \emph{subdirectly irreducible} if, whenever it is represented as the subdirect product of a family $\{A_i\}_{i\in I}$, there is $i\in I$ such that $A$ is isomorphic to $A_i$. \label{fn:sub-direct}} in the variety. Importantly, he pointed out that this result does not  extend to infinitary varieties \cite[p.\ 765]{birkhoff1944}. 
It can be shown by means of topological arguments in $\KH$ that every algebra in ${\sf V}$ is a subdirect product of copies of $\mathrm{F}(0)$, and the latter is simple and therefore subdirectly irreducible. Hence, \emph{Birkhoff's subdirect representation theorem holds for ${\sf V}$}.
\end{remark}

It is not hard to see that $\KH^{\rm op}$ is not equivalent to a finitary variety. Let us prove a  stronger fact (Remark \ref{r:presheaf}), and compare it to results in the literature (Remark \ref{r:ban-ros}).
\begin{lemma}\label{l:knownfacts}The following hold for any compact Hausdorff space $X$.
\begin{enumerate}
\item $X$ is the continuous image of some Stone space.
\item $X$ is a Stone space if, and only if, $X$ is a cofiltered limit in $\KH$ of finite discrete spaces.
\end{enumerate}
\end{lemma}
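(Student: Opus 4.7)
For part (1), I would proceed via the Stone-\v{C}ech compactification of the underlying set. Let $X_d$ denote $X$ equipped with the discrete topology. Then $\beta X_d$ is a Stone space, since it can be identified with the Stone dual of the power-set Boolean algebra of $X_d$ (equivalently, it is extremally disconnected and compact Hausdorff). The identity $X_d \to X$ is trivially continuous, so by the universal property of $\beta$ it extends to a continuous map $e\colon \beta X_d \to X$. As $e$ restricts to the identity on $X_d \subseteq \beta X_d$, it is surjective, exhibiting $X$ as a continuous image of a Stone space.

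For part (2), the direction $(\Leftarrow)$ is direct. A cofiltered limit in $\KH$ (which agrees with the corresponding limit in $\Top$) embeds as a closed subspace of the product of the diagram; a product of finite discrete spaces is compact Hausdorff and zero-dimensional, and closed subspaces of such are again so, yielding a Stone space.

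For $(\Rightarrow)$, the approach is via Stone duality, which is exactly the folklore statement alluded to earlier in the introduction. Given a Stone space $X$, the Boolean algebra $B$ of clopens of $X$ is the filtered colimit of its finite Boolean subalgebras; each such finite subalgebra is generated by a finite clopen partition $\mathcal{P}$ of $X$, and its Stone dual is the corresponding finite discrete quotient $X/{\sim_\mathcal{P}}$. Dualising this colimit presentation of $B$ displays $X$ as the cofiltered limit in $\KH$ of these finite discrete spaces.

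The only step of substance is showing that the cone $(X \to X/{\sim_\mathcal{P}})_\mathcal{P}$ is genuinely a limit cone, which amounts to verifying that the clopens of $X$ jointly separate points and that the induced map $X \to \varprojlim X/{\sim_\mathcal{P}}$ is surjective. Joint separation is automatic from $X$ being Hausdorff and zero-dimensional, while surjectivity follows by a standard compactness argument (a compatible thread of blocks yields a decreasing net of nonempty closed sets, whose intersection is nonempty). This point-set verification is the main obstacle; invoking Stone's duality theorem, already recalled in the introduction, makes the argument essentially formal and is the route I would take.
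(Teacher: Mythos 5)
Your proof is correct, but for item (1) it follows a genuinely different route from the paper. The paper obtains the covering Stone space from Gleason's Theorem: $X$ admits a projective cover in $\KH$, which is extremally disconnected and in particular a Stone space mapping onto $X$. You instead take $\beta X_d$, the Stone--\v{C}ech compactification of the discretisation of $X$ (equivalently, the Stone dual of the power-set algebra of $X$), and extend the identity $X_d\to X$ to a continuous surjection $\beta X_d\to X$; this is a standard and somewhat more elementary argument, since it needs only the universal property of $\beta$ (or Stone duality for power-set algebras) rather than the existence of projective covers, at the cost of producing a vastly larger covering space and losing the minimality/projectivity information that Gleason's construction provides --- information which is irrelevant for the lemma as stated. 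For item (2) the paper simply cites the folklore fact, and your argument (clopen partitions ordered by refinement, dualising the presentation of the clopen algebra as a filtered colimit of its finite subalgebras, with point separation giving injectivity, a compactness argument giving surjectivity onto the limit, and the compact-to-Hausdorff criterion upgrading the continuous bijection to a homeomorphism, plus the observation that a cofiltered limit of finite discrete spaces is a closed subspace of a zero-dimensional compact Hausdorff product) is exactly the standard proof of that folklore statement, so there is nothing to compare there.
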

\begin{proof}
Item 1 is a consequence, for example,  of {\it Gleason's Theorem}: $X$ has a projective topological cover in $\KH$, which is in fact extremally disconnected (=the closure of each open set is again open). See \cite[3.2]{Gleason58}.
Item 2 is folklore,  see e.g.\ \cite[p.\ 236]{j}.
\end{proof}
Recall that an object $A$ of a category\footnote{All categories are assumed to be locally small.} ${\sf C}$ is 
(\emph{Gabriel-Ulmer}) \emph{finitely presentable} if the covariant $\hom$-functor $\hom_{\sf C}{(A,-)}\colon{\sf C}\to\Set$ preserves filtered 
colimits \cite[Definition 6.1]{GabrielUlmer71}, \cite[Definition 1.1]{adameketal94}. Further recall that  ${\sf C}$  is \emph{finitely accessible} provided that it has filtered 
colimits, and that there exists a set $S$ of its objects such that (i) each object of $S$ is finitely presentable, and (ii) each object of {\sf C} 
is a filtered colimit of objects in $S$. See \cite{MakkaiPare89}, \cite[Definition 2.1]{adameketal94}.

\begin{proposition}\label{fin-acc}
Assume that ${\sf F}$ is a full subcategory of $\KH$ extending $\St$. If ${\sf F}$ is dually equivalent to a finitely accessible category --- in 
particular, if ${\sf F}^{\rm op}$ is  a finitary variety --- then ${\sf F}=\St$.
\end{proposition}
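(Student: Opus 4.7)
The plan is to characterise the finitely copresentable objects of ${\sf F}$ --- those objects whose image in ${\sf F}^{\rm op}$ is finitely presentable in the sense of Gabriel-Ulmer --- as being precisely the finite discrete spaces, and then to conclude via Lemma \ref{l:knownfacts}(2) that every object of ${\sf F}$ is a Stone space. Since any finitary variety is finitely accessible, it suffices to handle the finitely accessible case, which guarantees that ${\sf F}$ admits cofiltered limits and that every object is a cofiltered limit of finitely copresentable ones.

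First I would show that a finitely copresentable $Y \in {\sf F}$ must be finite and discrete. By Lemma \ref{l:knownfacts}(1), choose a continuous surjection $p\colon Z \twoheadrightarrow Y$ with $Z$ a Stone space, and by Lemma \ref{l:knownfacts}(2) write $Z = \lim_j F_j$ as a cofiltered limit in $\KH$ of finite discrete spaces $F_j$. Since $\St \subseteq {\sf F}$ contains the vertex $Z$, fullness of the inclusion ${\sf F} \hookrightarrow \KH$ ensures that this is also a cofiltered limit in ${\sf F}$. The finite copresentability of $Y$ then yields a factorisation $p = q \circ \pi_j$ for some $j$ and some morphism $q\colon F_j \to Y$; surjectivity of $p$ forces surjectivity of $q$, so $Y$ is a continuous image of a finite discrete space, and being Hausdorff it is itself finite and discrete. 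With this identification in hand, I would take an arbitrary $X \in {\sf F}$, express it in ${\sf F}$ as a cofiltered limit $X = \lim_i X_i$ of finite discrete spaces, and note that the cofiltered limit of the same diagram in $\KH$ is a Stone space by Lemma \ref{l:knownfacts}(2); since this Stone space lies in $\St \subseteq {\sf F}$, fullness again identifies it with $X$, so $X \in \St$. Hence ${\sf F} \subseteq \St$, and the reverse inclusion is given.

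The main obstacle to watch for is the comparison of cofiltered limits computed in ${\sf F}$ versus those computed in $\KH$, which for a general full subcategory need not agree. In our situation this causes no trouble because every limit we need to compare has its $\KH$-value already inside $\St \subseteq {\sf F}$, so fullness of the inclusion resolves the ambiguity; the whole argument is essentially a leveraging of Gleason's theorem (Lemma \ref{l:knownfacts}(1)) against the Stone-space description of cofiltered limits of finite discrete spaces (Lemma \ref{l:knownfacts}(2)).
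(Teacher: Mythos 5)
Your proposal is correct and follows essentially the same route as the paper: Gleason's theorem (Lemma \ref{l:knownfacts}(1)) plus the cofiltered-limit description of Stone spaces (Lemma \ref{l:knownfacts}(2)) to show that finitely copresentable objects of ${\sf F}$ are finite, the factorisation of the surjection through a finite stage via finite copresentability, and then the observation that cofiltered limits of finite discrete spaces in ${\sf F}$ agree with those in $\KH$ (justified, as in the paper, by the full inclusion reflecting limits). The only cosmetic difference is that you announce a characterisation of the finitely copresentable objects as \emph{precisely} the finite discrete spaces, though you only prove (and only need) the one implication.
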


\begin{proof} It suffices to show that each object of ${\sf F}$ is a Stone space, for then $\St={\sf F}$.
Since  ${\sf F}^{\rm op}$ is finitely accessible, every object in ${\sf F}$ is the cofiltered limit of finitely copresentable objects. We \emph{claim} that every 
finitely copresentable space $F$ in ${\sf F}$ is finite. By Lemma \ref{l:knownfacts}.(1) there exists a continuous 
surjection $\gamma \colon  G \to F$, with $G$ a Stone space. By Lemma \ref{l:knownfacts}.(2), $G$  is the cofiltered limit in $\KH$ of finite discrete spaces 
$\{G_i\}_{i\in I}$; write $\alpha_i\colon G \to G_i$ for the limit arrows. Note that, since $G$ lies in ${\sf F}$ and the full embedding ${\sf F}\to\KH$ reflects limits, $G$  is in fact the cofiltered limit  of  $\{G_i\}_{i\in I}$ in ${\sf F}$.
\[ \begin{tikzcd}
F  & & G \arrow{ll}{\gamma} \arrow[bend left=30,yshift=0ex,xshift=0ex]{dd}[swap]{\alpha_i} \arrow[bend left=50,yshift=0ex,xshift=0ex]{ddd}{\alpha_j} \\
& &  \\
& & G_i \arrow{d}{g_{ij}} \\
& & G_j \arrow{uuull}{\phi}
\end{tikzcd}\]
Since $F$ is finitely copresentable in {\sf F}, there exists $j \in I$ together with an {\sf F}-arrow $\phi\colon G_j\to F$  such that $\gamma=\phi\circ\alpha_j$. But $\gamma$ is surjective, and therefore so is $\phi$. Hence $F$ is finite, and our \emph{claim} is settled. Since cofiltered 
limits of finite discrete spaces in ${\sf F}$ are computed as in $\KH$,  each object of ${\sf F}$ is a Stone space by Lemma \ref{l:knownfacts}.(2), as was to be shown. Finally, it is a standard fact that finitary varieties of algebras are finitely accessible, see e.g.\ \cite[Corollary 3.7]{adameketal94}.
\end{proof}
\begin{remark}\label{r:presheaf}In this remark  we assume  familiarity with the terminology of topos theory and categorical logic.
It is well known that finitely accessible categories are precisely the categories of models in $\Set$ of geometric theories of presheaf type. (The 
morphisms are  the homomorphisms, i.e., they  preserve operations and relations.) See for example \cite[0.1]{Beke04}.
Thus, Proposition \ref{fin-acc} entails the corollary: \emph{The category $\KH^{op}$ is not axiomatisable by a geometric theory of presheaf type.}
\end{remark}
\begin{remark}\label{r:ban-ros}A class of finitary algebras is \emph{elementary} if it is the category of models in $\Set$ of a first-order theory, and it is a \emph{{\rm P}-class} if it is closed under arbitrary Cartesian products. Bankston conjectured that 
no elementary {\rm P}-class of algebras is equivalent to $\KH^{\rm op}$, and proved a partial result towards settling the conjecture \cite[Theorem 0.2.(iv)]{Bankston82}. In 1983, Banaschewski and Rosick\'y independently settled Bankston's conjecture,\footnote{We learned 
from Ji\v{r}\'i Rosick\'y (private e-mail communication) that Rosick\'y presented his solution at the Category Theory conference
in Oberwolfach in September 1983, and that after his talk, Banaschewski 
informed the audience of his own solution. \label{fn:bank}} see \cite{Banaschewski84}, {\cite[Theorem 3]{Rosicky89}}. In particular, Banaschewski proved the following result. 
\emph{Assume that ${\sf F}$ is a full subcategory of $\KH$ extending $\St$. If ${\sf F}$ is dually equivalent to an elementary {\rm P}-class of algebras --- 
in particular, if ${\sf F}^{\rm op}$ is  a finitary variety --- then ${\sf F}=\St$.} 
Our Proposition \ref{fin-acc}  and  Banaschewski's result are formally incomparable: neither entails the other. It seems clear, however, that the solution 
to Bankston's conjecture is deeper than Proposition \ref{fin-acc}. To the best of our knowledge it is an open problem whether $\KH^{\rm op}$ is 
axiomatisable by a first-order theory.\end{remark}

\smallskip Our key tool to provide an axiomatisation of $[\Td,\Set]_{\rm cpp}$ is the theory of \emph{MV-algebras}. 
These structures are to \emph{{\L}ukasiewicz's $\int$-valued propositional logic}  as 
Boolean algebras are to classical two-valued logic, cf.\ \cite[Chapter 4]{cdm2000}.
 Chang  \cite{Chang58} first carried out the algebraisation of {\L}ukasiewicz logic, and called the resulting structures MV-algebras 
``for want of a better name'' \cite[p.\ 467]{Chang58}. In \cite{chang2}, using MV-algebras, Chang obtained an 
algebraic proof of the completeness theorem for {\L}ukasiewicz logic. The standard reference for the basic theory of MV-algebras is 
\cite{cdm2000}, whereas \cite{Mundici11} deals with advanced topics. 

This paper is organised as follows. In Section \ref{s:pre} we provide the needed background.
We explain the connection between MV-algebras and lattice-ordered groups through Mundici's functor $\Gamma$ \cite[Theorem 3.9]{Mundici86}, which we later exploit in Section \ref{s:SW} to formulate an MV-algebraic version of the Stone-Weierstrass Theorem.
The problem of axiomatising $\KH^{\rm op}$ first arose in the context of Gelfand-Naimark  duality between $\KH$, and the category of 
complex commutative unital $\mathrm{C}^*$-algebras  \cite[Lemma 1]{GelfandNeumark43}.
We explain the relationship between $\mathrm{C}^*$-algebras and MV-algebras at the end of Section \ref{s:pre}.

In \cite[Section 4]{Cignolietal2004} the authors construct an adjunction between the category $\MV$ of MV-algebras
and $\KH^{\rm op}$, and show that the full subcategory of $\MV$ whose objects are, up to isomorphism, the MV-algebras of all $\int$-valued continuous functions on some compact Hausdorff space is dually equivalent to $\KH$. We summarise their result in 
Section \ref{s:reflection}, indicating how the adjunction fits into (and may be recovered from) the general theory of concrete dual adjunctions as presented in e.g.\ \cite{PT91}. We take the time to stress how the classical approach through maximal ideals and the general approach through dualising objects are related to each other via \emph{H\"{o}lder's Theorem} (Lemma \ref{MVHolder}), a landmark result in the theory of ordered groups. 

In Section \ref{s:delta} we introduce \emph{$\delta$-algebras} 
by expanding the language of MV-algebras with the infinitary operation $\delta$, and by adding finitely many equational axioms to those of MV-algebras. Thus, by definition each $\delta$-algebra has an 
underlying MV-algebra. The operation 
$\delta$, as axiomatised in Definition \ref{d:delta}, is intended to model Isbell's series \eqref{e:isbell}.  Throughout, we write $\Va$ for the category of 
$\delta$-algebras and their homomorphisms. The bulk of Section \ref{s:delta} consists of elementary facts about $\delta$-algebras that are needed for 
the proof of our main result.

In Section \ref{s:semisimple} we prove that any $\delta$-algebra must be semisimple as an MV-algebra (Theorem \ref{semisem}), i.e.\ its radical ideal of ``infinitesimal elements" must be trivial. Using this, in Section \ref{s:etadelta} we show that the forgetful functor $\Va\to\MV$ is full.
In Section \ref{s:SW} we provide the MV-algebraic version of the Stone-Weierstrass Theorem (Lemma \ref{l:mvsw}) by translating the classical result to MV-algebras via Mundici's functor $\Gamma$. In turn, Lemma \ref{l:mvsw} leads us to a Stone-Weierstrass Theorem for $\delta$-algebras (Theorem \ref{t:SW-Delta}). Finally, Section \ref{s:main} assembles these facts together to obtain our main result, Theorem \ref{thm:main}, that $\Va$ is dually equivalent to $\KH$. 

\section{MV-algebras: preliminaries and background.}\label{s:pre}
\begin{notation}\label{n:basic}For the entire paper we set $\N:=\{1,2,\ldots\}$. As usual, $\R$ denotes the set of real numbers,  and $\int\subseteq \R$ the real unit interval.
\end{notation}

An \emph{MV-algebra} is an algebraic structure $(A,\oplus,\neg,0)$, where $0\in A$
is a constant, $\neg$ is a unary operation satisfying $\neg\neg x=x$, $\oplus$ is a binary operation making $(A,\oplus,0)$
a commutative monoid, the element $1$ defined as $\neg 0$ satisfies $x\oplus1=1$, and the law
\begin{align}\label{mvlaw}
 \neg(\neg x \oplus y)\oplus y = \neg(\neg y \oplus x)\oplus x
\end{align}
holds. The operation $\oplus$ is sometimes called \emph{truncated addition}. Any MV-algebra has an underlying structure of
distributive lattice bounded below by $0$ and above by $1$. Joins are defined as $x \vee y := \neg(\neg x \oplus  y)\oplus y$.
Thus, the characteristic law \eqref{mvlaw} states
that $x\vee y=y\vee x$.
Meets are defined by the De Morgan condition  $x \wedge y := \neg (\neg x \vee \neg y)$.
The De Morgan dual of $\oplus$, on the other hand, is the operation traditionally denoted
\[
x \odot y := \neg (\neg x \oplus \neg y)
\]
which, like $\oplus$, is not idempotent. The further derived connective
\[
x\ominus y := x \odot \neg y
\] 
is known as \emph{truncated subtraction}.
Boolean algebras are precisely
those MV-algebras that are idempotent, meaning that $x\oplus x = x$ holds; equivalently, that  $x\odot x = x$ holds; equivalently, that  the {\it tertium non datur} law $x\vee\neg x=1$ holds. Throughout, we write $\MV$ to denote the category whose objects are MV-algebras, and whose arrows are the MV-algebra homomorphisms, i.e.\ the functions preserving $\oplus$, $\neg$, and $0$.

 The real unit interval $\int$ can be made into an MV-algebra with neutral element $0$ by defining 
\begin{align}\label{eq:plus}
x\oplus y := \min{\{x+y,1\}},
\end{align}
and 
\begin{align}\label{eq:not}
\neg x:=1-x.
\end{align} 
The underlying lattice order of this MV-algebra coincides with
the order that $\int$ inherits from the real numbers. When we 
refer to  $\int$ as an MV-algebra, we always mean the  structure just described. \emph{Chang's completeness theorem}  states that $\int$ generates the variety of MV-algebras \cite[2.5.3]{cdm2000}. Explicitly, this means that any MV-algebra is a homomorphic image of a subalgebra of a power of 
$\int$.

\begin{example}\label{ex:c(x)mv}The standard example of MV-algebra, the interval
$\int$  equipped with the constant $0$ and the operations (\ref{eq:plus}--\ref{eq:not}), generalises as follows. If $X$ is any set, the collection $\int^{X}$ of all functions from $X$ to $\int$ inherits the structure of an MV-algebra upon defining operations pointwise. If, additionally, $X$ is a topological space, since $\oplus\colon \int^{2}\to\int$, $\neg\colon\int\to\int$, and $0$ are continuous with respect to the Euclidean topology of $\int$, the subset 
\begin{align}\label{eq:c(x)}
\C(X):=\{f\colon X\to\int\mid f \,\text{ is continuous}\}
\end{align}
is a subalgebra of the MV-algebra $\int^{X}$. The MV-algebra \eqref{eq:c(x)} plays a crucial r\^{o}le in this paper.\footnote{To avoid misunderstandings, let us point out that $\C(X)$ is often used in the $\mathrm{C}^*$-algebraic literature to denote the $\mathrm{C}^*$-algebra of complex-valued continuous functions on the space $X$, and in the literature on rings of continuous functions to denote the ring of real-valued continuous functions on the space $X$. By contrast, we use $\C(X)$ as in \eqref{eq:c(x)} for $\int$-valued continuous functions, $\C(X,\R)$ for real-valued continuous functions, and $\C(X,\CC)$ for complex-valued continuous functions.}
\end{example}

\subsection{Elementary lemmas.}\label{ss:elem}
We collect here the elementary facts about MV-algebras that we need in the sequel. All proofs may be found in \cite{cdm2000} or \cite{Mundici11}, as specified below.
\begin{lemma}[Underlying lattice {\cite[1.1.2, 1.1.5]{cdm2000}}] \label{mv1} 
Let $A$ be an MV-algebra, and  let $x,y\in A$. The following are equivalent.
\begin{enumerate}
\item $\neg x\oplus y=1$.
\item $x\ominus  y=0$.
\item $y=x\oplus (y\ominus x)$.
\item There exists $z\in A$ such that $y=x\oplus z$.
\end{enumerate}
Upon defining  $x\leq y$ if $x,y\in A$ satisfy one of the equivalent conditions above, $(A,\leq)$ is a  lattice whose join and meet operations are given by $x \vee y := \neg(\neg x \oplus  y)\oplus y$ and $x \wedge y := \neg (\neg x \vee \neg y)$, respectively. Moreover, the lattice $(A,\leq)$ is bounded below by $0$ and above by $1$.\qed
\end{lemma}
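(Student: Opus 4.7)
The plan is to unwind the equivalence of the four conditions by direct manipulation of the MV-algebra axioms, then verify that the resulting relation is a bounded lattice order with the stated joins and meets. First I would observe the key identity
\[
x \ominus y \;=\; x \odot \neg y \;=\; \neg(\neg x \oplus \neg\neg y) \;=\; \neg(\neg x \oplus y),
\]
which together with involutivity of $\neg$ immediately gives the equivalence of (1) and (2). The substitution $y\mapsto 1$ in the characteristic law \eqref{mvlaw} yields $\neg x \oplus x = 1$ (the non-trivial input needed later), since
\[
\neg(\neg x \oplus 1)\oplus 1 \;=\; \neg(\neg 1 \oplus x)\oplus x
\]
collapses to $1 = \neg x \oplus x$ via $\neg 1 = 0$ and $0 \oplus u = u$.

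Next I would identify the common value $x \oplus (y \ominus x)$ with the defined join $x\vee y$. Using the identity above,
\[
x \oplus (y \ominus x) \;=\; x \oplus \neg(\neg y \oplus x) \;=\; \neg(\neg y \oplus x)\oplus x,
\]
and \eqref{mvlaw} rewrites this as $\neg(\neg x \oplus y)\oplus y = x\vee y$. Hence (3) says exactly $y = x \vee y$. To close the cycle: (1) $\Rightarrow$ (3) since $\neg x\oplus y = 1$ forces $\neg(\neg x\oplus y) = 0$ and then $x\vee y = 0 \oplus y = y$; (3) $\Rightarrow$ (4) is trivial with $z := y \ominus x$; and (4) $\Rightarrow$ (1) because $\neg x \oplus y = \neg x \oplus x \oplus z = 1 \oplus z = 1$ by the identity $\neg x \oplus x = 1$ and the absorptive axiom $u\oplus 1 = 1$.

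For the lattice claim, reflexivity of $\leq$ is just $\neg x \oplus x = 1$; antisymmetry follows because $x\leq y$ and $y\leq x$ give $x\vee y = y$ and $y\vee x = x$, while \eqref{mvlaw} forces $x\vee y = y\vee x$. Transitivity and the fact that the operation $\vee$ above is indeed the least upper bound (and dually for $\wedge$ via De Morgan) are routine but slightly tedious identity-chases: I would verify $x, y \leq x\vee y$ by computing $\neg x\oplus(x\vee y)$ and $\neg y\oplus(x\vee y)$ and reducing to $1$, and the universal property by assuming $x,y\leq w$ and manipulating $(x\vee y)\ominus w$ to $0$. Boundedness is immediate since $0\oplus x = x$ gives $0\leq x$ via (4), and $x \oplus 1 = 1$ gives $\neg x\oplus 1 = 1$, i.e.\ $x\leq 1$.

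The one step I would expect to require the most care is the verification that the relation is transitive and that the proposed $\vee$ really is the supremum: these arguments do not follow from a single substitution but require combining \eqref{mvlaw} with the associativity and commutativity of $\oplus$ in a coordinated way. Everything else is essentially bookkeeping on top of the characteristic law and the involutivity of $\neg$.
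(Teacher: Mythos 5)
Your argument is essentially a correct reconstruction of the standard textbook proof; note that the paper itself gives no proof of this lemma at all --- it is background material imported with a citation to \cite[1.1.2, 1.1.5]{cdm2000}, which is why it carries only a \emph{qed} mark. The part you prove in full, the equivalence of (1)--(4), is exactly right: the identity $x\ominus y=\neg(\neg x\oplus y)$, the derivation of $\neg x\oplus x=1$ by putting $y:=1$ in \eqref{mvlaw}, and the identification $x\oplus(y\ominus x)=\neg(\neg x\oplus y)\oplus y=x\vee y$ are the standard steps. Two remarks on the portion you leave as a sketch. First, transitivity is not one of the delicate points: it is immediate from condition (4) and associativity (if $y=x\oplus u$ and $z=y\oplus v$, then $z=x\oplus(u\oplus v)$). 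Second, the genuinely nontrivial step is the least-upper-bound property, and there ``manipulating $(x\vee y)\ominus w$ to $0$'' is not a one-line computation: done head-on it amounts to proving distributivity of $\odot$ over $\vee$. The cleaner route, and the one in the cited source, is to first record two consequences of the equivalences you have already established (so there is no circularity): $\neg$ reverses the order, since $x\le w$ iff $\neg x\oplus w=1$ iff $\neg w\le\neg x$, and $\oplus$ is monotone, by condition (4) plus associativity --- this is precisely the content of the paper's Lemma \ref{mv2}. Then $x\le w$ gives $\neg(\neg x\oplus y)\oplus y\le\neg(\neg w\oplus y)\oplus y$, i.e.\ $x\vee y\le w\vee y$, while $y\le w$ gives $w\vee y=y\vee w=\neg(\neg y\oplus w)\oplus w=0\oplus w=w$ using \eqref{mvlaw}; hence $x\vee y\le w$. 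With that ingredient made explicit, the rest of your outline (both elements are below $x\vee y$ by condition (4), meets via the De Morgan dual and order reversal, boundedness by $x=0\oplus x$ and $\neg x\oplus 1=1$) goes through exactly as you state.
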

\begin{lemma}[Interplay of algebra and order {\cite[1.1.4]{cdm2000}}]\label{mv2} 
Let $A$ be an MV-algebra, and  let $x,y, w,z\in A$. If $x\leq y$ and $w\leq z$, then $x\odot w\leq y\odot z$ and  $x\oplus w\leq y\oplus z$.
\qed \end{lemma}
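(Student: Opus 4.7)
The plan is to deduce both monotonicity statements directly from the characterisation of the order in Lemma \ref{mv1}, using only the commutative-monoid laws for $\oplus$ together with involutivity of $\neg$.

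First I would tackle monotonicity of $\oplus$. By clause (4) of Lemma \ref{mv1}, the hypotheses $x\leq y$ and $w\leq z$ yield elements $u,v\in A$ with $y=x\oplus u$ and $z=w\oplus v$. Commutativity and associativity of $\oplus$ then give
\[
y\oplus z \;=\; (x\oplus u)\oplus(w\oplus v) \;=\; (x\oplus w)\oplus(u\oplus v),
\]
so $y\oplus z$ is of the form $(x\oplus w)\oplus t$ for some $t\in A$, and another application of clause (4) of Lemma \ref{mv1} yields $x\oplus w\leq y\oplus z$.

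Next I would record the antitonicity of negation: $x\leq y$ iff $\neg x\oplus y=1$ (by clause (1) of Lemma \ref{mv1}), which by $\neg\neg y=y$ is the same as $\neg(\neg\neg y)\oplus\neg(\neg x)\oplus \cdots$; more directly, $\neg x\oplus y=1$ says $\neg(\neg y)\oplus \neg(\neg x)\cdot\ldots$—let me phrase it cleanly: since $\oplus$ is commutative, $\neg x\oplus y=1$ is the same as $\neg(\neg y)\oplus\neg x=1$, i.e.\ $\neg y\leq \neg x$. Hence $\neg$ is order-reversing.

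For monotonicity of $\odot$, I would combine the two previous observations with the definition $a\odot b:=\neg(\neg a\oplus \neg b)$. From $x\leq y$ and $w\leq z$, antitonicity of $\neg$ gives $\neg y\leq\neg x$ and $\neg z\leq\neg w$; the $\oplus$-case just proved gives $\neg y\oplus\neg z\leq\neg x\oplus\neg w$; and a final application of antitonicity of $\neg$ produces $\neg(\neg x\oplus\neg w)\leq\neg(\neg y\oplus\neg z)$, which is exactly $x\odot w\leq y\odot z$. Nothing in this argument is genuinely difficult---the only mildly delicate point is making sure the chain of equivalences at the level of the defining equation $\neg a\oplus b=1$ is handled symmetrically, so that antitonicity of $\neg$ is established before it is used.
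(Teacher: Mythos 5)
Your proof is correct. Note that the paper itself gives no argument for this lemma: it cites \cite[1.1.4]{cdm2000}, which (as the remark following the lemma points out) only treats the case $w=z$, and declares the two-variable generalisation straightforward. Your proof supplies exactly that routine argument --- monotonicity of $\oplus$ from clause (4) of Lemma \ref{mv1}, antitonicity of $\neg$ from clause (1) plus commutativity and $\neg\neg y=y$, and monotonicity of $\odot$ by De Morgan dualisation --- and all three steps check out. Two small remarks: in a final write-up, delete the garbled half-sentence in your second paragraph and keep only the clean chain $x\leq y \Leftrightarrow \neg x\oplus y=1 \Leftrightarrow \neg(\neg y)\oplus\neg x=1 \Leftrightarrow \neg y\leq\neg x$; and an equally short alternative route, closer to how the cited one-variable statement is usually upgraded, is to apply one-sided monotonicity twice, $x\oplus w\leq y\oplus w\leq y\oplus z$ and likewise for $\odot$, though your direct argument is just as good.
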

\begin{remark} The reference \cite[1.1.4]{cdm2000} only proves Lemma \ref{mv2} for $w=z$. The generalisation stated above is straightforward.
\end{remark}
Chang's \emph{distance function} on the MV-algebra $A$ is the function $\d\colon A\times A\to A$ defined by \[\d(x,y):=(x\ominus y)\oplus (y\ominus x)\] for every $x,y\in A$. In the case of the  MV-algebra of continuous functions \eqref{eq:c(x)}, $\d(f,g)$ is the function $|f-g|$. In particular, in the MV-algebra $\int$, the 
Chang's distance  is the usual Euclidean distance.

\begin{lemma}[Chang's distance {\cite[1.2.5]{cdm2000}}] \label{mv3} 
If $A$ is an MV-algebra, and $x,y\in A$ satisfy $x\leq y$, then $y=x\oplus \d(x,y)$.
\qed \end{lemma}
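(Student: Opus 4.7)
The plan is to reduce the statement directly to Lemma \ref{mv1} by exploiting the hypothesis $x \le y$ to collapse one half of Chang's distance.

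First I would invoke Lemma \ref{mv1}, in particular the equivalence of conditions (1) and (2), which tells us that $x \le y$ implies $x \ominus y = 0$. Plugging this into the defining formula for Chang's distance gives
\[
\d(x,y) \;=\; (x \ominus y) \oplus (y \ominus x) \;=\; 0 \oplus (y \ominus x) \;=\; y \ominus x,
\]
where the last step uses that $0$ is the neutral element of the commutative monoid $(A, \oplus, 0)$.

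It then remains to establish $y = x \oplus (y \ominus x)$, and this is precisely the content of condition (3) of Lemma \ref{mv1}, which is equivalent to $x \le y$ by definition of the order. Combining this with the identity $\d(x,y) = y \ominus x$ proved above yields $y = x \oplus \d(x,y)$, as required.

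I do not anticipate any real obstacle: the lemma is essentially a repackaging of the characterisation of the MV-algebra order given in Lemma \ref{mv1}, plus the trivial simplification of $\d(x,y)$ when one of the truncated subtractions vanishes.
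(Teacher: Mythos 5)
Your proof is correct and complete: from $x\leq y$, conditions (2) and (3) of Lemma \ref{mv1} give $x\ominus y=0$ and $y=x\oplus(y\ominus x)$, so $\d(x,y)=0\oplus(y\ominus x)=y\ominus x$ and the claim follows. The paper itself offers no proof, citing \cite[1.2.5]{cdm2000} instead, and your argument is exactly the standard one found there, so there is nothing to add.
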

The monoidal operation $\oplus$ is not cancellative. However:
\begin{lemma}[MV-cancellation law {\cite[p.\ 106]{Mundici11}}]\label{canc} 
For any elements $x,y,z$ in an MV-algebra, if $x\oplus z=y\oplus z$ and $x\odot z=0=y\odot z$, then $x=y$. In particular, if $x\odot y=0$, then $y=x\oplus y$ implies $x=0$.\qed
\end{lemma}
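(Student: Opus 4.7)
The plan is to reduce the conditional cancellation law to a single unconditional identity, verify that identity in the generating MV-algebra $\int$, and then invoke Chang's completeness theorem (recalled just above) to lift it to every MV-algebra. The conceptual pivot is that the hypothesis $x \odot z = 0$ is equivalent to $x \leq \neg z$, hence to the purely equational statement $x = x \wedge \neg z$; this lets the side condition be absorbed into a term.

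Concretely, I would first record, via Lemma \ref{mv1}, that $x \odot z = 0 \iff \neg x \oplus \neg z = 1 \iff x \leq \neg z$, and symmetrically for $y$. I would then aim to establish the unconditional identity
\[
((w \wedge \neg z) \oplus z) \ominus z \;=\; w \wedge \neg z,
\]
which, by Chang's completeness, reduces to checking it in $\int$. A case split on whether $w \leq 1 - z$ handles this in two lines: in the first case $w \wedge \neg z = w$ and $w + z \leq 1$, so both sides evaluate to $w$; in the second case $w \wedge \neg z = 1-z$ and $(1-z) \oplus z = 1$, so both sides evaluate to $1-z$.

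With the identity in hand the main claim is immediate. Under the hypotheses $x \odot z = 0 = y \odot z$ one has $x \wedge \neg z = x$ and $y \wedge \neg z = y$, so substituting $w := x$ and $w := y$ yields
\[
x \;=\; (x \oplus z) \ominus z \;=\; (y \oplus z) \ominus z \;=\; y,
\]
the middle equality using the assumption $x \oplus z = y \oplus z$. For the final clause, take $z := y$: the equation $y = x \oplus y$ becomes $x \oplus y = 0 \oplus y$, and since $x \odot y = 0$ by assumption while $0 \odot y = 0$ holds trivially, the cancellation law just established forces $x = 0$.

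The main obstacle is recognising that the conditional hypothesis can be recast as an equation so that Chang's completeness applies; once that is seen, verifying the underlying identity in $\int$ is routine. A purely syntactic alternative would derive $(u \oplus z) \ominus z \leq u$ from the MV-axioms using monotonicity (Lemma \ref{mv2}) and then upgrade the inequality to equality under $u \leq \neg z$ via Lemma \ref{mv3}; I expect this variant to be longer and more error-prone than the route through Chang's theorem.
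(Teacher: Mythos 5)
Your argument is correct, but there is nothing in the paper to compare it against: Lemma \ref{canc} is not proved there at all, it is simply quoted from \cite[p.\ 106]{Mundici11} with a bare \qed. Taken on its own merits, your proof works. The equivalence $x\odot z=0\iff \neg x\oplus\neg z=1\iff x\leq\neg z$ is exactly Lemma \ref{mv1}, so the side conditions do become the equations $x=x\wedge\neg z$ and $y=y\wedge\neg z$; the identity $((w\wedge\neg z)\oplus z)\ominus z=w\wedge\neg z$ is a genuine equation between MV-terms, your two-case verification in $\int$ is right, and Chang's completeness theorem (stated in Section \ref{s:pre} of the paper, so available to you) transfers it to every MV-algebra, since equations persist under subalgebras, powers and homomorphic images. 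Specialising $w:=x$ and $w:=y$ then gives $x=(x\oplus z)\ominus z=(y\oplus z)\ominus z=y$, and the final clause follows by taking $z:=y$ and comparing $x\oplus y=y=0\oplus y$ with $x\odot y=0=0\odot y$, as you say. The trade-off worth noting is that you prove an elementary arithmetical lemma by appeal to Chang's completeness theorem, a far deeper result (whose standard proofs themselves develop a good deal of MV-arithmetic of this kind); within this paper that causes no circularity, since both facts are imported by citation, but if one cares about logical economy the purely equational route you sketch at the end --- establishing $(u\oplus z)\ominus z\leq u$ and upgrading it to equality under $u\leq\neg z$ via Lemmas \ref{mv2} and \ref{mv3} --- is the more self-contained option and is closer in spirit to the cited source.
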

Finally, we add a technical lemma for later use.
\begin{lemma}[{\cite[1.6.2]{cdm2000}}]\label{mv4} 
The following equations hold in any MV-algebra.
\begin{enumerate}
\item $x\oplus y\oplus (x\odot y)=x\oplus y$.
\item $(x\ominus y)\oplus ((x\oplus \neg y)\odot y)=x$. \qed
\end{enumerate}
\end{lemma}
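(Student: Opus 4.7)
The plan is to invoke Chang's completeness theorem, recalled just before Example~\ref{ex:c(x)mv}: the MV-algebra $\int$ generates the variety of MV-algebras, so any equation valid in $\int$ holds in every MV-algebra. It therefore suffices to verify both identities in the standard MV-algebra $\int$, where by \eqref{eq:plus}--\eqref{eq:not} and the derived definitions of $\odot$ and $\ominus$ one has $x\oplus y=\min(x+y,1)$, $x\odot y=\max(x+y-1,0)$, $x\ominus y=\max(x-y,0)$, and $\neg x=1-x$.

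For identity~(1), split on whether $x+y\leq 1$ or $x+y>1$. In the first case $x\odot y=0$, so the left-hand side collapses to $x\oplus y\oplus 0=x\oplus y$ by the monoid axioms. In the second case $x\oplus y=1$; since $1\oplus z=1$ holds in any MV-algebra (by the axiom $x\oplus 1=1$ together with commutativity), the identity holds again.

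For identity~(2), split on whether $x\geq y$ or $x<y$. If $x\geq y$, then $x\ominus y=x-y$; moreover $x+(1-y)\geq 1$, so $x\oplus\neg y=1$ and hence $(x\oplus\neg y)\odot y=1\odot y=y$. The left-hand side is then $(x-y)\oplus y=\min(x,1)=x$. If instead $x<y$, then $x\ominus y=0$ and $x+(1-y)<1$, so $x\oplus\neg y=x+1-y$; therefore $(x\oplus\neg y)\odot y=\max(x+1-y+y-1,0)=\max(x,0)=x$, and the left-hand side reduces to $0\oplus x=x$.

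No substantive obstacle is anticipated: both identities are piecewise linear and the case splits are dictated solely by the saturation thresholds of $\oplus$ at $x+y=1$ and of $\ominus$ at $x=y$. A purely syntactic derivation from the MV-axiom \eqref{mvlaw} and the monoid laws is of course possible, but considerably longer, and it is precisely this kind of bookkeeping that Chang's completeness theorem lets us bypass.
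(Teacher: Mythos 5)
Your verification in $\int$ is correct on all four cases, and the reduction step is legitimate: since Chang's completeness theorem (recalled in the paper just before Example \ref{ex:c(x)mv}) says every MV-algebra is a homomorphic image of a subalgebra of a power of $\int$, and equations are preserved under these constructions, any identity checked in the standard algebra $\int$ holds in every MV-algebra. The paper, however, does not argue this way: Lemma \ref{mv4} is simply quoted from \cite[1.6.2]{cdm2000}, where the two identities are derived syntactically from the MV-algebra axioms (essentially the monoid laws, the involution of $\neg$, and \eqref{mvlaw} via the lattice characterisations of Lemma \ref{mv1}). The trade-off is clear: your route is short and mechanical, but it leans on a deep theorem to dispatch an elementary fact, and in a from-scratch development this would be circular, since the proof of Chang's completeness theorem in \cite{cdm2000} itself rests on precisely such Chapter~1 identities; the cited syntactic derivation is self-contained and available before completeness is established. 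Within the logical order of this paper, where completeness is already on record as background, your argument is sound, and the same semantic shortcut is indeed standard practice for verifying MV-identities once completeness is granted.
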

\subsection{Ideals, the radical, and semisimplicity.}\label{ss:ideals}
A subset of an MV-algebra $A$ is an \emph{ideal} of $A$ provided that it contains $0$, is downward-closed (in the underlying lattice order of $A$), and is closed under  $\oplus$. Ideals of $A$ are precisely the kernels (=inverse images of $0$) of MV-homomorphisms with domain $A$. Moreover, ideals of $A$ are in one-one correspondence with congruence relations on $A$, see \cite[1.2.6]{cdm2000}. Specifically, given an ideal $I\subseteq A$, define the binary relation
\begin{align*}
x\equiv_{I} y \text{ if, and only if, } \d(x,y) \in I.
\end{align*}
Then $\equiv_{I}$ is a congruence on $A$, and we write $A/I$ to denote the quotient set with its natural MV-algebraic structure. Conversely, any congruence relation $\equiv$ on $A$ determines the ideal $I_{\equiv}:=\{x\in A \mid x\equiv 0\}$. 
The usual homomorphism theorems hold, cf.\ e.g.\ \cite[1.2.8]{cdm2000}. 
Those ideals of $A$ which are proper (i.e.\ not equal to $A$) and maximal with respect to  set-theoretic inclusion are called \emph{maximal ideals}. We set
\begin{align*}
\Max{A}:=\{\m\subseteq A\mid \m \text{ is a maximal ideal of } A\}.
\end{align*}
As a consequence of Zorn's Lemma, we have the important

\begin{lemma}[{\cite[1.2.15]{cdm2000}}]\label{lindenbaum}Every non-trivial MV-algebra has a maximal ideal.
\qed\end{lemma}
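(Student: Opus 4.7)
The plan is to apply Zorn's Lemma to the poset $\mathcal{P}$ of proper ideals of $A$ ordered by set-theoretic inclusion. Since $A$ is non-trivial, we have $0\ne 1$, so the singleton $\{0\}$ is a proper ideal and $\mathcal{P}$ is non-empty. The desired conclusion will follow once every chain in $\mathcal{P}$ is shown to admit an upper bound in $\mathcal{P}$.

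First I would observe the key characterisation that, for any ideal $I\subseteq A$, properness is equivalent to $1\notin I$: indeed, by Lemma \ref{mv1} every element of $A$ lies below $1$, so an ideal containing $1$ must, by downward-closure, equal $A$. Next, given a chain $\{I_\lambda\}_{\lambda\in\Lambda}$ in $\mathcal{P}$, I would verify that the union $I:=\bigcup_{\lambda\in\Lambda} I_\lambda$ is itself an ideal: it contains $0$, it is downward-closed because each $I_\lambda$ is, and it is closed under $\oplus$ since any two elements of $I$ lie in a common $I_\lambda$ by the chain property. Finally, $1\notin I_\lambda$ for each $\lambda$ (each $I_\lambda$ is proper), hence $1\notin I$, so $I\in\mathcal{P}$ and bounds the chain above.

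Zorn's Lemma then yields a maximal element of $\mathcal{P}$, which is by definition a maximal ideal of $A$. There is no serious obstacle here; the argument is entirely parallel to the standard Zorn-type existence proofs for maximal ideals in rings or distributive lattices, and the only MV-algebraic input is the trivial but essential observation that properness of an ideal is detected by the exclusion of the top element $1$.
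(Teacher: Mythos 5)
Your proof is correct and is exactly the argument the paper has in mind: the lemma is stated there as "a consequence of Zorn's Lemma" with a citation to \cite[1.2.15]{cdm2000}, and the standard Zorn argument on the poset of proper ideals, with properness detected by $1\notin I$, is precisely that proof. No issues.
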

Thus,  an MV-algebra $A$ is trivial (=its underlying set is a singleton), or equivalently, is the terminal object in $\MV$, if, and only if, $\Max{A}=\emptyset$.
\begin{notation}\label{n:iterated sum}Given an element $x$ of an MV-algebra, and given $n\in\N$, we use the shorthand 
\begin{align*}
nx:=\underbrace{x\oplus\cdots\oplus x}_{n {\text{ times}}}.
\end{align*}
\end{notation}
The \emph{radical} ideal $\rad{A}$ is  the intersection of all maximal ideals of $A$, in symbols,
\begin{align*}
\rad{A}:=\bigcap{\Max{A}}.
\end{align*} 
 A non-zero element $x\in A$ is \emph{infinitesimal} if $nx\leq\neg x$ for each $n\in\N$. The radical ideal is precisely the collection of all infinitesimal elements, along with zero:
\begin{lemma}[{\cite[3.6.4]{cdm2000}}] \label{mvrad}
For any MV-algebra $A$, $\rad{A}=\{x\in A\mid  nx\leq \neg x \; \; \mbox{for all} \; \; n\in\N\}$.\qed
\end{lemma}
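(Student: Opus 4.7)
The plan is to prove the two inclusions separately.

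For the inclusion $\{x\in A\mid nx\leq\neg x\text{ for all }n\in\N\}\subseteq\rad A$, the argument is elementary. Recall the standard characterisation: an ideal $\m$ of an MV-algebra is maximal if and only if for every $a\notin\m$ there exists some $n\in\N$ with $\neg(na)\in\m$. Given $x$ with $nx\leq\neg x$ for all $n$---equivalently, upon taking negations, $x\leq\neg(nx)$ for all $n$---suppose for contradiction that $x\notin\m$ for some maximal $\m$. Then some $\neg(nx)\in\m$, and the downward closure of $\m$ together with $x\leq\neg(nx)$ forces $x\in\m$, a contradiction. Hence $x$ lies in every maximal ideal, i.e.\ $x\in\rad A$.

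For the reverse inclusion $\rad A\subseteq\{x\in A\mid nx\leq\neg x\text{ for all }n\in\N\}$, I would pass to the associated unital Abelian $\ell$-group $(G,u)$ with $A\cong\Gamma(G,u)$ via Mundici's functor $\Gamma$ from Section \ref{s:pre}. Under $\Gamma$, maximal MV-ideals of $A$ correspond bijectively to maximal $\ell$-ideals of $G$ (those not containing $u$), and an elementary unpacking of the MV-operations shows that the inequality $nx\leq\neg x$ in $A$ (for $x\in[0,u]$) is equivalent to the $\ell$-group inequality $(n+1)x\leq u$. The lemma thereby reduces to the classical statement that in any unital Abelian $\ell$-group $(G,u)$ the intersection of all maximal $\ell$-ideals coincides with the set of infinitesimals $\{x\geq 0\mid nx\leq u\text{ for all }n\in\N\}$. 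This is a consequence of H\"older's Theorem: each quotient $G/\mathfrak{M}$ modulo a maximal $\ell$-ideal is a totally ordered Archimedean group, which embeds into $(\R,1)$; since the only nonnegative infinitesimal in $\R$ is $0$, every infinitesimal of $G$ lies in every $\mathfrak{M}$, while any non-infinitesimal can be separated from $0$ in some such quotient.

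The main obstacle is exactly this second inclusion. A purely MV-algebraic reduction modulo each maximal ideal of $A$ shows only that $(nx)\ominus\neg x\in\rad A$, not that it equals $0$; indeed, the semisimple quotient $A/\rad A$ does not see elements of $\rad A$ at all. The Archimedean property furnished by H\"older's Theorem is precisely what is needed to upgrade ``lies in the radical'' to an actual identity in $A$, and this is the reason the proof naturally routes through the $\ell$-group world.
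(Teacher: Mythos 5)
Your first inclusion is correct: it uses only the standard characterisation of maximal ideals (if $\m$ is maximal and $a\notin\m$, then $\neg(na)\in\m$ for some $n\in\N$, see \cite{cdm2000}) together with downward closure of ideals. The reduction of the second inclusion to $\ell$-groups is also sound: via Lemma \ref{l:gamma} one may take $A=\Gamma(G,u)$, maximal ideals of $A$ correspond to maximal $\ell$-ideals of $(G,u)$, and for $x\in[0,u]$ the condition that $nx\leq\neg x$ holds for all $n$ is indeed equivalent, by a short induction, to $nx\leq u$ for all $n$. (Note that the paper offers no proof to compare with: Lemma \ref{mvrad} is quoted from \cite[3.6.4]{cdm2000}.)

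The gap lies in the clause ``any non-infinitesimal can be separated from $0$ in some such quotient''. That is exactly the nontrivial half of the classical statement you invoke, and it is \emph{not} a consequence of H\"older's Theorem: H\"older (Lemma \ref{MVHolder}) describes the quotient by a maximal ideal you already have, but it does not produce, for a given $x$ with $nx\not\leq u$, a maximal $\ell$-ideal omitting $x$. Your closing paragraph makes the difficulty explicit and then does not resolve it --- passing to Archimedean quotients can never upgrade ``$nx\ominus\neg x\in\rad{A}$'' to ``$nx\ominus\neg x=0$''; what is needed is an existence (Zorn-type) argument showing there are \emph{enough} maximal ideals. One way to close the gap: for $x\in\rad{A}$ and any $m\in\N$, the ideal of $A$ generated by $\neg(mx)$ must be improper, since a maximal ideal containing $\neg(mx)$ would omit $x$; taking $m=2n$ this yields $k$ with $k(u-2nx)^{+}\geq u$ in $G$, and since $(u-2nx)^{+}\wedge(nx-u)^{+}=0$ while $(nx-u)^{+}\leq (n-1)u\leq (n-1)k(u-2nx)^{+}$, one gets $(nx-u)^{+}=0$, i.e.\ $nx\leq u$. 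Alternatively, argue through Chang's subdirect representation into totally ordered MV-algebras: for each prime ideal $P$, the element $x/P$ lies in a proper ideal of the chain $A/P$ (pull back a maximal ideal above $P$), in a chain every element $y$ of a proper ideal satisfies $ny\leq\neg y$ for all $n$ (otherwise $(n+1)y=1$ would lie in the ideal), and since the primes intersect to $\{0\}$ this gives $nx\leq\neg x$ in $A$. As written, your appeal to H\"older establishes only the inclusion you had already proved.
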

Because of  the MV-cancellation law (Lemma \ref{canc}), the next lemma implies that the operation $\oplus$ is cancellative on sufficiently small (in particular, infinitesimal) elements:
\begin{lemma} \label{mv5}
If $A$ is an MV-algebra and $x,y\in A$ are such that $x\odot x=0$ and $y\odot y=0$, then $x\odot y=0$. In particular, if $x,y \in \rad{A}$, then $x\odot y=0$.
\end{lemma}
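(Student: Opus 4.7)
My approach hinges on upgrading the conditional statement to a single MV-algebra inequation, so that Chang's Completeness Theorem (recalled just before Example~\ref{ex:c(x)mv}) becomes available as a reduction device. Specifically, I propose to prove the universal inequation
\[
x \odot y \;\leq\; (x \odot x) \oplus (y \odot y),
\]
which immediately yields the first assertion of the lemma: under the hypotheses $x \odot x = 0 = y \odot y$ the right-hand side is $0$, forcing $x \odot y = 0$. Since $\int$ generates the variety of MV-algebras, it suffices to verify this inequation on $\int$.

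Unwinding via the definitions \eqref{eq:plus} and \eqref{eq:not}, in $\int$ the inequation reads
\[
\max(x+y-1,\,0) \;\leq\; \max(2x-1,\,0) + \max(2y-1,\,0),
\]
which I would dispose of by a brief case split. If $x+y \leq 1$ the left-hand side vanishes, so the inequality is trivial. Otherwise $x+y > 1$, and assuming $x \geq y$ without loss of generality we have $x \geq \tfrac{1}{2}$, whence $2x - 1 \geq x + y - 1 > 0$; so already the first summand on the right dominates the left-hand side. For the concluding ``in particular'' clause, suppose $x, y \in \rad A$. Lemma~\ref{mvrad} applied with $n = 1$ yields $x \leq \neg x$ and $y \leq \neg y$; by Lemma~\ref{mv1} the former is equivalent to $\neg x \oplus \neg x = 1$, i.e.\ to $x \odot x = 0$ by the definition of $\odot$, and symmetrically $y \odot y = 0$. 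The first part of the lemma then delivers $x \odot y = 0$.

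The main conceptual step is recognising that the conditional ``$x\odot x = 0$ and $y\odot y = 0$ imply $x\odot y = 0$'' can be absorbed into a single universal inequation between MV-algebraic terms; this is what unlocks Chang's completeness and thereby reduces the whole matter to elementary arithmetic on $\int$. I do not anticipate any genuine obstacle beyond this observation.
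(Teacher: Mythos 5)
Your proof is correct, but it follows a genuinely different route from the paper's. The paper argues intrinsically and equationally: by Lemma~\ref{mv4}.(1) and the cancellation law (Lemma~\ref{canc}) it suffices to show $(x\oplus y)\odot(x\odot y)=0$, which unwinds (via Lemma~\ref{mv1}) to the inequality $x\oplus y\leq\neg x\oplus\neg y$, and the latter follows from $x\leq\neg x$, $y\leq\neg y$ and monotonicity (Lemma~\ref{mv2}). You instead strengthen the quasi-identity to the term inequation $x\odot y\leq(x\odot x)\oplus(y\odot y)$ and verify it in $\int$, invoking Chang's completeness. The crucial (and correct) observation is precisely the one you flag: the conditional itself is a quasi-identity, which is not automatically transferred from $\int$ to all MV-algebras (quasi-identities survive subalgebras and products but not homomorphic images), whereas your inequation is equivalent to an identity $s\vee t=t$ and so does transfer. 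Your verification in $\int$ is fine; the only cosmetic point is that your displayed right-hand side uses ordinary addition rather than the truncated sum, but since the left-hand side lies in $\int$ this changes nothing (indeed in your nontrivial case the single summand $x\odot x$ already dominates, so monotonicity of $\oplus$ finishes it). The ``in particular'' clause is handled exactly as in the paper, via Lemma~\ref{mvrad} with $n=1$. What the two approaches buy: yours is shorter and conceptually clean but leans on the completeness theorem; the paper's is elementary and self-contained, using only the preliminary lemmas already listed, which fits its policy of keeping these auxiliary facts independent of heavier machinery.
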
 
\begin{proof}To prove  the first assertion,
 by Lemmas \ref{mv4}.(1) and \ref{canc} it suffices to prove $(x\oplus y)\odot(x\odot y)=0$. Notice that \begin{align*}
(x\oplus y)\odot(x\odot y)&=(x\oplus y)\odot \neg(\neg x \oplus \neg y) \\
&=\neg(\neg(x\oplus y)\oplus (\neg x\oplus \neg y)). \end{align*}
This means that \begin{align*}(x\oplus y)\odot(x\odot y)=0 &\Leftrightarrow \\
\neg(x\oplus y)\oplus (\neg x\oplus \neg y)=1 &\Leftrightarrow \\
x\oplus y\leq \neg x\oplus\neg y\tag{Lemma \ref{mv1}}&. \end{align*}
To prove the latter inequality, just note that $x\odot x=0$ is equivalent to $x\leq \neg x$ by Lemma \ref{mv1}, and then apply  Lemma \ref{mv2}.  

The second assertion follows at once from the first upon applying Lemma \ref{mvrad}.
\end{proof}
An MV-algebra $A$ is \emph{simple} if it has no non-trivial proper ideals, and  \emph{semisimple} if $\rad{A}=\{0\}$, i.e.\ $A$ is free of infinitesimal elements. Equivalently, semisimple MV-algebras are precisely the  
subdirect products (\cite[II.\S8]{Burris:81}, cf.\ also Footnote \ref{fn:sub-direct}) of simple MV-algebras. Simple and semisimple MV-algebras are central to this paper. We shall see in Lemma \ref{MVHolder}  (H\"older's Theorem) that simple MV-algebras can  
be uniquely identified with subalgebras of $\int$; in Lemma \ref{MVmax} that $\Max{A}$ can be equipped with a compact Hausdorff topology induced by $A$; and in Corollary \ref{cor:reflection}.(2) that if $A$ is semisimple, it can be canonically identified with a subalgebra of $\C(\Max{A})$.
\subsection{The functor $\Gamma$, and lattice-groups.}\label{ss:gamma}
We assume familiarity with the basic notions on lattice-ordered groups. For background  see e.g.\ \cite{BigardKeimel77, Darnel, Glass}. For background on the functor $\Gamma$ see \cite[Chapters 2 and 7]{cdm2000}. 
We recall the needed definitions. By an \emph{$\ell$-group} we throughout mean an Abelian group $G$, always written additively,
with a compatible lattice structure --- that is, the group addition distributes over  the lattice operations. The $\ell$-group is \emph{unital} if it is equipped with a distinguished element $u\in G$ 
such that for all $g \in G$ there is $n\in \N$ 
with $nu\geq g$. We write $\Grp$ for the category whose objects are unital $\ell$-groups, and whose morphisms are the \emph{unital $\ell$-homomorphisms}, i.e.\ the lattice and group homomorphisms that preserve the units. Subobjects in 
$\Grp$, i.e.\ sublattice subgroups that contain the unit, are called \emph{unital $\ell$-subgroups}.

To each unital $\ell$-group $(G,u)$ we associate its \emph{unit interval}, namely
$\Gamma(G,u):=\{g\in G \mid 0 \leq g \leq u\}$, and we endow it with the operations 
\begin{align*}
x\oplus y := (x+y)\wedge u, \\
\neg x := u-x.
\end{align*}
Then $(\Gamma(G,u),\oplus,\neg,0)$ is an MV-algebra \cite[2.1.2]{cdm2000}.  Thus, in the additive group $\R$ under natural order, $1 \in \R$ is a unit and $\Gamma(\R,1)=\int$ is the standard MV-algebra, cf.\ Example \ref{ex:c(x)mv}. 
Similarly, $\Gamma(\Z,1)=\2$, the two-element Boolean algebra.

 Each unital $\ell$-homomorphism $(G,u)\to (G',u')$ restricts to an MV-homomorphism $\Gamma(G,u)\to\Gamma(G',u')$. Hence we obtain a functor $\Gamma\colon \Grp\to\MV$. Note that we will at times write $\Gamma(G)$ for $\Gamma(G,u)$ when $u$ is understood.
 
A \emph{good sequence} $(a_{i})$ in an MV-algebra $A$ is a sequence $a_1,a_2,\ldots$ of elements of $A$ such that  $a_i\oplus a_{i+1}=a_i$ for each $i\in\N$, and there is $n_0\in\N$ such that $a_n=0$ for 
every $n\geq n_0$. For example, good sequences in Boolean algebras are exactly the non-increasing, eventually-zero sequences of elements. Indeed, $\vee$ and $\oplus$ coincide in Boolean algebras.

The collection $M_{A}$ of all good sequences of $A$ is a cancellative Abelian monoid, when addition of good sequences $(a_{i})$ and $(b_{i})$ is defined by
\[
(a_{i})+(b_{i}):= \left(\,a_i\oplus (a_{i-1}\odot b_1)\oplus\cdots\oplus(a_1\odot b_{i-1})\oplus b_i\,\right).
\]
See \cite[2.3.1]{cdm2000}. Further, $M_{A}$ can be lattice-ordered by declaring $(a_{i})\leq (b_{i})$ if, and only if, $a_{i}\leq b_{i}$ for each $i$. Let now $M_A \to \Xi(A)$ denote the universal arrow from the monoid $M_{A}$ to its universal enveloping (Grothendieck) group. 
Since $M_{A}$ is cancellative, the arrow is a monomorphism; let us 
identify $M_{A}$ with its image in  $\Xi(A)$. 
The order of $M_{A}$ extends uniquely to a lattice order on $\Xi(A)$ that makes it into an $\ell$-group \cite[2.4.2]{cdm2000}. The good sequence (1,0,0,\ldots) is a unit of $\Xi(A)$.
Now consider a homomorphism of MV-algebras $h\colon A\to B$, and define a function $h^{*}\colon M_{A}\to M_{B}$ by setting $h^{*}(\,(a_{i})\,):=(h(a_{i}))$. Then $h^{*}$ extends uniquely to a 
unital $\ell$-homomorphism $\Xi(h)\colon\Xi(A)\to\Xi(B)$, \cite[pp.\ 139--140]{cdm2000}. This defines  a functor $\Xi\colon\MV\to \Grp$.
\begin{lemma}[Mundici's equivalence {\cite[7.1.2, 7.1.7]{cdm2000}}]\label{l:gamma}
The functors $\Gamma\colon\Grp\to\MV$ and $\Xi\colon \MV\to\Grp$ yield an equivalence of categories.\qed
\end{lemma}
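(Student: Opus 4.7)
The plan is to exhibit mutually inverse natural isomorphisms $\eta\colon\id{\MV}\Rightarrow\Gamma\Xi$ and $\varepsilon\colon\Xi\Gamma\Rightarrow\id{\Grp}$. For the unit, I would define $\eta_A(a):=(a,0,0,\ldots)$, which lies in $\Gamma(\Xi(A))$ because the distinguished unit of $\Xi(A)$ is the good sequence $(1,0,0,\ldots)$. A direct calculation with the definition of addition of good sequences yields
\[
(a,0,\ldots) + (b,0,\ldots) = (a\oplus b,\; a\odot b,\; 0,\ldots),
\]
whose componentwise meet with $(1,0,\ldots)$ is $(a\oplus b,0,\ldots)$, matching $\oplus$ in $\Gamma(\Xi(A))$. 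Since the MV-identity $a\odot\neg a=0$ gives $(a,0,\ldots)+(\neg a,0,\ldots)=(1,0,\ldots)$, the assignment $\eta_A$ also preserves $\neg$. Bijectivity is immediate: a good sequence $(a_i)$ bounded above by $(1,0,0,\ldots)$ in componentwise order must satisfy $a_i=0$ for every $i\geq 2$.

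For the counit, the key ingredient I would establish first is a canonical good-sequence decomposition in any unital $\ell$-group $(G,u)$. Given $g\in G^+$, set
\[
g_i := \bigl((g-(i-1)u)\wedge u\bigr)\vee 0.
\]
Each $g_i$ lies in $[0,u]$, and the unital condition $nu\geq g$ forces $g_{n+1}=0$. A short $\ell$-group computation shows that $(g_i)$ is a good sequence in $\Gamma(G,u)$ with $g=\sum_i g_i$. I would then define $M_{\Gamma(G,u)}\to G^+$ by $(a_i)\mapsto\sum_i a_i$ and argue that it is a lattice-preserving monoid isomorphism. Because any $\ell$-group is generated as a group by its positive cone, $G$ itself is the Grothendieck group of $G^+$, so this map extends uniquely to a unital $\ell$-group isomorphism $\varepsilon_{(G,u)}\colon\Xi(\Gamma(G,u))\to G$.

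The principal technical obstacle is verifying that the sum map $(a_i)\mapsto\sum_i a_i$ intertwines the baroque monoid operation on good sequences with ordinary addition in $G$. This rests on the Riesz-style identity $(x+y)\wedge u = x + \bigl(y\wedge(u-x)\bigr)$, valid whenever $0\leq x\leq u$, together with an inductive extension that tracks how additions overflow beyond $u$; once this is in hand, distributivity of group addition over the lattice operations settles preservation of $\wedge$ and $\vee$, and the unit is sent to the unit by construction. Naturality of $\eta$ is built into the definition of $\Xi$ on morphisms, which acts on good sequences componentwise via $h$; naturality of $\varepsilon$ follows because unital $\ell$-homomorphisms preserve each of $+,\wedge,\vee,0,u$ and hence preserve the canonical decomposition $g=\sum_i g_i$. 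Finally, one checks that $\eta_A$ and $\varepsilon_{(G,u)}$ are mutually inverse on the generating elements $(1,0,\ldots)\leftrightarrow u$, and then everywhere by additivity.
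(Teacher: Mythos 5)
The paper does not prove this lemma itself---it simply cites the standard argument in \cite[7.1.2, 7.1.7]{cdm2000}---and your proposal is essentially that argument: the unit sends $a$ to the length-one good sequence $(a,0,0,\ldots)$, and the counit sums good sequences, with surjectivity coming from the canonical decomposition $g_i=((g-(i-1)u)\wedge u)\vee 0$ of positive elements. Your sketch is correct as it stands (your closing remark about $\eta_A$ and $\varepsilon_{(G,u)}$ being ``mutually inverse'' is not needed: once both are natural isomorphisms, the equivalence already follows), so nothing essential distinguishes it from the cited proof.
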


\subsection{The relationship with $\mathrm{C}^*$-algebras.}\label{ss:Cstar}
In this subsection we assume familiarity with $\mathrm{C}^*$-algebras. For the needed background see e.g.\ \cite{Dixmier77}.
We write $\Cst$ for the category of complex commutative unital $\mathrm{C}^*$-algebras. 
If $A$ is  an algebra in $\Cst$, write $\H(A)$ for the set of self-adjoint elements of $A$. The latter is a real linear space complete in the norm of $A$. 
Moreover, $\H(A)$ carries a partial order: an element $x\in\H(A)$ is \emph{positive} if there is $y\in\H(A)$ such that $x=y^2$. This partial order is in 
fact a lattice order, and this is known to be characteristic of commutative $\mathrm{C}^*$-algebras (see \cite{Sherman:1951}). Since the addition of 
$A$
is compatible with this lattice order, $\H(A)$ is an $\ell$-group with unit $1_A$, where the latter denotes the (ring) unit of $A$. The correspondence
$A\mapsto\H(A)$ is functorial, and yields a functor $\H\colon\Cst\to\Grp$. It can be shown that $\H$ is full and faithful, and moreover admits a left
adjoint $\mathrm{L}\colon\Grp\to\Cst$, so that $\Cst$ is equivalent to a full reflective subcategory of $\Grp$.  For a unital $\ell$-group $(G,u)$, $\mathrm{L}(G,u)$ can 
be described as $\C(\Max{G}, \CC)$, the $\mathrm{C}^*$-algebra of complex-valued continuous functions on the (compact Hausdorff) space of 
maximal congruences of $G$. The space $\Max{G}$ is canonically homeomorphic to the space $\Max{\Gamma(G,u)}$ (see Subsections \ref{ss:ideals} and \ref{ss:gamma}).
If $X$ is a compact Hausdorff space, define
\begin{align*}
\C(X,\R):=\{f\colon X\to\R\mid f \,\text{ is continuous}\}.
\end{align*}
Since $\R$ is an $\ell$-group under addition and natural order, with unit $1$, we regard $\C(X,\R)$ as an object of $\Grp$ by defining operations pointwise; the unit of $\C(X,\R)$ is thus $1_X\colon X\to \R$, the function constantly equal to $1$ on $X$. 
Now we see that $\H\circ \mathrm{L}(G,u)=\C(\Max{G},\R)$.
Observe\footnote{Here and throughout, we abuse notation  and 
consider $\C(X)$ as a subset of $\C(X,\R)$. That is, we tacitly identify functions $X\to\R$ whose range is contained in $\int$ with functions $X\to \int$.} that $\Gamma(\C(\Max{G},\R),1_{X})=\C(\Max{G})$, the MV-algebra \eqref{eq:c(x)} of Example \ref{ex:c(x)mv}.
The following three objects determine each other up to natural isomorphism: the $\mathrm{C}^*$-algebra $A$, the unital $\ell$-group $\H(A)$, and the MV-algebra $\Gamma(\H(A))$.
For instance, consider the $\mathrm{C}^*$-algebra $\CC$ of complex numbers. The self-adjoint elements of $\CC$ form the $\ell$-group  $\R$ with unit $1$, whose unit interval is $\Gamma(\R,1)=\int$. The unital  $\ell$-group $(\R,1)$ uniquely determines $\CC$, by complexification (see e.g.\ \cite[pp.\ 71--80]{Goodearl82}). The MV-algebra $\int$ uniquely determines $(\R,1)$, by Mundici's theorem (Lemma \ref{l:gamma}),  and  therefore also the $\mathrm{C}^*$-algebra $\CC$.
 Returning to the $\mathrm{C}^*$-algebra $A$, note that
  $\Gamma(\H(A),1_A)$ is the collection of all its non-negative, self-adjoint elements whose norm does not exceed $1$ (i.e., that lie in  the unit ball of $A$). Summing up:  
 \emph{any $\mathrm{C}^*$-algebra in $\Cst$ is uniquely 
 determined by the MV-algebra of non-negative self-adjoint elements in its unit ball.}
 
 If we compose the functor $\Gamma\circ \H$ with the underlying-set functor $\MV\to\Set$, we obtain a functor $P\colon\Cst\to\Set$ which is known to be monadic. This is one way of stating that $\Cst$ is equivalent to a variety of algebras. In fact, there are several results in the literature concerned with the monadicity of functors from categories of $\mathrm{C}^*$-algebras 
 to $\Set$. Although none of these results is  needed in this paper, it is appropriate to mention the main ones. In 1971, Negrepontis proved \cite[Theorem 1.7]{Negrepontis71} that the \emph{unit ball functor}, sending an 
 algebra in $\Cst$ to the set of its elements 
 whose norm does not exceed $1$, is monadic. Isbell's cited paper \cite{Isbell} entails the analogous fact for the \emph{Hermitian unit ball 
 functor} that maps an object of $\Cst$ to 
 the set of self-adjoint elements in its unit ball. In 1984, Van Osdol \cite[Theorem 1]{vanOsdol84} generalised Negrepontis' result by proving that, 
 in fact, the category of complex (not necessarily commutative, nor necessarily unital) $\mathrm{C}^*$-algebras is monadic over $\Set$ with respect to 
 the unit ball functor. Further monadicity results were provided in \cite{PelletierRosicky89, PelletierRosicky93} for various functors, including the \emph{positive unit ball functor} that sends a (not necessarily commutative)
 $\mathrm{C}^*$-algebra to the set of non-negative, self-adjoint elements in its unit ball: in the commutative setting this is precisely the functor $P$ above.
 Our main result provides a finite equational axiomatisation of those algebraic structures consisting of positive elements in unit balls of $\mathrm{C}^*$-algebras in $\Cst$.
 We take advantage of the fact that these can be conceived of as MV-algebras with an additional infinitary operation.
\begin{remark}\label{rm:AF}
We alert the reader to the fact that there is a second, different way of relating $\mathrm{C}^*$-algebras to MV-algebras. Bratteli's approximately finite-dimensional (AF) $\mathrm{C}^*$-algebras are classified by  certain unital partially ordered Abelian groups, namely Elliott's ordered $K_0$. The partially ordered groups arising in this manner are known as dimension groups, and include all unital $\ell$-groups. Thus, for each AF $\mathrm{C}^*$-algebra $A$ with lattice-ordered $K_0$ there is an associated MV-algebra, unique up to isomorphism, namely $\Gamma\circ K_0(A)$, and all MV-algebras arise in this manner. Those AF $\mathrm{C}^*$-algebras which are commutative are precisely those of the form $\C(X,\CC)$ for $X$ a Stone space. 
For AF $\mathrm{C}^*$-algebras, dimension groups, and their relationship with MV-algebras the reader can consult e.g.\ \cite{RLL00, Goodearl86, Mundici86}.
\end{remark}

\section{The Cignoli-Dubuc-Mundici adjunction.}\label{s:reflection}
It was first observed in \cite[Section 4]{Cignolietal2004} that there is an adjunction between the category $\MV$ of MV-algebras and $\KH^{\rm op}$. 
Following \cite{PT91}, this can be regarded as a dual adjunction between the concrete categories $\MV$ and $\KH$. Here, as usual, \emph{concrete} means \emph{equipped with a faithful functor to $\Set$}; both $\MV$ and $\KH$ are equipped with their underlying-set functors. Moreover, these underlying-set functors are represented by the  MV-algebra freely generated by one element, and by the one-point space, respectively. In this situation one can ask whether the  adjunction at hand is  \emph{natural} in the sense of \cite[p.\ 116]{PT91}. Indeed, the Cignoli-Dubuc-Mundici  adjunction is natural, and is induced by the unit interval $\int$ conceived of as a \emph{dualising object} \cite[Definition 1.6]{PT91}: recall from Example \ref{ex:c(x)mv} that the compact Hausdorff space $\int$ may be regarded  as an MV-algebra. Conversely, the mere knowledge of the fact that $\int$ is a dualising object for $\MV$ and $\KH$ would yield the Cignoli-Dubuc-Mundici adjunction from \cite[Theorem 1.7]{PT91}. In fact, it is known that, under relatively mild conditions, \emph{any} dual adjunction between concrete categories is necessarily induced by a dualising object; see \cite{LR79, DT89, PT91}. In short, general categorical results in duality theory apply to our specific situation. In this section we give an account of the Cignoli-Dubuc-Mundici adjunction that  indicates the way the result fits into general duality theory, and at the same time explains how to relate the treatment based on representable functors with the classical approach that uses maximal ideals and the Stone topology.

Given a compact Hausdorff space $X$, recall the MV-algebra $\C(X)$ defined in Example \ref{ex:c(x)mv}. Given a continuous map $f\colon X \to Y$ in $\KH$, it is elementary that the induced function 
\begin{align*}
\C(f)\colon \C(Y)&\longrightarrow\C(X),\\
g\in \C(Y)&\longmapsto g\circ f \in \C(X)
\end{align*}
is a morphism in \MV. We therefore regard $\C$ as a functor:
\begin{align*}
\C \colon  \KH^{\rm op}\to  \MV. 
\end{align*}
Observe that, in the terminology of \cite{PT91} and by virtue of the continuity of the MV-algebra operations of $\int$, this functor $\C$ is a lifting along the underlying-set functor $\MV\to \Set$ of the contravariant\footnote{The reader is advised to bear in mind that while we only use covariant functors, \cite{PT91} uses contravariant functors.} $\hom$-functor $\hom_{\KH}{(-,\int)}$. Symmetrically, regarding $\int$ as an MV-algebra, we consider the contravariant $\hom$-functor $\hom_{\MV}{(-,\int)}\colon\MV\to \Set$.
Although this is not obvious, it turns out that this functor admits a lifting to $\KH$ along the  underlying-set functor $\KH\to \Set$, which we again write
\begin{align}\label{eq:homfunctor}
\hom_{\MV}{(-,\int)}\colon\MV\to \KH.
\end{align}
The lifting is achieved by equipping $\hom_{\MV}{(A,\int)}$ with the initial topology with respect to the family of evaluations $\left(\varphi_{a}\right)_{a \in A}$, where each evaluation  sends a homomorphism $h\colon A\to\int$ to $h(a)$.  In classical approaches the functor  \eqref{eq:homfunctor} is equivalently replaced by the maximal spectrum functor, as we now explain. For  an arbitrary MV-algebra $A$ and any subset $S\subseteq A$, define 
\begin{align*} 
\V{(S)}:=\{\m \in \Max{A}\mid S\subseteq \m\}.
\end{align*}
 If $a \in A$, write $\V{(a)}$ as a shorthand for $\V{(\{a\})}$.  Then:
\begin{lemma}[{\cite[4.15, and Section 4.4]{Mundici11}}] \label{MVmax}
For any MV-algebra $A$, the collection 
 \begin{align*} 
 \{\V{(I)}\mid I \text{ is an ideal of } A\}
 \end{align*}
 is the set of closed sets for a compact Hausdorff topology on $\Max{A}$.  The collection
 \begin{align*} 
 \{\V{(a)}\mid  a\in A\}
 \end{align*}
is  a basis of closed sets for this topology.\qed
\end{lemma}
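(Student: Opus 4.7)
The plan is to verify four things: (i) the collection $\{\V(I) \mid I \text{ an ideal of } A\}$ is closed under arbitrary intersections and finite unions and contains $\emptyset$ and $\Max A$, so defines a topology by its closed sets; (ii) the principal closed sets $\V(a)$ form a basis of that topology; (iii) the topology is compact; (iv) it is Hausdorff. Parts (i)-(iii) are routine; the main obstacle will be (iv).

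For (i), I would verify $\V(A) = \emptyset$, $\V(\{0\}) = \Max A$, $\bigcap_\alpha \V(I_\alpha) = \V(\sum_\alpha I_\alpha)$ (where $\sum_\alpha I_\alpha$ is the ideal generated by $\bigcup_\alpha I_\alpha$), and $\V(I) \cup \V(J) = \V(I \cap J)$. Only the inclusion $\V(I \cap J) \subseteq \V(I) \cup \V(J)$ is non-trivial, resting on the standard fact \cite{cdm2000} that maximal ideals of an MV-algebra are prime: if a maximal $\m$ contained neither $I$ nor $J$, picking $a \in I \setminus \m$ and $b \in J \setminus \m$ would give $a \wedge b \in I \cap J \subseteq \m$ with $a, b \notin \m$, contradicting primality. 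Property (ii), $\V(I) = \bigcap_{a \in I} \V(a)$, is then immediate. For compactness (iii) I would invoke the finite intersection property: if $\bigcap_\alpha \V(I_\alpha) = \V(\sum_\alpha I_\alpha) = \emptyset$, then by Lemma \ref{lindenbaum} the sum equals $A$, so $1 = a_1 \oplus \cdots \oplus a_k$ with the $a_j$'s drawn from finitely many $I_{\alpha_j}$'s; that finite subfamily already has empty intersection.

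The hard part is Hausdorffness, where MV-specific structure becomes essential (the analogous property fails for the maximal spectrum of a general commutative ring). Given distinct $\m_1, \m_2 \in \Max A$, I would pick $a \in \m_1 \setminus \m_2$. The ideal generated by $\m_2 \cup \{a\}$ strictly contains $\m_2$ and hence equals $A$ by maximality, so $1 = m \oplus na$ for some $m \in \m_2$ and $n \in \N$. Setting $b := \neg a \odot \cdots \odot \neg a$ ($n$ factors), De Morgan yields $b = \neg(na)$, whence Lemma \ref{mv1}.(1) gives $b \leq m$ and so $b \in \m_2$. Conversely $b \notin \m_1$: since $a \in \m_1$, the image of $\neg a$ in the non-trivial quotient $A/\m_1$ equals $1$, so the image of $b$ is $1 \odot \cdots \odot 1 = 1 \neq 0$.

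Now set $a' := a \ominus b$ and $b' := b \ominus a$. The MV identity $(x \ominus y) \wedge (y \ominus x) = 0$ --- which transcribes the $\ell$-group identity $(x-y)^+ \wedge (y-x)^+ = 0$ via Mundici's equivalence (Lemma \ref{l:gamma}) --- gives $a' \wedge b' = 0$. The MV identity $a = (a \wedge b) \oplus (a \ominus b)$, a direct consequence of Lemma \ref{mv4}.(2) together with the identity $(a \oplus \neg b) \odot b = a \wedge b$ read off from the definition of $\wedge$, combined with $a \wedge b \leq b \in \m_2$, shows that $a' \in \m_2$ would force $a \in \m_2$, contradicting the choice of $a$; hence $a' \notin \m_2$ and symmetrically $b' \notin \m_1$. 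Finally, primality of any maximal $\m$ applied to $a' \wedge b' = 0 \in \m$ forces $a' \in \m$ or $b' \in \m$, so $U_{a'} \cap U_{b'} = \emptyset$. Thus $U_{a'} \ni \m_2$ and $U_{b'} \ni \m_1$ are disjoint open neighborhoods, as required.
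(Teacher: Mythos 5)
Your proof is correct. Note that the paper does not prove this lemma at all: it is quoted from the literature (Mundici's book, 4.15 and Section 4.4), so there is no in-paper argument to compare against; your write-up supplies exactly the standard proof of that cited result. The decomposition is the usual one: the Zariski-style verification that the $\V(I)$ are the closed sets (with $\V(I)\cup\V(J)=\V(I\cap J)$ resting on primality of maximal ideals), compactness via the finite intersection property and the description of the join of ideals, and Hausdorffness from the MV-identities $x=(x\wedge y)\oplus(x\ominus y)$ and $(x\ominus y)\wedge(y\ominus x)=0$ together with primality --- all of which you justify correctly (the appeal to Mundici's equivalence, Lemma \ref{l:gamma}, to verify the identities mirrors how the paper itself argues in later proofs). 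Two cosmetic points only: in the compactness step you invoke Lemma \ref{lindenbaum}, which as stated gives a maximal ideal of a non-trivial algebra, whereas you need that every \emph{proper ideal} extends to a maximal one; this follows by applying Lemma \ref{lindenbaum} to the quotient by the proper ideal and pulling back along the quotient map via Lemma \ref{mv6} (or directly by Zorn's Lemma), and is worth a clause. Also, the sets $U_{a'}$, $U_{b'}$ appear without being introduced; say explicitly that $U_{a'}:=\Max{A}\setminus\V(a')$ and $U_{b'}:=\Max{A}\setminus\V(b')$, which are open since the $\V(\cdot)$ are basic closed sets.
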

The topology in Lemma \ref{MVmax} is the \emph{Stone topology} of $A$, also known as  the \emph{Zariski} or \emph{hull-kernel} topology. The set $\Max{A}$ equipped with the Stone topology is known as the \emph{maximal spectrum} or the \emph{maximal spectral space} of $A$. 
\begin{lemma}\label{mv6} 
For any MV-homomorphism $h\colon A\to B$, if $\m\in\Max{B}$ then $h^{-1}(\m)\in\Max{A}$. Moreover, the inverse-image map $h^{-1}\colon \Max{B}\to\Max{A}$ is continuous with respect to the Stone topology.
\end{lemma}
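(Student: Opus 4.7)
The lemma splits into two claims: first, that $h^{-1}(\m)$ is a maximal ideal of $A$; second, that the inverse-image map $h^{-1}\colon \Max B \to \Max A$ is continuous in the Stone topology. The second claim is a short computation once the first is available; the first is the real content.

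For the maximality of $h^{-1}(\m)$, I would proceed directly. It is immediate that $h^{-1}(\m)$ is a proper ideal of $A$: it is $0$-containing, downward closed, and $\oplus$-closed by transport along $h$, and since $h(1)=1\notin\m$ we have $1\notin h^{-1}(\m)$. For maximality, suppose for contradiction that some ideal $J$ of $A$ satisfies $h^{-1}(\m)\subsetneq J\subsetneq A$, and pick $a\in J\setminus h^{-1}(\m)$. Then $h(a)\notin\m$ and, by maximality of $\m$ in $B$, the ideal generated by $\m\cup\{h(a)\}$ is all of $B$. A standard description of this ideal, derivable from Lemmas \ref{mv1} and \ref{mv2}, says its elements are exactly those below some $m\oplus n\cdot h(a)$ with $m\in\m$ and $n\in\N$. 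Taking $1$ in this ideal, we find $m\in\m$ and $n\in\N$ with $1=m\oplus n\cdot h(a)$; equivalently, by Lemma \ref{mv1}, $\neg(n\cdot h(a))\leq m$. Therefore $h(\neg(na))=\neg(n\cdot h(a))\in\m$, so $\neg(na)\in h^{-1}(\m)\subseteq J$. Since $a\in J$ and $J$ is $\oplus$-closed, also $na\in J$; and then $1=\neg(na)\oplus na\in J$ (using $\neg x \oplus x = 1$, a direct consequence of Lemma \ref{mv1}), contradicting $J\neq A$.

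For continuity, by Lemma \ref{MVmax} the family $\{\V(a)\mid a\in A\}$ is a basis of closed sets for the Stone topology on $\Max A$, so it suffices to check that preimages of basic closed sets are closed in $\Max B$. Direct unpacking gives
\[
(h^{-1})^{-1}\left(\V(a)\right)=\{\m\in\Max B\mid a\in h^{-1}(\m)\}=\{\m\in\Max B\mid h(a)\in\m\}=\V(h(a)),
\]
which is basic closed in $\Max B$ by Lemma \ref{MVmax}.

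The main obstacle is the maximality step. A conceptually slicker route would pass through quotients: the composite $A\to B\to B/\m$ has kernel $h^{-1}(\m)$, yielding an embedding $A/h^{-1}(\m)\hookrightarrow B/\m$ into a simple algebra; one then argues that such a subalgebra must itself be simple, so $h^{-1}(\m)$ is maximal. That argument, however, relies on H\"older's theorem, forward-referenced here as Lemma \ref{MVHolder}, to identify simples with subalgebras of $\int$. The element-level proof above uses only the elementary lemmas of Subsection \ref{ss:elem} and is therefore preferable at this point in the exposition.
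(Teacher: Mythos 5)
Your proof is correct. For the continuity claim you do exactly what the paper does: the paper dismisses it as ``a straightforward verification using Lemma \ref{MVmax}'', and your computation $(h^{-1})^{-1}(\V(a))=\V(h(a))$, combined with the fact that every closed set $\V(I)=\bigcap_{a\in I}\V(a)$ is an intersection of basic closed sets, is precisely that verification. For the first claim, however, the paper gives no argument at all: it simply cites \cite[1.2.16]{cdm2000}. Your element-level maximality argument is therefore a genuine, self-contained substitute. It is sound: the description of the ideal generated by $\m\cup\{h(a)\}$ as the downward closure of $\{m\oplus n\,h(a)\mid m\in\m,\ n\in\N\}$ checks out (closure under $\oplus$ via Lemma \ref{mv2} and associativity), the equivalence $1=m\oplus n\,h(a)\Leftrightarrow\neg(n\,h(a))\leq m$ is Lemma \ref{mv1}.(1), and pulling $\neg(na)$ back into $J$ forces $1=\neg(na)\oplus na\in J$, the desired contradiction. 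Your closing remark is also apt: the quotient route $A/h^{-1}(\m)\hookrightarrow B/\m$ would need H\"older's Theorem (Lemma \ref{MVHolder}), which appears only later, so the elementary argument — or the paper's outsourcing to \cite{cdm2000} — is the right choice at this point in the exposition. What your version buys is self-containedness at the cost of a paragraph; what the paper's buys is brevity by leaning on the standard reference.
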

\begin{proof}The first assertion is proved in \cite[1.2.16]{cdm2000}. The second assertion is a straightforward verification using Lemma \ref{MVmax}.
\end{proof}
In light of Lemma \ref{mv6} we henceforth regard $\Max$ as a functor:
\begin{align}\label{eq:maxfunctor}
\Max \colon \MV \to \KH^{\rm op}.
\end{align}
The functor \eqref{eq:maxfunctor} and the (covariant version of the) functor \eqref{eq:homfunctor} are naturally isomorphic, and we will freely  identify them  in the rest of this section whenever convenient. This identification rests on the following key fact.
\begin{lemma}[H\"older's Theorem ({\cite{Holder01}, see also \cite{HolderTR1,HolderTR2})} for MV-algebras {\cite[3.5.1]{cdm2000}}]\label{MVHolder}
For every MV-algebra $A$, and for every  $\m\in\Max{A}$,  there is  \emph{exactly one}  homomorphism of MV-algebras \[\h_{\m}\colon\tfrac{A}{\m}\to\int,\]
and this homomorphism is injective.
Moreover, the correspondence  $\m \mapsto \h_{\m}$ is a bijection between $\Max{A}$ and $\hom_{\MV}{(A,\int)}$.
\qed
\end{lemma}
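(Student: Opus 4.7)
The plan is to reduce the statement to the classical Hölder theorem for Archimedean totally ordered groups, using Mundici's equivalence (Lemma \ref{l:gamma}) as the transport mechanism between MV-algebras and unital $\ell$-groups.

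First I would show that for a maximal ideal $\mathfrak{m}\subseteq A$ the quotient $A/\mathfrak{m}$ is a simple MV-algebra, which is immediate from the correspondence between ideals and congruences (Subsection \ref{ss:ideals}). Applying the functor $\Xi\colon\MV\to\Grp$, one obtains a unital $\ell$-group $(\Xi(A/\mathfrak{m}),u)$ with no non-trivial ideals (the equivalence sends simple MV-algebras to simple unital $\ell$-groups, since $\Gamma$ and $\Xi$ preserve the respective ideal lattices). A simple unital $\ell$-group is totally ordered and Archimedean with respect to its unit: total order follows from the absence of non-trivial $\ell$-ideals, and the Archimedean property follows because the set of elements dominated by every positive multiple of a fixed non-zero positive element generates a proper $\ell$-ideal.

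Next I would invoke the classical Hölder theorem (\cite{Holder01, HolderTR1, HolderTR2}): every Archimedean totally ordered group admits a unique order-preserving embedding into $(\mathbb{R},+,\leq)$ sending a prescribed positive element to a prescribed positive real. Taking the prescribed element to be the unit $u$ and its image to be $1\in\mathbb{R}$, we obtain a \emph{unique} unital $\ell$-homomorphism
\[
\Xi(A/\mathfrak{m})\longrightarrow(\mathbb{R},1),
\]
and this homomorphism is injective. Applying $\Gamma$ and invoking Mundici's equivalence, we transport this to a unique injective MV-homomorphism $\mathfrak{h}_\mathfrak{m}\colon A/\mathfrak{m}\to\Gamma(\mathbb{R},1)=[0,1]$. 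The uniqueness at the MV-level follows from the uniqueness at the $\ell$-group level, again because $\Gamma$ is an equivalence and every MV-homomorphism $A/\mathfrak{m}\to[0,1]$ lifts to a unital $\ell$-homomorphism $\Xi(A/\mathfrak{m})\to(\mathbb{R},1)$.

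Finally, for the bijection, I would argue in two directions. Given $\mathfrak{m}\in\Max A$, compose the canonical projection $A\twoheadrightarrow A/\mathfrak{m}$ with $\mathfrak{h}_\mathfrak{m}$ to obtain an element of $\hom_{\MV}(A,[0,1])$. Conversely, given $h\colon A\to[0,1]$, the kernel $h^{-1}(0)$ is an ideal of $A$, and since $[0,1]$ has no non-trivial proper ideals (being simple, as follows from the Archimedean property of $\mathbb{R}$), the image $h(A)\subseteq[0,1]$ is a simple MV-algebra, so $h^{-1}(0)$ is maximal. That these two constructions are mutually inverse is then a direct consequence of the uniqueness of $\mathfrak{h}_\mathfrak{m}$ established above, together with the standard homomorphism theorem.

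The main obstacle is the passage from the simplicity of $A/\mathfrak{m}$ as an MV-algebra to the Archimedean total-ordering of $\Xi(A/\mathfrak{m})$; all the remaining steps are then either formal consequences of Mundici's equivalence or direct citations of the classical Hölder theorem. The verification that simplicity transports through $\Xi$ in the form needed — i.e.\ that the lack of non-trivial MV-ideals forces both the total order and the Archimedean condition on the enveloping $\ell$-group — is where the real content lies.
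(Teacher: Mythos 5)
Your proposal is correct in substance, but note that the paper does not prove this lemma at all: it is stated with a citation to \cite[3.5.1]{cdm2000} (and to H\"older's original work), so there is no in-paper argument to match. The proof in the cited source works directly at the MV-level --- a simple MV-algebra is shown to be linearly ordered and Archimedean, and is then embedded into $[0,1]$ by a H\"older-style cut argument --- whereas you route everything through Mundici's equivalence and the classical H\"older theorem for Archimedean totally ordered groups. That is a perfectly legitimate alternative decomposition, and it is consistent with the paper's toolkit, since Lemma \ref{l:gamma} is taken as background and the $\Gamma$/$\Xi$ equivalence in \cite{cdm2000} is established independently of 3.5.1, so no circularity arises; what it costs you is the need to invoke the preservation of ideal lattices under $\Gamma$ and $\Xi$ (essentially \cite[Chapter 7]{cdm2000}), which, as you rightly say, is where the real content of your reduction sits and should be cited rather than asserted.

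Three points need tightening, none fatal. First, your Archimedean argument is phrased backwards: the relevant convex $\ell$-subgroup is the set of $g$ with $n|g|\leq a$ for \emph{all} $n$ (the elements infinitesimal with respect to a fixed positive $a$), not the elements dominated by every multiple of $a$; if this subgroup is non-trivial it is proper (it omits $a$), contradicting simplicity. Second, in the converse direction of the bijection, ``the image $h(A)\subseteq[0,1]$ is simple because $[0,1]$ is simple'' is not a valid inference in general universal algebra; the correct one-line justification is that for any real $a$ with $0<a\leq 1$ one has $na=1$ for large $n$, so every non-zero ideal of a subalgebra of $[0,1]$ contains $1$ --- i.e.\ use the Archimedean characterisation of simplicity, which manifestly passes to subalgebras. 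Third, for uniqueness you should observe explicitly that \emph{any} unital $\ell$-homomorphism $\Xi(A/\mathfrak{m})\to(\R,1)$ has trivial kernel by simplicity and hence is an order embedding, so that the classical uniqueness statement (which concerns normalised embeddings) indeed applies to all candidate homomorphisms.
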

\begin{remark}\label{r:notsurjective} We emphasise that H\"older's map $\h_{\m}$ in Lemma \ref{MVHolder} need not be surjective, in contrast to its classical functional-analytic counterparts by Stone   \cite[p.\ 85]{StoneSpectra2} (for the real numbers) and 
Gelfand \cite[Satz 7]{gelfand1941} (for the complex numbers).
\end{remark}
One can now verify that there is an adjunction  $\Max\dashv\C\colon \KH^{\rm op}\to \MV$, or equivalently,  that the contravariant functors $\hom_{\MV}{(-,\int)}$ and $\hom_{\KH}{(-,\int)}$ yield a dual adjunction between $\MV$ and $\KH$, in the sense of \cite{PT91}. In fact, the existence of the latter dual adjunction follows from \cite[Theorem 1.7]{PT91} (see also \cite[Proposition 2.4]{DT89}). The unit $\eta$ and the counit $\epsilon$ of the adjunction are necessarily given by evaluation as in \cite[(6) and (7) on p.\ 115]{PT91}. 
In detail, if $a$ is an element of the MV-algebra $A$, in light of Lemma \ref{MVHolder} evaluation at $\m\in\Max{A}$ yields 
\begin{align*}
\widehat{a}\colon \Max{A}&\longrightarrow \int \\
\m &\longmapsto \h_{\m}(\tfrac{a}{\m}).
\end{align*}
Evaluation thus provides us\footnote{We point out that if one directly uses the classical functor $\Max$, without identifying it with $\hom_{\MV}{(-,\int)}$, one needs to prove that $\widehat{a}\colon\Max{A} \to \int$ is continuous with respect to the Stone topology on the domain and the Euclidean topology on the codomain.  See \cite[4.16.iii]{Mundici11} for a proof.} with a homomorphism
\begin{align}
\eta_{A} \colon A &\longrightarrow \C(\Max{A})\label{eq:eta}\\
a&\longmapsto \widehat{a}\nonumber
\end{align}
for each MV-algebra $A$.

Conversely, using maximal ideals, for any space $X$ in $\KH$ there is a continuous map
\begin{align}
\epsilon_{X} \colon X &\longrightarrow \Max{\C(X)}\label{eq:epsilon}\\
x&\longmapsto \{f\in\C(X)\mid f(x)=0\}.\nonumber
\end{align}
(Compare \cite[4.16]{Mundici11}.)
Writing $\id{{\sf C}}$ for the identity functor on a category {\sf C}, we  can summarise as follows.
\begin{theorem}[{\cite[Propositions 4.1 and 4.2]{Cignolietal2004}}]\label{t:Max-C-adj}The functor $\Max$ is left adjoint to $\C$, i.e.\ $\Max\dashv\C\colon \KH^{\rm op}\to \MV$. The unit and the counit of the adjunction are the natural transformations $\eta \colon \id{\MV} \to \C\circ\Max$ and $\epsilon \colon \Max\circ\C \to \id{\KH^{\rm op}}$ whose components are given by \eqref{eq:eta} and \eqref{eq:epsilon}, respectively.\qed
\end{theorem}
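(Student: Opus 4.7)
The plan is to verify the four standard ingredients of the adjunction: $\eta_{A}$ and $\epsilon_{X}$ are well-defined morphisms, they are natural, and the triangle identities hold. Throughout, H\"older's Theorem (Lemma \ref{MVHolder}) is the central tool: it identifies $\Max{A}$ set-theoretically with $\hom_{\MV}(A,\int)$ via $\m\mapsto\h_{\m}$, and comparing the Stone topology on $\Max{A}$ with the pointwise-convergence topology on $\hom_{\MV}(A,\int)\subseteq\int^{A}$ places the theorem within the abstract framework of concrete dual adjunctions induced by a dualising object \cite[Theorem 1.7]{PT91}.

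The easy verifications first. For $\epsilon_{X}$: the set $\m_{x}:=\{f\in\C(X)\mid f(x)=0\}$ is the kernel of the MV-homomorphism $\C(X)\to\int$ given by evaluation at $x$, and maximality follows because $\int$ is simple; continuity of $\epsilon_{X}$ is immediate from $\epsilon_{X}^{-1}(\V(f))=f^{-1}(0)$. That $\eta_{A}$ preserves the MV-operations holds pointwise, since each $\h_{\m}$ composed with the quotient $A\to A/\m$ is an MV-homomorphism. Naturality of $\epsilon$ reduces to the tautology $\{g\in\C(Y)\mid g\circ f\in\m_{x}\}=\m_{f(x)}$; naturality of $\eta$ follows by applying the uniqueness clause of H\"older to the injection $\bar h\colon A/h^{-1}(\m)\hookrightarrow B/\m$ induced by $h\colon A\to B$, yielding $\h_{h^{-1}(\m)}=\h_{\m}\circ \bar h$ and hence $\C(\Max h)(\widehat{a})=\widehat{h(a)}$. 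The triangle identities reduce to $\{a\in A\mid\widehat{a}(\m)=0\}=\m$ (immediate from injectivity of $\h_{\m}$) and $\widehat{f}(\m_{x})=f(x)$ (immediate from H\"older uniqueness applied to evaluation at $x$).

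The substantive obstacle is the continuity of $\widehat{a}\colon\Max{A}\to\int$ for each $a\in A$. I would verify this by exhibiting, for each rational $c\in[0,1]$, an element $b_{c}\in A$ with $\widehat{a}^{-1}([0,c])=\V(b_{c})$. The simple case $c=1/n$ is handled directly:
\begin{align*}
\widehat{a}(\m)\geq\tfrac{1}{n}\iff n\h_{\m}(a/\m)=1\iff \h_{\m}(\neg(na)/\m)=0\iff \neg(na)\in\m,
\end{align*}
giving $\widehat{a}^{-1}([1/n,1])=\V(\neg(na))$ and dually $\widehat{a}^{-1}([0,1-1/n])=\V(\neg(n\neg a))$. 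For general $c=p/q$, I would pass through Mundici's $\Gamma$-equivalence (Lemma \ref{l:gamma}): the element $((qa-pu)\vee 0)\wedge u$ formed in $\Xi(A)$ lies in $A=\Gamma(\Xi(A))$ and has value $\min(\max(q\widehat{a}(\m)-p,0),1)$ at $\m$, so its membership in $\m$ characterises $\widehat{a}(\m)\leq p/q$. Arbitrary closed intervals in $\int$ are obtained as intersections of rational ones, so $\widehat{a}$ is continuous. This topological reconciliation is the only non-routine point; once in hand, the rest of the theorem is essentially definitional unwinding of H\"older's Theorem.
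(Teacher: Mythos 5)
Your argument is correct, but it is worth noting that the paper itself does not prove Theorem \ref{t:Max-C-adj} at all: it quotes \cite[Propositions 4.1 and 4.2]{Cignolietal2004}, observes that the adjunction also follows from the general theory of concrete dual adjunctions induced by a dualising object \cite[Theorem 1.7]{PT91}, and delegates the continuity of $\widehat{a}$ to \cite[4.16.iii]{Mundici11} in a footnote. So your proposal supplies a self-contained verification where the paper relies on citation, and it does so along exactly the lines the paper gestures at: H\"older's Theorem (Lemma \ref{MVHolder}) drives all the ``soft'' steps (well-definedness of $\epsilon_{X}$ via kernels of evaluations, naturality of $\eta$ and the triangle identities via the uniqueness clause, with Lemma \ref{mv6} guaranteeing that $h^{-1}(\m)$ is maximal), and the only genuinely non-formal point is the continuity of $\widehat{a}$, which you settle by producing, for each rational cut $p/q$, the truncation element $((qa-pu)\vee 0)\wedge u\in\Gamma\Xi(A)=A$ whose vanishing set is $\widehat{a}^{-1}([0,p/q])$; this is a clean alternative to invoking Mundici's proof, and the use of Lemma \ref{l:gamma} to compute $\h_{\m}$ on such elements (via the unique unital $\ell$-extension $\Xi(A)\to(\R,1)$) is exactly right. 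Two small points are stated more tersely than they deserve: the final passage from rational intervals to continuity should say explicitly that $\widehat{a}^{-1}([0,c])$ and $\widehat{a}^{-1}([c,1])$ are closed for \emph{every} real $c$ (as intersections of the rational-cut preimages), and that the complements of such intervals form a subbase of the Euclidean topology on $\int$; and in the triangle identity $\widehat{f}(\m_{x})=f(x)$ one should note that evaluation at $x$ descends to a well-defined homomorphism $\C(X)/\m_{x}\to\int$ before applying uniqueness. Neither is a gap, just routine detail to be written out.
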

If $F\dashv G\colon {\sf D}\to{\sf C}$ is any adjunction with unit and counit $\eta$ and $\epsilon$, respectively, write $\fix{\eta}$ for the full subcategory of ${\sf C}$ on the objects \emph{fixed by $\eta$}, that is,
\[
\{O \mid O \text{ is an object of } {\sf C} \text{ and } \eta_{O} \text{ is an isomorphism}\};
\]
similarly, write $\fix{\epsilon}$  for the full subcategory of ${\sf D}$ on the objects fixed by $\epsilon$.
It is then standard  that the adjunction restricts to an equivalence  between $\fix{\eta}$ and $\fix{\epsilon}$. In our situation, the adjunction $\Max\dashv \C$ actually fixes all compact Hausdorff spaces, because of the following fact {\it \`a la} Stone-Kolmogorov (\cite[Theorem 2]{st2},\cite[Theorem I]{GK39}): 
\begin{lemma}[{\cite[4.16]{Mundici11}}]\label{l:epsiloniso}For each space $X$ in $\KH$, the component $\epsilon_{X}\colon X \to \Max{\C(X)}$ of $\epsilon$ at $X$ given by \eqref{eq:epsilon} is a homeomorphism. 
\qed\end{lemma}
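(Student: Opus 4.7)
The plan is to show that $\epsilon_X$ is a continuous bijection; since $X$ is compact and $\Max{\C(X)}$ is Hausdorff by Lemma \ref{MVmax}, $\epsilon_X$ will then automatically be a homeomorphism. Well-definedness and continuity are routine: for each $x \in X$, the set $\m_x := \epsilon_X(x) = \{f \in \C(X) \mid f(x)=0\}$ is the kernel of the surjective MV-homomorphism $\C(X)\to \int$ sending $f$ to $f(x)$, and since the quotient $\C(X)/\m_x \cong \int$ is simple by H\"older's Theorem (Lemma \ref{MVHolder}), $\m_x$ is maximal. Continuity is immediate from the basis description in Lemma \ref{MVmax}: for each $f \in \C(X)$ one has $\epsilon_X^{-1}(\V(f)) = f^{-1}(0)$, which is closed in $X$.

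Injectivity is dispatched by Urysohn's Lemma: if $x\ne y$ in the compact Hausdorff (hence normal) space $X$, pick a continuous $f\colon X \to \int$ with $f(x)=0$ and $f(y)=1$; then $f \in \m_x \setminus \m_y$.

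Surjectivity is the main obstacle, and it is here that topology and MV-algebraic structure must be combined. Given $\m \in \Max{\C(X)}$, I would show that the family of closed sets $\{f^{-1}(0) \mid f \in \m\}$ has non-empty intersection in $X$, and then conclude by the maximality of $\m$. For the finite intersection property, take $f_1,\ldots,f_n \in \m$ and set $g := f_1 \oplus \cdots \oplus f_n$. Since ideals are closed under $\oplus$, $g \in \m$; and since the $f_i$ are non-negative, $g^{-1}(0) = \bigcap_{i=1}^n f_i^{-1}(0)$. If this intersection were empty, then $g$ would be strictly positive on the compact space $X$, hence $g \geq \delta$ uniformly for some $\delta>0$, forcing $ng = 1$ in $\C(X)$ for all $n\ge 1/\delta$ (using the truncation built into $\oplus$). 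But then $1 = ng \in \m$, contradicting the properness of $\m$. Compactness of $X$ therefore yields a point $x \in \bigcap_{f \in \m} f^{-1}(0)$, whence $\m \subseteq \m_x$, and maximality of $\m$ gives $\m = \m_x = \epsilon_X(x)$, completing the argument.
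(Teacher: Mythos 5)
Your proof is correct and is essentially the standard Gelfand--Kolmogorov-style argument that the paper itself does not spell out but delegates to \cite[4.16]{Mundici11}: well-definedness, continuity and injectivity are routine, surjectivity follows from the finite intersection property of the zero-sets $\{f^{-1}(0)\mid f\in\m\}$ together with compactness of $X$, and a continuous bijection from a compact space to a Hausdorff space is automatically a homeomorphism. One cosmetic remark: the maximality of $\m_x$ rests on the simplicity of the MV-algebra $\int$ (an elementary Archimedean fact), not really on Lemma \ref{MVHolder}, which presupposes that the ideal in question is already maximal.
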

By contrast, the main objective of this paper amounts to an equational characterisation of $\fix{\eta}$. Let us first adopt a notation for $\fix{\eta}$ that is more perspicuous in our situation.
\begin{notation}\label{not:mvc}
Let us agree to write $\MV_{\C}$ for the full subcategory of $\MV$ on those objects fixed by $\eta$.
\end{notation}
\begin{corollary}\label{cor:reflection}The following hold.
\begin{enumerate}
\item The adjunction $\Max \dashv \C$ restricts to an equivalence between   $\MV_{\C}$ and $\KH^{\rm op}$.
\item $\MV_{\C}$ is a reflective subcategory of $\MV$, with reflection $\eta$. Moreover, the component $\eta_{A}\colon A\to\C(\Max{A})$ of the reflection is injective if, and only if, $A$ is semisimple.
\end{enumerate} 
\end{corollary}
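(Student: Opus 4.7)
The plan is to deduce both parts of the corollary from the general machinery of adjunctions together with H\"older's theorem as recorded in Lemma \ref{MVHolder}.

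For part (1), I would appeal to the standard categorical fact that any adjunction $F \dashv G$ restricts to an equivalence between the full subcategories of objects fixed by the unit and the objects fixed by the counit. With Notation \ref{not:mvc}, we have $\MV_{\C} = \fix{\eta}$ by definition, while Lemma \ref{l:epsiloniso} asserts that every component $\epsilon_{X}$ is an isomorphism, so $\fix{\epsilon} = \KH^{\rm op}$. The equivalence stated in part (1) is then immediate from the restriction of $\Max \dashv \C$ in Theorem \ref{t:Max-C-adj}.

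For the first assertion of part (2) --- that $\MV_{\C}$ is reflective in $\MV$ with reflector given by $\eta$ --- the main thing to verify is that $\C(\Max A)$ belongs to $\MV_{\C}$ for every MV-algebra $A$. I would read this off the triangle identity $\C(\epsilon_{X})\circ\eta_{\C(X)} = 1_{\C(X)}$: since $\epsilon_{X}$ is an isomorphism by Lemma \ref{l:epsiloniso}, so is $\C(\epsilon_{X})$, and hence $\eta_{\C(X)}$ is an isomorphism as well. Applying this to $X=\Max A$ shows that $\C(\Max A) \in \MV_{\C}$. The universal property of $\eta_{A}$ then follows in the standard way: given $f\colon A \to B$ with $B \in \MV_{\C}$, the arrow $g := \eta_{B}^{-1}\circ \C(\Max f)\colon \C(\Max A)\to B$ satisfies $g\circ\eta_{A} = f$ by naturality of $\eta$, and uniqueness is forced by the invertibility of $\eta_{B}$.

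For the injectivity criterion, the key is an explicit computation of $\ker\eta_{A}$. By the formula \eqref{eq:eta}, an element $a\in A$ lies in the kernel precisely when $\widehat{a}(\m) = \h_{\m}(a/\m) = 0$ for every $\m\in\Max A$. H\"older's theorem (Lemma \ref{MVHolder}) tells us that each $\h_{\m}$ is \emph{injective}, so this condition is equivalent to $a/\m = 0$, i.e.\ to $a\in\m$, for every maximal ideal $\m$. Thus $\ker\eta_{A} = \bigcap\Max A = \rad A$, and since $\eta_{A}$ is an MV-homomorphism, it is injective if and only if $\rad A = \{0\}$, i.e.\ $A$ is semisimple.

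I do not anticipate any serious obstacle: the first part is a formal consequence of Lemma \ref{l:epsiloniso} and general adjunction theory, and the injectivity statement reduces immediately to the non-trivial content already encapsulated in Lemma \ref{MVHolder}. The only step that deserves slight care is writing down the universal property of $\eta$ as a reflection, but even there the argument is the textbook one.
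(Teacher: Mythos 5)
Your proposal is correct and follows essentially the paper's own route: part (1) via the fixed points of the unit and counit together with Lemma \ref{l:epsiloniso}, reflectivity of $\MV_{\C}$ as the standard consequence of the counit being an isomorphism (where the paper simply cites \cite[Lemma 2.11]{PT91}), and the injectivity criterion by computing $\ker\eta_{A}=\rad{A}$ from the injectivity of the H\"older maps $\h_{\m}$, which is precisely the content of the reference \cite[4.16.iii]{Mundici11} and of the paper's parenthetical remark about the homomorphisms $A\to\int$ forming a monosource. The only spot to tidy up in a full write-up is the uniqueness clause of the reflection, which needs naturality of $\eta$ (or full faithfulness of $\C$) in addition to the invertibility of $\eta_{B}$, but that is the textbook argument you allude to.
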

\begin{proof} By Lemma \ref{l:epsiloniso}, the counit $\epsilon$ fixes all spaces in $\KH^{\rm op}$. On the other hand, the unit $\eta$ fixes precisely $\MV_{\C}$ by the definition of the latter. This proves the first 
item. The proof of the first statement in the second item is an easy consequence of  Lemma \ref{l:epsiloniso}, cf.\ \cite[Lemma 2.11]{PT91}. The second statement in the second item is proved in \cite[4.16.iii]{Mundici11}. (Let us note that with Lemma \ref{MVHolder} available, the semisimplicity of $A$ is equivalent to  the fact that the cone of all homomorphisms $A\to \int$ is a monosource, i.e.\ is jointly monic, and then the statement at hand follows from \cite[Lemma 2.3]{PT91}.)
\end{proof}
\begin{remark}\label{r:factorisation}The reflection in Corollary \ref{cor:reflection}.(2) factors into three further reflections that clarify the gap 
between a general MV-algebra and one in the subcategory $\MV_{\C}$. We describe this factorisation here, omitting proofs and details.   Pick an MV-algebra $A$ in $\MV$. The natural quotient 
map $$A \longrightarrow \tfrac{A}{\rad{A}}$$ is the universal homomorphism from $A$ to a semisimple MV-algebra. The associated reflection is to be 
thought of as annihilation of infinitesimals. There is a further universal arrow 
$$\tfrac{A}{\rad{A}}\longrightarrow \left(\tfrac{A}{\rad{A}}\right)^{\rm d}$$ that embeds a semisimple MV-algebra into its divisible hull. The 
divisible hull can be 
defined, for any MV-algebra $B$, as $\Gamma(\Xi(B)^{\rm d})$, where  $\Xi(B)^{\rm d}$ is the classical divisible hull of a lattice-ordered Abelian 
group. It is also classical that the unit of any unital semisimple Abelian lattice-group determines a norm on it.\footnote{Divisibility and 
completeness in the unit norm  of lattice-groups are already used by Stone in \cite{st2}. Yosida \cite{Yosida41} 
only uses completeness in the unit norm, because his setting is that of vector lattices (=lattice-ordered real linear spaces) which, as lattice-groups, are 
automatically divisible. It is useful to compare the Stone-Yosida approach, from which ours descends, to the one by Kakutani 
\cite{Kakutani40, Kakutani41} and by the Krein brothers \cite{KreinKrein40}. In the former approach, the norm is derived from the algebraic structure; in the latter,  the 
norm is primitive.}  Hence any semisimple 
MV-algebra carries a \emph{unit norm} uniquely determined by its algebraic 
structure, and there is a universal arrow 
$$\left(\tfrac{A}{\rad{A}}\right)^{\rm d}\longrightarrow \left(\left(\tfrac{A}{\rad{A}}\right)^{\rm d}\right)^{\rm c}$$
given by Cauchy completion in the unit norm. Finally, the composition
$$A\longrightarrow \left(\left(\tfrac{A}{\rad{A}}\right)^{\rm d}\right)^{\rm c}$$
amounts to the homomorphism $\eta_A\colon A\to \C(\Max{A})$ as defined in \eqref{eq:eta}, because there is a natural isomorphism $\C(\Max{A})\cong \left(\left(\frac{A}{\rad{A}}\right)^{\rm d}\right)^{\rm c}$. The existence of this isomorphism is due to the Stone-Weierstrass Theorem for MV-algebras, to be proved in 
Lemma \ref{l:mvsw}, together with the  observation that there are  homeomorphisms $\Max{A}\cong \Max{\frac{A}{\rad{A}}}\cong  \Max{\left(\frac{A}{\rad{A}}\right)^{\rm d}}$.
\end{remark}
\section{Elementary results on $\delta$-algebras.}\label{s:delta}
We now introduce $\delta$-algebras, a  notion that will afford an equational characterisation of the full subcategory $\MV_{\C}$  of $\MV$ defined in the previous section.
To this aim we consider operation symbols $\delta,\oplus,\neg,0$, where $\delta$ has arity $\aleph_0$, $\oplus$ is binary, $\neg$ is unary, and $0$ is a constant.
\begin{notation}
The operation $\delta$ takes as argument a countable sequence of terms. We write $\vec{x},\vec{y}$ and $\vec{0}$ as a shorthand for $x_1,x_2,\ldots$, $y_1,y_2,\ldots$, and $0,0,\ldots$, respectively. 
\end{notation}
Let us define a unary operation $\fm$ by setting \[\fm(x):= \delta(x,\vec{0}).\]
In the definition below, the operations $\d$ and $\ominus$ denote Chang's distance and truncated subtraction on an MV-algebra, respectively. Please see Section \ref{s:pre} for details.
\begin{definition}\label{d:delta}
 A \emph{$\delta$-algebra} is an algebra $(A,\delta,\oplus,\neg,0)$ such that $(A,\oplus,\neg,0)$ is an MV-algebra, and the following identities are satisfied. 
\begin{description}
 \item[A1] $\d\left(\delta(\vec{x}),\delta(x_1,\vec{0})\right)=\delta(0,x_2,x_3,\ldots)$.
 \item[A2] $\fm(\delta(\vec{x}))=\delta(\fm(x_1),\fm(x_2),\ldots)$.
 \item[A3] $\delta\left(x,x,\ldots\right)=x$.
 \item[A4] $\delta(0,\vec{x})=\fm(\delta(\vec{x}))$.
 \item[A5] $\delta(x_1\oplus y_1,x_2\oplus y_2,\ldots)\geq \delta(x_1,x_2,\ldots)$.
 \item[A6] $\fm(x\ominus y)=\fm(x)\ominus\fm(y)$.
\end{description}
\end{definition}
\begin{remark}
Since every MV-algebra has a lattice reduct (see Lemma \ref{mv1}), Axiom \textbf{A5} can be  written in  equational form in the obvious manner.
\end{remark}
A homomorphism $h\colon A\to B$ of $\delta$-algebras is a homomorphism of the underlying MV-algebras that also preserves $\delta$. The latter condition means, as usual, that for all $\{x_i\}_{i\in\N}\subseteq A$,
\[
h(\delta(\vec{x}))=\delta(h(x_1),h(x_2),\ldots).
\] 
Homomorphisms of $\delta$-algebras will be called  \emph{$\delta$-homomorphisms}.
\begin{definition}\label{d:deltacat}
We let $\Va$ denote the category whose objects are $\delta$-algebras and whose morphisms are $\delta$-homomorphisms.
\end{definition}
\begin{remark}\label{r:reduct}There is a forgetful functor $\forg{-} \colon \Va\to \MV$ that sends a $\delta$-algebra to its underlying MV-algebra. There is no obvious reason why this functor should be full. We will eventually show that this is the case in Theorem \ref{thm:fullness}.
\end{remark}
Recall from Example \ref{ex:c(x)mv} the standard example of MV-algebra, $\int$. It is an exercise to check that the MV-algebra $\int$ becomes a $\delta$-algebra when equipped with   the additional operation $\delta(u_{1},u_{2},\ldots):=\sum_{i=1}^{\infty}\frac{u_i}{2^i}$. Since this operation is continuous, we obtain:
\begin{lemma}\label{CofX}
 Given any compact Hausdorff space $X\neq \emptyset$, the MV-algebra $\C(X)$ is a $\delta$-algebra if, for all $\{g_i\}_{i\in\N}\subseteq \C(X)$,  $\delta$ is defined as \[\delta(g_{1},g_{2},\ldots):=\sum_{i=1}^{\infty}\frac{g_i}{2^i}.\]
\end{lemma}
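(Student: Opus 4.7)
The plan is to reduce everything to a pointwise verification in the standard MV-algebra $\int$, which itself becomes a $\delta$-algebra under the series $\delta_{\int}(u_1,u_2,\ldots):=\sum_{i\geq 1}u_i/2^i$ (as observed just before the lemma statement). Since all the MV-algebra operations on $\C(X)$ are defined pointwise, the same will be true of $\delta$, and hence every identity in the signature that holds in $\int$ will automatically hold in $\C(X)$.

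First I would check that $\delta$ is a well-defined operation $\C(X)^{\aleph_0}\to \C(X)$. Given $\{g_i\}_{i\in\N}\subseteq \C(X)$, each summand satisfies $\|g_i/2^i\|_\infty\leq 1/2^i$; the Weierstrass M-test then gives uniform convergence of $\sum g_i/2^i$ on $X$, so the sum is a continuous $\int$-valued function on $X$. Moreover, evaluating at any $x\in X$ yields
\[
\delta(g_1,g_2,\ldots)(x)=\sum_{i=1}^\infty \frac{g_i(x)}{2^i}=\delta_{\int}(g_1(x),g_2(x),\ldots),
\]
so $\delta$ really is the pointwise lift of $\delta_{\int}$.

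It remains to verify A1--A6 in $\int$, which reduces to short real-number calculations. Note that here $\fm(u)=u/2$. For A1, one computes $\d(\sum u_i/2^i,\,u_1/2)=\sum_{i\geq 2}u_i/2^i$, which equals $\delta_{\int}(0,u_2,u_3,\ldots)$. For A2 and A4, both sides collapse to $\sum u_i/2^{i+1}$. A3 is the geometric identity $x\sum_{i\geq 1}2^{-i}=x$. A5 is immediate from termwise monotonicity of a nonnegative convergent series combined with $u\oplus v\geq u$. A6 reduces to the equality $\max((x-y)/2,0)=\max(x/2-y/2,0)$.

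There is no genuine obstacle; the only care needed is the semantic point that although $\delta$ is a topological limit (uniform convergence of a series), this limit is computed pointwise, which is precisely what allows the equational axioms to transfer from $\int$ to $\C(X)$.
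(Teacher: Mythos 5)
Your proposal is correct and follows essentially the same route as the paper, which notes that $\int$ with $\delta_{\int}(u_1,u_2,\ldots)=\sum_{i\geq 1}u_i/2^i$ is a $\delta$-algebra, that this operation is continuous (so $\C(X)$ is closed under the pointwise lift), and then dismisses the axiom check as a straightforward verification. Your pointwise computations of \textbf{A1}--\textbf{A6} in $\int$ and the M-test argument for well-definedness supply exactly the details the paper leaves to the reader.
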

\begin{proof}
 Straightforward verification.
\end{proof}
\noindent (Note that if $X=\emptyset$ then $\C(X)$ is a singleton, and thus has a unique structure of $\delta$-algebra.)
\begin{remark}\label{r:struct}For the rest of this paper, whenever we regard $\C(X)$ as a $\delta$-algebra, for $X$ a compact Hausdorff space, we always assume that the operation $\delta$ is the one provided by Lemma \ref{CofX}. (We shall eventually prove in Corollary \ref{cor:unique}, that there actually is no other $\delta$-algebraic structure expanding the MV-algebra $\C(X)$.) 
\end{remark}
We now embark on the proof of several elementary algebraic results on $\delta$-algebras. These  provide the ground on which the representation theory of the later sections rests.

Recall the unary derived operation $\fm$. We  define a further unary operation $\fmn$ for each   $n\in\N$ by iterating $\fm$, namely: 
\[\fmn(x):=\underbrace{\fm(\cdots(\fm}_{n \, \text{times}}(x))\cdots).\]
\begin{lemma}\label{lem:1}
 The following equations hold in an arbitrary $\delta$-algebra, for each $n\in\N$.
 \begin{enumerate}
 \item $\fmn(\delta(\vec{x}))=\delta(\fmn(x_1),\fmn(x_2),\ldots)$.
 \item  $\fmn(\delta(\vec{x}))=\delta(\underbrace{0,\ldots,0}_{n \, \text{times}},\vec{x})$.
 \item $\delta(\vec{x})\geq\delta(x_1,\ldots,x_n,\vec{0})$.
 \item $\delta(\vec{x})=\delta(x_1,\vec{0})\oplus\delta(0,x_2,x_3,\ldots)$.
  \item $\fmn(x\ominus y)=\fmn(x)\ominus\fmn(y)$.
  \item $\neg\fm(1)=\fm(1)$.
   \item $\fmn(x)\leq x$.
 \item If $x\leq y$ then $\fmn(x)\leq \fmn(y)$.
 \item $\fm(x)\odot\fm(x)=0$.
 \end{enumerate}
\end{lemma}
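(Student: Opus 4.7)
Most of the nine identities are routine inductions on $n$ or immediate consequences of item (3). Specifically, (1), (2), and (5) are established by straightforward induction whose base cases are A2, A4, and A6 respectively; in each inductive step one applies the relevant axiom once more to the inner $\delta$-expression. (The obvious fact $\fm(0)=\delta(0,\vec{0})=0$ follows from A3.) For (3), I would invoke A5 with ``base'' sequence $(x_{1},\ldots,x_{n},0,0,\ldots)$ and summand $(y_{i})$ defined by $y_{i}:=0$ for $i\leq n$ and $y_{i}:=x_{i}$ for $i>n$; the coordinatewise $\oplus$-sums reduce to $x_{i}$ throughout, so A5 yields precisely (3).

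Items (4), (7), and (8) are short consequences of what has already been established. For (7): by (3) and A3, $\fm(x)=\delta(x,0,0,\ldots)\leq \delta(x,x,\ldots)=x$, and one iterates. For (8): if $x\leq y$, then A6 gives $\fm(x)\ominus\fm(y)=\fm(x\ominus y)=\fm(0)=0$, so $\fm(x)\leq\fm(y)$ by Lemma \ref{mv1}; iterate for $\fmn$. For (4): since $\delta(\vec{x})\geq\fm(x_{1})$ by (3), Chang's distance reduces to truncated subtraction, and A1 yields $\delta(\vec{x})\ominus\fm(x_{1})=\delta(0,x_{2},x_{3},\ldots)$; Lemma \ref{mv3} then gives the desired decomposition.

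The main obstacle lies in (6). I would specialise A1 to the constant sequence with $x_{i}=1$ for every $i$. By A3, the left-hand side of A1 collapses to $\d(1,\fm(1))$, and the right-hand side becomes $\delta(0,1,1,\ldots)=\fm(\delta(1,1,\ldots))=\fm(1)$ via A4 and A3. Since $\fm(1)\leq 1$, Chang's distance simplifies: $\d(1,\fm(1))=(1\ominus\fm(1))\oplus 0=\neg\fm(1)$. Combining, $\neg\fm(1)=\fm(1)$.

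Finally, (9) falls out of (6) and (8). At $x=1$, $\fm(1)\odot\fm(1)=\fm(1)\odot\neg\fm(1)=0$ by the standard MV-algebra identity. For general $x$, part (8) yields $\fm(x)\leq\fm(1)$, so Lemma \ref{mv2} delivers $\fm(x)\odot\fm(x)\leq\fm(1)\odot\fm(1)=0$.
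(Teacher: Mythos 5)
Your proof is correct and takes essentially the same route as the paper's: the same inductions for (1), (2), (5), the same use of \textbf{A5} for (3), of Lemma \ref{mv3} together with \textbf{A1} for (4), the specialisation of \textbf{A1} at the constant-$1$ sequence for (6), and the reduction of (9) to (6) via (8) and Lemma \ref{mv2}. The only cosmetic difference is that you obtain the monotonicity in (8) from \textbf{A6} and Lemma \ref{mv1} (using $\fm(0)=0$) instead of directly from \textbf{A5}, which is equally valid.
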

\begin{proof} Items 1--2 are proved by a straightforward induction on $n$ using Axioms \textbf{A2} and \textbf{A4}, respectively.  
Item 3 follows at once from  \textbf{A5} upon defining $y_1:=0,\ldots,y_n:=0$, and $y_i:=x_i$ for all $i>n$.  For item 4, note $\delta(\vec{x})\geq\delta(x_1,\vec{0})$ by item 3. Hence, \begin{align*}
\delta(\vec{x})&=\delta(x_1,\vec{0})\oplus \d\left(\delta(\vec{x}),\delta(x_1,\vec{0})\right) \tag{Lemma \ref{mv3}}\\
&=\delta(x_1,\vec{0})\oplus \delta(0,x_2,x_3,\ldots). \tag{\textbf{A1}} \end{align*}

Item 5 is Axiom \textbf{A6} if $n=1$. If $n>1$, \begin{align*}
\fmn(x\ominus y)&=\fm(\fmnn(x\ominus y)) \\
&=\fm(\fmnn(x)\ominus \fmnn(y)) \tag{inductive hypothesis} \\
&=\fm(\fmnn(x))\ominus \fm(\fmnn(y)) \tag{\textbf{A6}} \\
&=\fmn(x)\ominus \fmn(y).
\end{align*}

For item 6, recall that $1:= \neg 0$, and that the equation $1\ominus x=\neg x$  holds in any MV-algebra. Hence, 
\begin{align*}
\neg\fm(1)&=1\ominus\fm(1) \\
&=\delta(1,1,\ldots)\ominus\delta(1,\vec{0}) \tag{\textbf{A3}} \\
&=\d\left(\delta(1,1,\ldots),\delta(1,\vec{0})\right) \tag{Lemmas \ref{lem:1}.(3), \ref{mv1}} \\
&=\delta(0,1,1,\ldots) \tag{\textbf{A1}} \\
&=\fm(\delta(1,1,\ldots)) \tag{\textbf{A4}} \\
&=\fm(1). \tag{\textbf{A3}}                                                                                                                    
\end{align*}

For item 7, note that
 \[\fm(x)=\delta(x,\vec{0})\leq \delta(x,x,\ldots)=x. \tag{\textbf{A3}, \textbf{A5}}\]
 The assertion for $n$ follows at once by induction.
 
 To prove item 8, suppose $x\leq y$. Axiom \textbf{A5} entails \[\fm(x)=\delta(x,\vec{0})\leq\delta(y,\vec{0})=\fm(y).\] Again, the assertion for $n$ is immediate by induction.
 
 For the final item, by item 8 and Lemma \ref{mv2} we have
  \begin{align*}
 \fm(x)\odot\fm(x)\leq \fm(1)\odot \fm(1).
 \end{align*}
 Using  item 6, we further have:
 \begin{align*}
 \fm(1)\odot \fm(1) &= \neg(\neg\fm(1)\oplus \fm(1))=\neg 1 = 0.
 \end{align*}
It follows that $\fm(x)\odot\fm(x)=0$, and the proof is complete.  
\end{proof}
\begin{lemma}\label{p1}
 The following equations hold in an arbitrary $\delta$-algebra, for each $n\in\N$.
\begin{enumerate}
 \item $\bigoplus_{i=1}^n \delta(0,\ldots,0,x_i,\vec{0})=\delta(x_1,\ldots,x_n,\vec{0})$.
 \item $\delta(\vec{x})=\delta(x_1,\ldots,x_n,\vec{0})\oplus\delta(0,\ldots,0,x_{n+1},x_{n+2},\ldots)$.
\end{enumerate} 
\end{lemma}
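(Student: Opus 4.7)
The plan is to establish both identities by induction on $n$, proving part 1 first and then part 2.

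For part 1, the base case $n=1$ is trivial. For the inductive step, Lemma \ref{lem:1}.(4) splits
\[\delta(x_1,\ldots,x_{n+1},\vec 0)=\delta(x_1,\vec 0)\oplus\delta(0,x_2,\ldots,x_{n+1},\vec 0),\]
Lemma \ref{lem:1}.(2) rewrites the second summand as $\fm(\delta(x_2,\ldots,x_{n+1},\vec 0))$, and Lemma \ref{lem:1}.(1) together with $\fm(0)=0$ (which follows from \textbf{A3} and \textbf{A4}) pushes $\fm$ inside to give $\delta(\fm(x_2),\ldots,\fm(x_{n+1}),\vec 0)$. Applying the inductive hypothesis to this shifted sequence yields a sum whose $i$th term $\delta(\underbrace{0,\ldots,0}_{i-1},\fm(x_{i+1}),\vec 0)$ equals $\fm^{i+1}(x_{i+1})=\delta(\underbrace{0,\ldots,0}_{i},x_{i+1},\vec 0)$ by Lemma \ref{lem:1}.(2); a reindex then gives the claim.

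For part 2, the base case $n=1$ is exactly Lemma \ref{lem:1}.(4). For the inductive step, set $T_k:=\delta(\underbrace{0,\ldots,0}_k,x_{k+1},x_{k+2},\ldots)$. The inductive hypothesis $\delta(\vec x)=\delta(x_1,\ldots,x_n,\vec 0)\oplus T_n$ reduces the task to establishing the splitting $T_n=\delta(\underbrace{0,\ldots,0}_n,x_{n+1},\vec 0)\oplus T_{n+1}$ and then combining the middle term with $\delta(x_1,\ldots,x_n,\vec 0)$ via part 1. To obtain this splitting, apply axiom \textbf{A1} to the sequence $(x_{n+1},x_{n+2},\ldots)$ and use Lemma \ref{lem:1}.(3) together with Lemma \ref{mv1} (which turns $\d$ into $\ominus$ on ordered pairs) to get
\[\delta(x_{n+1},x_{n+2},\ldots)\ominus\delta(x_{n+1},\vec 0)=\delta(0,x_{n+2},x_{n+3},\ldots).\]
Apply $\fm^n$ using Lemma \ref{lem:1}.(5), and rewrite each term via Lemma \ref{lem:1}.(2), to obtain $T_n\ominus\delta(\underbrace{0,\ldots,0}_n,x_{n+1},\vec 0)=T_{n+1}$. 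Since Lemmas \ref{lem:1}.(3) and (8) yield $T_n\geq\delta(\underbrace{0,\ldots,0}_n,x_{n+1},\vec 0)$, Lemma \ref{mv3} converts this $\ominus$-identity into the required $\oplus$-splitting.

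The main obstacle is that $\fm$ does not distribute over $\oplus$ in general, which blocks the naive strategy of applying $\fm^n$ termwise to an $\oplus$-sum. The workaround is to pass to $\ominus$-form, where Lemma \ref{lem:1}.(5) supplies the required distributivity, and to convert back via Lemma \ref{mv3} only at the very end.
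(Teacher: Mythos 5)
Your proof is correct, and it differs from the paper's in an instructive way, mainly in item 2. For item 1 the difference is organisational: you run a single induction that splits off the first coordinate via Lemma \ref{lem:1}.(4), factors one $\fm$ out of the tail with Lemma \ref{lem:1}.(1)--(2) (using $\fm(0)=0$, which indeed follows from \textbf{A3}), applies the inductive hypothesis and reindexes; the paper instead first proves the auxiliary identity \eqref{auxeq1} by accumulating summands from the tail end. Both arguments use exactly the toolkit of Lemma \ref{lem:1}.(1), (2), (4), and yours is arguably tidier. For item 2 the routes genuinely diverge. The paper never leaves $\oplus$: starting from the right-hand side it peels off the $n$-th finite summand using item 1, pushes $\fmnn$ \emph{inside} $\delta$ by Lemma \ref{lem:1}.(1), merges the two tail pieces with Lemma \ref{lem:1}.(4), pulls $\fmnn$ back out, and invokes the inductive hypothesis --- no order, no $\ominus$, no direct appeal to \textbf{A1} or \textbf{A6}. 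You instead obtain the shifted splitting $\delta(0,\ldots,0,x_{n+1},x_{n+2},\ldots)=\delta(0,\ldots,0,x_{n+1},\vec{0})\oplus\delta(0,\ldots,0,x_{n+2},x_{n+3},\ldots)$ by taking the $\ominus$-form of the two-term split (\textbf{A1} together with Lemma \ref{lem:1}.(3) and Lemma \ref{mv1}), distributing $\fmn$ over $\ominus$ via Lemma \ref{lem:1}.(5), and converting back to $\oplus$ with Lemma \ref{mv3}; all steps check, and everything you cite precedes Lemma \ref{p1}, so there is no circularity. Your closing remark is also accurate: $\fm$ does not distribute over $\oplus$ (already in $\int$ one has $\fm(1\oplus 1)=\tfrac{1}{2}\neq 1=\fm(1)\oplus\fm(1)$), and your workaround is the $\ominus$-distributivity of \textbf{A6}/Lemma \ref{lem:1}.(5), whereas the paper's workaround is that iterates of $\fm$ can always be pushed inside $\delta$ itself, so the issue never arises. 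The trade-off: the paper's item 2 is a single purely equational computation, while your version isolates the order-theoretic content and makes explicit what \textbf{A6} is for.
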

\begin{proof} 
In order to prove item 1, observe that the equation holds trivially for $n=1$, and it is true if $n=2$, by Lemma \ref{lem:1}.(4). Assuming that $n>2$ we show that, for all $k=1,\ldots,n-2$,
	\begin{align}\label{auxeq1}
	\bigoplus_{i=n-k}^n \delta(0,\ldots,0,x_i,\vec{0})=\delta(0,\ldots,0,x_{n-k},\ldots,x_n,\vec{0}).
	\end{align}
	If $k=1$, then 
	\begin{align*}
\bigoplus_{i=n-1}^n \delta(0,\ldots,0,x_i,\vec{0})&= \delta(0,\ldots,0,x_{n-1},\vec{0})\oplus \delta(0,\ldots,0,x_n,\vec{0}) \\
&= f_{\frac{1}{2^{n-2}}}(\delta(x_{n-1},\vec{0}))\oplus f_{\frac{1}{2^{n-2}}}(\delta(0,x_n,\vec{0})) \tag{Lemma \ref{lem:1}.(2)} \\
&=\delta(f_{\frac{1}{2^{n-2}}}(x_{n-1}),\vec{0})\oplus\delta(0,f_{\frac{1}{2^{n-2}}}(x_n),\vec{0}) \tag{Lemma \ref{lem:1}.(1)} \\
&=\delta(f_{\frac{1}{2^{n-2}}}(x_{n-1}),f_{\frac{1}{2^{n-2}}}(x_n),\vec{0}) \tag{Lemma \ref{lem:1}.(4)} \\
&=f_{\frac{1}{2^{n-2}}}(\delta(x_{n-1},x_n,\vec{0})) \tag{Lemma \ref{lem:1}.(1)} \\
&=\delta(0,\ldots,0,x_{n-1},x_n,\vec{0}). \tag{Lemma \ref{lem:1}.(2)} \end{align*}
Now suppose \eqref{auxeq1} is true for $1\leq k<n-1$. We prove that it is true for $k+1$. 
\begin{align*}
\bigoplus_{i=n-(k+1)}^n \delta(0,&\ldots,0,x_i,\vec{0})=\delta(0,\ldots,0,x_{n-k-1},\vec{0})\oplus \bigoplus_{i=n-k}^n \delta(0,\ldots,0,x_i,\vec{0}) \\
&=\delta(0,\ldots,0,x_{n-k-1},\vec{0})\oplus \delta(0,\ldots,0,x_{n-k},\ldots,x_n,\vec{0})\tag{inductive hypothesis} \\
&=f_{\frac{1}{2^{n-k-2}}}(\delta(x_{n-k-1},\vec{0}))\oplus f_{\frac{1}{2^{n-k-2}}}(\delta(0,x_{n-k},\ldots,x_n,\vec{0})) \tag{Lemma \ref{lem:1}.(2)} \\
&=\delta(f_{\frac{1}{2^{n-k-2}}}(x_{n-k-1}),\vec{0})\oplus \delta(0,f_{\frac{1}{2^{n-k-2}}}(x_{n-k}),\ldots,f_{\frac{1}{2^{n-k-2}}}(x_n),\vec{0}) \tag{Lemma \ref{lem:1}.(1)} \\
&=\delta(f_{\frac{1}{2^{n-k-2}}}(x_{n-k-1}),\ldots,f_{\frac{1}{2^{n-k-2}}}(x_n),\vec{0}) \tag{Lemma \ref{lem:1}.(4)} \\
&=f_{\frac{1}{2^{n-k-2}}}(\delta(x_{n-k-1},\ldots,x_n,\vec{0})) \tag{Lemma \ref{lem:1}.(1)} \\
&=\delta(0,\ldots,0,x_{n-k-1},\ldots,x_n,\vec{0}). \tag{Lemma \ref{lem:1}.(2)}
\end{align*} 
Applying \eqref{auxeq1} with $k=n-2$,
\begin{align*}
\bigoplus_{i=1}^n \delta(0,\ldots,0,x_i,\vec{0})&=\delta(x_1,\vec{0})\oplus \bigoplus_{i=2}^n \delta(0,\ldots,0,x_i,\vec{0}) \\
&=\delta(x_1,\vec{0})\oplus \delta(0,x_2,\ldots,x_n,\vec{0}) \\
&=\delta(x_1,\ldots,x_n,\vec{0}), \tag{Lemma \ref{lem:1}.(4)} 
\end{align*} so that item 1 is proved.

  For item 2, we proceed by induction on $n\in\N$. The identity holds for $n=1$ by Lemma $\ref{lem:1}.(4)$. Hence, let $n>1$. The proof is completed by the following computation.
  \begin{align*}
&\delta(x_1,\ldots,x_n,\vec{0})\oplus\delta(0,\ldots,0,x_{n+1},x_{n+2},\ldots)= \\
= & \ \bigoplus_{i=1}^{n-1} \delta(0,\ldots,0,x_i,\vec{0})\oplus \delta(0,\ldots,0,x_n,\vec{0})\oplus\delta(0,\ldots,0,x_{n+1},x_{n+2},\ldots) \tag{Lemma \ref{p1}.(1)}\\
=& \ \delta(x_1,\ldots,x_{n-1},\vec{0})\oplus\fmnn(\delta(x_n,\vec{0}))\oplus\fmnn(\delta(0,x_{n+1},x_{n+2},\ldots)) \tag{Lemmas \ref{p1}.(1), \ref{lem:1}.(2)} \\
=& \ \delta(x_1,\ldots,x_{n-1},\vec{0})\oplus \delta(\fmnn(x_n),\vec{0})\oplus\delta(0,\fmnn(x_{n+1}),\fmnn(x_{n+2}),\ldots) \tag{Lemma \ref{lem:1}.(1)} \\
=& \ \delta(x_1,\ldots,x_{n-1},\vec{0})\oplus \delta(\fmnn(x_n),\fmnn(x_{n+1}),\ldots) \tag{Lemma \ref{lem:1}.(4)} \\
=& \ \delta(x_1,\ldots,x_{n-1},\vec{0})\oplus \fmnn(\delta(x_n,x_{n+1},\ldots)) \tag{Lemma \ref{lem:1}.(1)} \\
=& \ \delta(x_1,\ldots,x_{n-1},\vec{0})\oplus \delta(0,\ldots,0,x_n,x_{n+1},\ldots) \tag{Lemma \ref{lem:1}.(2)} \\
=& \ \delta(\vec{x}). \tag{inductive hypothesis}
\end{align*}
\end{proof}
\begin{corollary}\label{cor:new}The following equations hold in an arbitrary $\delta$-algebra, for each $n\in\N$.
\begin{enumerate}
 \item $\delta(x_1,\ldots,x_n,\vec{0})=\bigoplus_{i=1}^{n}\fmi(x_i)$.
 \item $\delta(x,y,y,\ldots)=\fm(x)\oplus\fm(y)$.
 \item $\fm(x)\oplus\fm(x)=x$.
\end{enumerate} 
\end{corollary}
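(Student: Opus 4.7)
The plan is to derive all three identities as short consequences of the preceding lemmas, essentially without reopening the inductive machinery of Lemma \ref{p1}. I would treat the items in the order 1, 2, 3, since item 3 will fall out of item 2 by specialisation.

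For item 1, the natural move is to combine Lemma \ref{p1}.(1) with Lemma \ref{lem:1}.(2). Observe that the $i^{\text{th}}$ summand on the left of Lemma \ref{p1}.(1), namely $\delta(0,\ldots,0,x_i,\vec{0})$ with $i-1$ leading zeros, equals $f_{\frac{1}{2^{i-1}}}(\delta(x_i,\vec{0}))$ by Lemma \ref{lem:1}.(2); and $\delta(x_i,\vec{0}) = \fm(x_i)$ by the definition of $\fm$, so the summand simplifies to $f_{\frac{1}{2^{i-1}}}(\fm(x_i)) = \fmi(x_i)$. Feeding this back into Lemma \ref{p1}.(1) yields the desired equation.

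For item 2, apply Lemma \ref{lem:1}.(4) with $x_1 := x$ and $x_i := y$ for $i\geq 2$ to split
\[
\delta(x,y,y,\ldots)=\delta(x,\vec{0})\oplus\delta(0,y,y,\ldots).
\]
The first term is $\fm(x)$ by definition. For the second, Axiom \textbf{A4} gives $\delta(0,y,y,\ldots)=\fm(\delta(y,y,\ldots))$, and Axiom \textbf{A3} simplifies $\delta(y,y,\ldots)$ to $y$, so the second term is $\fm(y)$. Combining, $\delta(x,y,y,\ldots)=\fm(x)\oplus\fm(y)$.

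Item 3 is then the special case $x=y$ of item 2: the left-hand side becomes $\delta(x,x,x,\ldots)$, which equals $x$ by Axiom \textbf{A3}, while the right-hand side is $\fm(x)\oplus\fm(x)$. There is no genuine obstacle here; the only subtlety is the bookkeeping of zeros in item 1, where one must be careful that the $i^{\text{th}}$ summand has exactly $i-1$ leading zeros so that Lemma \ref{lem:1}.(2) produces $f_{\frac{1}{2^{i-1}}}$ rather than $\fmi$ before composing with $\fm$.
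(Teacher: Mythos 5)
Your proposal is correct and matches the paper's own proof essentially step for step: item 1 via Lemma \ref{p1}.(1) combined with Lemma \ref{lem:1}.(2), item 2 via Lemma \ref{lem:1}.(4) followed by Axioms \textbf{A4} and \textbf{A3}, and item 3 as the instance $x=y$ of item 2 together with \textbf{A3}. Nothing further is needed.
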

\begin{proof}
For item 1, note that \begin{align*}
   \delta(x_1,\ldots,x_n,\vec{0})&=\bigoplus_{i=1}^n \delta(0,\ldots,0,x_i,\vec{0}) \tag{Lemma \ref{p1}.(1)} \\
   &=\bigoplus_{i=1}^n  \fmi(x_i). \tag{Lemma \ref{lem:1}.(2)} \\
  \end{align*} 
 Item 2 is easily proved in the following way. \begin{align*}
        \delta(x,y,y,\ldots)&=\delta(x,\vec{0})\oplus\delta(0,y,y,\ldots) \tag{Lemma \ref{lem:1}.(4)} \\
        &=\fm(x) \oplus \fm(\delta(y,y,\ldots)) \tag{\textbf{A4}} \\
        &=\fm(x) \oplus \fm(y). \tag{\textbf{A3}}
        \end{align*}
 Finally, item 3 follows by an easy computation: \begin{align*}
\fm(x)\oplus\fm(x)&=\delta(x,x,\ldots) \tag{Corollary \ref{cor:new}.(2)} \\
&=x. \tag{\textbf{A3}}
\end{align*}
\end{proof}
\section{Every $\delta$-algebra is semisimple as an MV-algebra.}\label{s:semisimple}
Recall from Remark \ref{r:reduct} the forgetful functor $\forg{-} \colon \Va\to \MV$ that sends a $\delta$-algebra to its underlying MV-algebra. Our aim in this section is to show that for any $\delta$-algebra $A$, the MV-algebra $\forg{A}$ is semisimple (cf.\ Subsection \ref{ss:ideals}). This amounts to  the following: given a $\delta$-algebra $A$, prove that $\rad{\forg{A}}:=\bigcap{\Max{\forg{A}}}=\{0\}$. This will be the key fact needed to show, in the next section, that the functor  $\forg{-}$ is full. To improve legibility, in the rest of this section we blur the distinction between $\forg{A}$ and $A$.
\begin{lemma}\label{inrad}
 If $A$ is a $\delta$-algebra, and $x\in A$ satisfies $x\leq\fmn(1)$ for all $n\in\N$, then $x\in\rad{A}$.
\end{lemma}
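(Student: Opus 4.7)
My plan is to reduce to Lemma \ref{mvrad}, so I must show that, under the hypothesis, $nx\leq\neg x$ for every $n\in\N$. The intuition is that $\fmn(1)$ ``behaves like'' $\frac{1}{2^n}$: by Corollary \ref{cor:new}.(3) we have $\fm(y)\oplus\fm(y)=y$, which applied to $y=\fmnn(1)$ gives the doubling relation $\fmn(1)\oplus\fmn(1)=\fmnn(1)$. Iterating this $k$ times yields, for each $1\leq k\leq m-1$,
\[
2^{k}\fmm(1)\;=\;f_{\frac{1}{2^{m-k}}}(1),
\]
and in particular the key identity
\[
2^{m-1}\,\fmm(1)\;=\;\fm(1). \qquad(\ast)
\]

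Next I would establish two auxiliary inequalities. First, the ``half of one'' estimate: Lemma \ref{lem:1}.(9) with $x=1$ gives $\fm(1)\odot\fm(1)=0$; since by Lemma \ref{lem:1}.(7) we have $\fmm(1)\leq\fm(1)$, monotonicity of $\odot$ (Lemma \ref{mv2}) yields $\fm(1)\odot\fmm(1)=0$, which by Lemma \ref{mv1} translates to
\[
\fm(1)\;\leq\;\neg\fmm(1). \qquad(\dagger)
\]
Second, from the hypothesis $x\leq\fmm(1)$ I get $\neg\fmm(1)\leq\neg x$ and, again by Lemma \ref{mv2}, $nx\leq n\fmm(1)$ for every $n\in\N$.

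Now I would finish as follows. Fix $n\in\N$ and pick $m\in\N$ large enough that $2^{m-1}\geq n$ (e.g.\ $m=n$). Then, chaining the estimates above,
\[
nx\;\leq\;n\,\fmm(1)\;\leq\;2^{m-1}\,\fmm(1)\;\stackrel{(\ast)}{=}\;\fm(1)\;\stackrel{(\dagger)}{\leq}\;\neg\fmm(1)\;\leq\;\neg x,
\]
which is the desired inequality. As $n$ was arbitrary, Lemma \ref{mvrad} gives $x\in\rad{A}$.

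No step looks like a serious obstacle; the only delicate point is verifying $(\ast)$ by a clean induction on $k$ that avoids confusion with the edge case $k=n$ (where one would pass from $\fm(1)\oplus\fm(1)=1$). Restricting to $k\leq m-1$ and then using monotonicity to absorb the factor $n\leq 2^{m-1}$ sidesteps any truncation issues with $\oplus$.
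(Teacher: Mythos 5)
Your proof is correct and follows essentially the same route as the paper: both reduce to Lemma \ref{mvrad} via the key identity that $2^{m-1}$ $\oplus$-copies of $\fmm(1)$ equal $\fm(1)$ (your $(\ast)$ is exactly the paper's inline claim, proved by the same doubling argument from Corollary \ref{cor:new}.(3)), together with the bound $n\fmm(1)\leq 2^{m-1}\fmm(1)$. The only cosmetic difference is the finish: the paper uses $\neg\fm(1)=\fm(1)$ to get $x\odot\fm(1)=0$ and hence $x\odot nx=0$, whereas you chain order inequalities through $\fm(1)\leq\neg\fmm(1)\leq\neg x$ using $\fm(1)\odot\fm(1)=0$ — equivalent bookkeeping of the same facts.
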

\begin{proof}
Notice that \[x\leq\fm(1) \; \Leftrightarrow \; x\ominus \fm(1)=0 \; \Leftrightarrow \; x\odot \neg\fm(1)=0,\] so that, by Lemma \ref{lem:1}.(6), we have
 \begin{align}\label{firsteqaux} x\odot\fm(1)=0. \end{align} Let us fix an arbitrary $n\in\N$. Then $x\leq f_{\frac{1}{2^{n+1}}}(1)$ entails  \begin{align*}
nx&\leq \underbrace{f_{\frac{1}{2^{n+1}}}(1)\oplus\cdots \oplus f_{\frac{1}{2^{n+1}}}(1)}_{n \, \text{times}} \tag{Lemma \ref{mv2}}\\ 
&\leq \underbrace{f_{\frac{1}{2^{n+1}}}(1)\oplus\cdots \oplus f_{\frac{1}{2^{n+1}}}(1)}_{2^n \, \text{times}}.
\end{align*}
\begin{claim}\label{cl:inline}$\fm(1)=\underbrace{f_{\frac{1}{2^{n+1}}}(1)\oplus\cdots \oplus f_{\frac{1}{2^{n+1}}}(1)}_{2^n \, \text{times}}$.
\end{claim}
\begin{proof}[Proof of Claim \ref{cl:inline}]We prove  \[\underbrace{\fmn(x)\oplus\cdots\oplus\fmn(x)}_{2^m \, \text{times}}=f_{\frac{1}{2^{n-m}}}(x)\]
for each $x\in A$ and for each $m,n\in\N$ with $m<n$.  If $m=1$, then \begin{align*}
 \fmn(x)\oplus\fmn(x)&=\fm(\fmnn(x))\oplus\fm(\fmnn(x)) \\
 &=\fmnn(x). \tag{Corollary \ref{cor:new}.(3)}
\end{align*}
Now let $1<m<n$, and assume the statement holds for $m-1$. We see that \begin{align*}
\underbrace{\fmn(x)\oplus\cdots\oplus\fmn(x)}_{2^m \, \text{times}}&=(\underbrace{\fmn(x)\oplus\cdots\oplus\fmn(x)}_{2^{m-1} \, \text{times}})\oplus(\underbrace{\fmn(x)\oplus\cdots\oplus\fmn(x)}_{2^{m-1} \, \text{times}}) \\
&=f_{\frac{1}{2^{n-m+1}}}(x)\oplus f_{\frac{1}{2^{n-m+1}}}(x) \tag{inductive hypothesis} \\
&=\fm(f_{\frac{1}{2^{n-m}}}(x))\oplus \fm(f_{\frac{1}{2^{n-m}}}(x)) \\
&=f_{\frac{1}{2^{n-m}}}(x). \tag{Corollary \ref{cor:new}.(3)}
\end{align*}

\end{proof}
The claim entails $nx \leq \fm(1)$, so that 
by \eqref{firsteqaux} and Lemma \ref{mv2} we have \[x\odot nx\leq x\odot\fm(1)=0.\] 
Therefore \[x\odot nx=0  \, \Leftrightarrow \, nx\ominus\neg x=0 \, \Leftrightarrow \, nx\leq\neg x.\] Then Lemma \ref{mvrad} 
implies $x\in\rad{A}$.
\end{proof}
For the following lemma, recall Notation \ref{n:iterated sum}.
\begin{lemma}\label{radmez}
 If $A$ is a $\delta$-algebra and $x\in\rad{A}$ then, for all $n\in\N$, \[\fmn(2^n x)=x.\]
\end{lemma}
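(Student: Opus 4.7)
The plan is to induct on $n$, with the nontrivial content concentrated in the base case $n=1$: the assertion $\fm(2x) = x$ for every $x \in \rad{A}$. The inductive step is then routine unwinding: since $\rad{A}$ is an ideal of $A$, it is closed under $\oplus$, so $2^{n-1}x \in \rad{A}$, and applying the base case to this element gives $\fm(2^n x) = \fm(2 \cdot 2^{n-1}x) = 2^{n-1}x$; composing with $\fmnn$ and invoking the inductive hypothesis yields $\fmn(2^n x) = \fmnn(\fm(2^n x)) = \fmnn(2^{n-1}x) = x$.

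To handle the base case, I would first isolate the purely MV-algebraic identity $(2x) \ominus x = x$ for $x \in \rad{A}$. The heuristic is that infinitesimals are so small that $x \oplus x$ cannot truncate, so the MV-difference $(2x) \ominus x$ should coincide with the naive subtraction $x$. Formally, since $x \leq 2x$, Lemma \ref{mv3} yields $2x = x \oplus ((2x) \ominus x)$, and hence $x \oplus x = x \oplus ((2x) \ominus x)$. One then cancels the common summand $x$ via Lemma \ref{canc}, after checking the two orthogonality hypotheses: $x \odot x = 0$ because $x \in \rad{A}$ (Lemma \ref{mv5}); and $((2x) \ominus x) \odot x = 0$ because $(2x) \ominus x \leq 2x \leq \neg x$ (the second inequality being $2x \leq \neg x$ from Lemma \ref{mvrad}), so that Lemma \ref{mv1} supplies the required orthogonality. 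This gives $(2x) \ominus x = x$.

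From here the base case is a short application of axiom \textbf{A6}: one computes $\fm(x) = \fm((2x) \ominus x) = \fm(2x) \ominus \fm(x)$. Since $\fm$ is monotone (Lemma \ref{lem:1}.(8)) we have $\fm(x) \leq \fm(2x)$, so Lemma \ref{mv3} rewrites the preceding equation as $\fm(2x) = \fm(x) \oplus \fm(x)$, and Corollary \ref{cor:new}.(3) collapses the right-hand side to $x$.

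The main obstacle, as I see it, is recognising that $(2x) \ominus x = x$ is exactly the bridge between the additive identity $\fm(x) \oplus \fm(x) = x$ already available in every $\delta$-algebra (Corollary \ref{cor:new}.(3)) and the multiplicative identity $\fm(2x) = x$ that we want. The radical hypothesis is used only to produce the non-truncation condition $2x \leq \neg x$, which is precisely what the MV-cancellation argument needs.
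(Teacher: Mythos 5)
Your proof is correct; it follows the paper in reducing everything to the case $n=1$ by an easy induction, but the base case itself runs along a genuinely different line. The paper first establishes, for \emph{arbitrary} $y$, the identity $\fm(y\oplus y)=(y\oplus\fm(1))\ominus\fm(1)$ (via \textbf{A6}, the self-duality $\neg\fm(1)=\fm(1)$ of Lemma \ref{lem:1}.(6), and Corollary \ref{cor:new}.(3)), then combines it with Lemma \ref{mv4}.(2) and applies the cancellation law (Lemma \ref{canc}) with common summand $\fm(1)$, the orthogonality hypotheses coming from $\fm(1)\odot\fm(1)=0$, $x\odot x=0$ and Lemma \ref{mv5}; the radical hypothesis is used only at that final cancellation. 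You instead perform the cancellation entirely inside the MV-reduct, with common summand $x$ itself, to obtain the purely MV-algebraic identity $(2x)\ominus x=x$ for $x\in\rad{A}$ (your orthogonality checks via Lemma \ref{mvrad}, Lemma \ref{mv5} and Lemma \ref{mv1} are sound), and only afterwards bring in the $\delta$-structure through \textbf{A6}, the monotonicity of $\fm$ (Lemma \ref{lem:1}.(8)), Lemma \ref{mv3}, and $\fm(x)\oplus\fm(x)=x$. Your route dispenses with the auxiliary identity involving $\fm(1)$, with Lemma \ref{mv4}.(2), and with Lemma \ref{lem:1}.(6), and it separates neatly the MV-algebraic content (infinitesimals add without truncation) from the $\delta$-algebraic content (halving); both arguments ultimately rest on the same pillars, namely the cancellation law, Lemma \ref{mv5}, \textbf{A6} and Corollary \ref{cor:new}.(3).
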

\begin{proof}It suffices to prove the assertion for $n=1$, as the general case follows by a trivial induction. We begin proving that the following holds for each $y \in A$.
\begin{align}\label{firststep}\fm(y\oplus y)=(y\oplus\fm(1))\ominus\fm(1).\end{align}
Indeed,
 \begin{align*}
\fm(y\oplus y)&= \fm(\neg(\neg y\ominus y)) \\
&=\fm(1\ominus ( (1\ominus y) \ominus y)) \\
&=\fm(1)\ominus ((\fm(1)\ominus \fm(y))\ominus \fm(y)) \tag{\textbf{A6}} \\
&=\fm(1)\ominus \neg((\neg\fm(1)\oplus\fm(y))\oplus\fm(y)) \\
&=\fm(1)\odot(\neg\fm(1)\oplus\fm(y)\oplus\fm(y)) \\
&=(\neg\fm(1)\oplus\fm(y)\oplus\fm(y))\ominus\neg\fm(1) \\
&=(\fm(1)\oplus\fm(y)\oplus\fm(y))\ominus\fm(1) \tag{Lemma \ref{lem:1}.(6)} \\
&=(y\oplus\fm(1))\ominus\fm(1).\tag{Corollary \ref{cor:new}.(3)}
\end{align*}

Also, applying Lemma \ref{mv4}.(2) with $x:=y\oplus\fm(1)$ and $y:=\fm(1)$, we see that  \begin{align}\label{step0}[(y\oplus\fm(1))\ominus\fm(1)]\oplus \fm(1)=y\oplus\fm(1)\end{align} holds in any $\delta
$-algebra.
It now suffices to show that the hypotheses of Lemma \ref{canc} are satisfied, that is \begin{align}\label{claim1}[(x\oplus\fm(1))\ominus\fm(1)]\odot \fm(1)=0,\end{align} and \begin{align}\label{claim2}x\odot\fm(1)=0,\end{align} 
for then an application of Lemma \ref{canc} to \eqref{step0} yields \[(x\oplus\fm(1))\ominus\fm(1)=x,\] so that by \eqref{firststep} we have $\fm(x\oplus x)=x$.     

Indeed, \eqref{claim1} follows by a straightforward MV-algebraic calculation. Since it is immediate that $\fm(1)\odot \fm(1)=0$, and $x\odot x=0$  by Lemma \ref{mv5},
a further application of Lemma \ref{mv5} proves \eqref{claim2}. 
\end{proof}
The next result is fundamental in proving that the radical ideal of a $\delta$-algebra is trivial.
\begin{lemma}\label{infinit}
 In any $\delta$-algebra $A$, if $\{x_i\}_{i\in\N}\subseteq \rad{A}$ then $\delta(\vec{x})\in\rad{A}$.
\end{lemma}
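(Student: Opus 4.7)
The plan is to use Lemma \ref{inrad}, which reduces the problem to proving $\delta(\vec x) \leq \fmn(1)$ for every $n \in \N$. The critical auxiliary observation is a simple consequence of Lemma \ref{radmez}: if $r \in \rad{A}$ then, for every $j \in \N$,
\[
r \;=\; \fmi[j](2^j r) \;\leq\; \fmi[j](1),
\]
because $2^j r \leq 1$ and $\fmi[j]$ is monotone by Lemma \ref{lem:1}.(8). Thus radical elements are uniformly bounded above by $\fmi[j](1)$ for every $j$.

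Fixing $k \in \N$, I would split the infinitary sum using Lemma \ref{p1}.(2) with $n := k+1$ and rewrite each piece via Corollary \ref{cor:new}.(1) and Lemma \ref{lem:1}.(2):
\[
\delta(\vec x) \;=\; \bigoplus_{i=1}^{k+1} \fmi(x_i) \;\oplus\; f_{\frac{1}{2^{k+1}}}\!\bigl(\delta(x_{k+2}, x_{k+3}, \ldots)\bigr).
\]
The tail is immediately bounded: by monotonicity (Lemma \ref{lem:1}.(8)),
$f_{\frac{1}{2^{k+1}}}(\delta(x_{k+2}, x_{k+3}, \ldots)) \leq f_{\frac{1}{2^{k+1}}}(1)$.

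For the head, each $\fmi(x_i) \leq x_i \in \rad{A}$ (Lemma \ref{lem:1}.(7)), so $\fmi(x_i) \in \rad{A}$, and by the auxiliary observation $\fmi(x_i) \leq f_{\frac{1}{2^j}}(1)$ for any $j$ of my choosing. I would pick $j$ large enough that $k+1 \leq 2^{\,j-(k+1)}$; iterating Corollary \ref{cor:new}.(3) yields $2^{a} f_{\frac{1}{2^j}}(1) = f_{\frac{1}{2^{j-a}}}(1)$, so using Lemma \ref{mv2} repeatedly,
\[
\bigoplus_{i=1}^{k+1} \fmi(x_i) \;\leq\; (k+1) \cdot f_{\frac{1}{2^j}}(1) \;\leq\; 2^{\,j-(k+1)} \cdot f_{\frac{1}{2^j}}(1) \;=\; f_{\frac{1}{2^{k+1}}}(1).
\]
Combining head and tail via Lemma \ref{mv2} and applying Corollary \ref{cor:new}.(3) once,
\[
\delta(\vec x) \;\leq\; f_{\frac{1}{2^{k+1}}}(1) \oplus f_{\frac{1}{2^{k+1}}}(1) \;=\; \fmn[k](1).
\]
Since $k$ was arbitrary, Lemma \ref{inrad} concludes the proof.

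The main obstacle, I expect, is spotting that Lemma \ref{radmez} gives precisely the uniform bound $r \leq \fmi[j](1)$ for radical elements — without this, one can split $\delta(\vec x)$ and bound the tail by $\fmi[m](1)$, but has no grip on the finite head. Once that observation is in hand, the remaining arithmetic is a routine exercise in exploiting $\fm(y) \oplus \fm(y) = y$ to double halved quantities enough times to swallow the $k+1$ radical summands.
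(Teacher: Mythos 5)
Your proof is correct and follows essentially the same route as the paper: reduce to showing $\delta(\vec{x})\leq\fmn(1)$ for all $n$ via Lemma \ref{inrad}, split off a finite head with Lemma \ref{p1}.(2) and Lemma \ref{lem:1}.(2), and control that head using Lemma \ref{radmez}. The only difference is in the mechanics of the head bound: you bound it termwise (every radical element lies below $f_{\frac{1}{2^j}}(1)$ for all $j$) and then absorb the $k+1$ summands by the doubling count already proved as Claim \ref{cl:inline}, whereas the paper bounds the whole block in one step by writing $x_i=\fmm(2^m x_i)$ and pulling $\fmm$ out of $\delta$ via Lemma \ref{lem:1}.(1) --- both variants rest on exactly the same ingredients.
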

\begin{proof}
By Lemma \ref{inrad} it is sufficient to prove that, for all $n\in\N$, $\delta(\vec{x})\leq \fmn(1)$. Fix  $n\in \N$, and set $m:=n+1$. Then Corollary \ref{cor:new}.(3) entails at once
\begin{align}\label{eq1aux}\fmm(1)\oplus\fmm(1)=\fmn(1).\end{align} 
We claim that \begin{align}\label{blackclaim}\delta(x_1,\ldots,x_m,\vec{0})\leq \fmm(1).\end{align} 
Indeed, we have \begin{align*}
\delta(x_1,\ldots,x_m,\vec{0})&=\delta(\fmm(2^m x_1),\ldots,\fmm(2^m x_m),\vec{0}) \tag{Lemma \ref{radmez}} \\
&=\fmm(\delta(2^m x_1,\ldots,2^m x_m,\vec{0}) )\tag{Lemma \ref{lem:1}.(1)} \\
&\leq \fmm(1). \tag{Lemma \ref{lem:1}.(8)}
\end{align*}
 The proof is then completed by the following computation. \begin{align*}
\delta(\vec{x})&=\delta(x_1,\ldots,x_m,\vec{0})\oplus \delta(0,\ldots,0,x_{m+1},x_{m+2},\ldots) \tag{Lemma \ref{p1}.(2)} \\
&=\delta(x_1,\ldots,x_m,\vec{0})\oplus \fmm(\delta(x_{m+1},x_{m+2},\ldots)) \tag{Lemma \ref{lem:1}.(2)} \\
&\leq \fmm(1)\oplus \fmm(\delta(1,1,\ldots)) \tag{Eq.\ \ref{blackclaim}, \textbf{A5}, Lemma \ref{mv2}} \\
&=\fmm(1)\oplus \fmm(1) \tag{\textbf{A3}} \\
&=\fmn(1). \tag{Eq.\ \ref{eq1aux}}
\end{align*}
\end{proof}
We can finally prove the main result of this section.
\begin{theorem}\label{semisem}
 The underlying MV-algebra of any $\delta$-algebra is semisimple.
\end{theorem}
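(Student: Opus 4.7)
The plan is to take $x\in\rad{A}$ and show $x=0$ by using $\delta$ to construct an element $z\in\rad{A}$ that simultaneously dominates every $nx$ and is bounded above by every $\fmn(1)$; the resulting pinch $nx\leq\fmn(1)$ should then force $x=0$.

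Set $z:=\delta(2x,4x,8x,\ldots)$. Since $\rad{A}$ is an MV-ideal and $x\in\rad{A}$, every $2^{i}x$ lies in $\rad{A}$, and Lemma~\ref{infinit} gives $z\in\rad{A}$. For the lower bound, Corollary~\ref{cor:new}.(1) yields
\[
\delta(2x,4x,\ldots,2^{n}x,\vec{0}) \;=\; \bigoplus_{i=1}^{n}\fmi(2^{i}x),
\]
and Lemma~\ref{radmez} evaluates each $\fmi(2^{i}x)$ to $x$, so this truncation equals $nx$; Axiom~\textbf{A5} (monotonicity of $\delta$) then gives $z\geq nx$ for every $n\in\N$. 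For the upper bound, Lemma~\ref{radmez} applied to $z\in\rad{A}$ yields $\fmn(2^{n}z) = z$, which combined with the trivial $2^{n}z\leq 1$ and monotonicity of $\fmn$ (Lemma~\ref{lem:1}.(8)) gives $z\leq\fmn(1)$ for every $n\in\N$. Hence $nx\leq\fmn(1)$ for all $n\in\N$.

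To conclude $x = 0$, I would exploit the uniform smallness $nx\leq\fmn(1)$ further. The natural first move is to specialise $n$ to $2^{k}$ and apply $\fmn$ with $n=k$ to both sides, using Lemma~\ref{radmez} to collapse the left-hand side to $x$; this yields $x\leq\fmn(1)$ for ever-increasing values of $n$, which alone merely re-establishes $x\in\rad{A}$. The decisive step, which I expect to be the main obstacle, is to extract a genuine archimedean property of $\rad{A}$ from the $\delta$-axioms themselves---for instance by iterating the construction (with $z$ in place of $x$, producing an ascending chain in $\rad{A}$ bounded by $\fm(1)$) and then applying the MV-cancellation lemma (Lemma~\ref{canc}) to an equality of the form $w\oplus x=w$ arising in the limit. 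Such an equality permits cancellation because $w,x\in\rad{A}$ implies $w\leq\fm(1)$, whence $x\leq\fm(1) = \neg\fm(1)\leq\neg w$, i.e.\ $w\odot x = 0$, and the equality $w\oplus x=w$ then forces $x=0$.
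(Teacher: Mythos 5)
You have the right element in hand---your $z:=\delta(2x,4x,8x,\ldots)$ is exactly the element the paper uses---and the right cancellation mechanism (Lemma \ref{canc}, with $x\odot w=0$ coming from membership in the radical, cf.\ Lemma \ref{mv5}), but the decisive equality of the form $w\oplus x=w$ is never actually derived: you defer it to an iteration ``in the limit,'' which you yourself flag as the main obstacle. That is a genuine gap, and the detour through $nx\leq z\leq\fmn(1)$ cannot close it: as you note, applying $\fmn$ only re-proves $x\in\rad{A}$, and in fact the bounds $nx\leq z$ for all $n\in\N$ with $z\in\rad{A}$ do not force $x=0$ in a general MV-algebra (take $\Gamma$ of the lexicographic product $\Z\times\Z\times\Z$ with unit $(1,0,0)$, $x=(0,0,1)$, $z=(0,1,0)$). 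So some identity coming specifically from the $\delta$-axioms is indispensable, and no ``limit'' is available in an abstract $\delta$-algebra to produce it.

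The missing point is that the equality $z=x\oplus z$ is an immediate one-step unfolding of $\delta$, not an asymptotic statement. Indeed, for $x\in\rad{A}$,
\begin{align*}
z=\delta(2x,2^2x,2^3x,\ldots)&=\delta(2x,\vec{0})\oplus\delta(0,2^2x,2^3x,\ldots) && \text{(Lemma \ref{lem:1}.(4))}\\
&=\fm(2x)\oplus\fm\bigl(\delta(2^2x,2^3x,\ldots)\bigr) && \text{(\textbf{A4})}\\
&=x\oplus\delta\bigl(\fm(2\cdot 2x),\fm(2\cdot 2^2x),\ldots\bigr) && \text{(Lemma \ref{radmez}, \textbf{A2})}\\
&=x\oplus\delta(2x,2^2x,\ldots)=x\oplus z, && \text{(Lemma \ref{radmez})}
\end{align*}
after which your own argument (or Lemma \ref{mv5}) gives $x\odot z=0$, and Lemma \ref{canc} yields $x=0$. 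This is precisely the paper's proof; your preliminary bounds $nx\leq z\leq\fmn(1)$, while correct, are not needed once this identity is available.
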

\begin{proof}
 Let $A$ be a $\delta$-algebra, and pick an arbitrary element $x\in\rad{A}$. Since $\rad{A}$ is an ideal, $nx\in\rad{A}$ for all $n\in\N$. Consider then the countable sequence \[\vec{v}:=\{2^i x\}_{i\in\N}\subseteq\rad{A}.\] Then $\delta(\vec{v})\in\rad{A}$ by Lemma \ref{infinit}. We next prove
\begin{align}\label{mainradeq}\delta(\vec{v})=x\oplus\delta(\vec{v}).\end{align}
Indeed,
\begin{align*} 
\delta(\vec{v})&=\delta(2x,\vec{0})\oplus\delta(0,2^2x,2^3x,\ldots) \tag{Lemma \ref{lem:1}.(4)}\\ 
&=\fm(2x)\oplus \fm(\delta(2^2x,2^3x,\ldots)) \tag{\textbf{A4}}\\
&=x\oplus \fm(\delta(2(2x),2(2^2x),\ldots)) \tag{Lemma \ref{radmez}}\\
&=x\oplus\delta(\fm(2(2x)),\fm(2(2^2x)),\ldots) \tag{\textbf{A2}} \\
&=x\oplus \delta(2x,2^2x,\ldots) \tag{Lemma \ref{radmez}}\\
&=x\oplus\delta(\vec{v}).
\end{align*}

Since $x,\delta(\vec{v})\in\rad{A}$, Lemma \ref{mv5} entails $x\odot \delta(\vec{v})=0$. Applying Lemma \ref{canc} to equation \eqref{mainradeq}, we conclude that $x=0$, i.e.\ $\rad{A}=\{0\}$.
\end{proof}
\section{The forgetful functor $\Va\longrightarrow\MV$ is full.}\label{s:etadelta}
In this section we continue to  blur the distinction between $\forg{A}$ and $A$, as in the previous one.
\begin{lemma}\label{fcomm} Let $A$ and $B$ be $\delta$-algebras, and let  $h\colon A\to B$ be any homomorphism of MV-algebras. Then $h$ preserves  the derived  operations $\fmn$, for all $n\in\N$.
\end{lemma}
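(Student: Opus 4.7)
The plan is to characterise $\fm$ purely within the MV-algebraic reduct, using semisimplicity. By Corollary \ref{cor:new}.(3) we have $\fm(x)\oplus\fm(x)=x$, and by Lemma \ref{lem:1}.(9) we have $\fm(x)\odot\fm(x)=0$. Thus $\fm(x)$ is a ``half'' of $x$ in the MV-algebraic sense, i.e.\ a solution $y$ of the system
\begin{align*}
y\oplus y=x,\qquad y\odot y=0.
\end{align*}

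The key step is to show that in any $\delta$-algebra such a half is unique. I would argue as follows. By Theorem \ref{semisem}, the underlying MV-algebra is semisimple, so by Lemma \ref{MVHolder} (together with Corollary \ref{cor:reflection}.(2)) it embeds into a power of $\int$ via an MV-homomorphism. Since MV-homomorphisms preserve $\oplus$, $\odot$, and $0$, it suffices to verify uniqueness of halves in the standard MV-algebra $\int$. There, $y\odot y=0$ is equivalent to $y\leq \tfrac{1}{2}$ (unpacking the definition via \eqref{eq:plus}--\eqref{eq:not}), whence $y\oplus y=2y$, and so $y=x/2$ is forced.

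Given this uniqueness, the conclusion for $n=1$ is immediate: for $h\colon A\to B$ an MV-homomorphism between $\delta$-algebras, $h(\fm(x))$ satisfies both $h(\fm(x))\oplus h(\fm(x))=h(x)$ and $h(\fm(x))\odot h(\fm(x))=0$ in $B$, which is also semisimple and hence has at most one such half; so $h(\fm(x))=\fm(h(x))$. The general case then follows by an easy induction on $n$, since $\fmn=\fm\circ\fmnn$ by definition.

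The only real obstacle is the uniqueness of halves, which genuinely relies on semisimplicity (Theorem \ref{semisem})---in the presence of non-trivial infinitesimals, halves need not be unique---so Theorem \ref{semisem} is doing the essential work here. Everything else is bookkeeping.
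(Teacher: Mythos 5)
Your proof is correct and not circular (Theorem \ref{semisem} is established before this lemma and does not depend on it), but it takes a different and heavier route than the paper. The paper also reduces everything to uniqueness of ``halves'', i.e.\ the quasi-equation: if $x\odot x=0$, $y\odot y=0$ and $x\oplus x=y\oplus y$, then $x=y$; however, it proves this in \emph{every} MV-algebra, with no semisimplicity at all, by representing the algebra as $\Gamma\Xi(A)$ via Mundici's equivalence (Lemma \ref{l:gamma}): there $x\odot x=(x+x-u)\vee 0$, so $x\odot x=0$ forces $x\oplus x=x+x$, and $2x=2y$ gives $x=y$ because $\ell$-groups are torsion-free. In particular your closing remark is mistaken: uniqueness of halves does \emph{not} fail in the presence of infinitesimals (e.g.\ in Chang's algebra $\Gamma(\Z\times_{\mathrm{lex}}\Z,(1,0))$ it still holds, for the same torsion-freeness reason), so Theorem \ref{semisem} is not doing essential work here --- it is only the crutch your particular reduction to $\int$ leans on. What your approach buys is that it avoids the functor $\Xi$ and good sequences entirely, at the price of invoking the strongest result proved so far (semisimplicity) plus the H\"older/reflection apparatus of Corollary \ref{cor:reflection}.(2); the paper's argument is more local and shows the preservation of $\fmn$ is a fact about arbitrary MV-homomorphisms between $\delta$-algebras that needs none of that machinery. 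The rest of your argument (both $h(\fm(x))$ and $\fm(h(x))$ are halves of $h(x)$ in $B$, then induction on $n$) matches the paper exactly.
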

\begin{proof}
In any MV-algebra $A$,  the following quasi-equation holds: if $x\odot x = 0$, $y\odot y = 0$, and $x\oplus x  = y\oplus y$, then $x=y$. To see this, represent $A$ as $\Gamma\Xi(A)$ by Lemma \ref{l:gamma}, and note that 
$x\odot x=(x+x-u)\vee 0$, so that $x\oplus x=x+x$, and $x=y$. We use this fact below.

It suffices to prove the assertion for $n=1$, for the rest follows by a straightforward induction. Pick $x\in A$. By Corollary \ref{cor:new}.(3) we have 
\[
h(\fm(x))\oplus h(\fm(x))=h(\fm(x)\oplus \fm(x))=h(x)
\]
and
 \[\fm(h(x))\oplus\fm(h(x))=h(x).\]
By Lemma \ref{lem:1}.(9), moreover,

\[
h(\fm(x))\odot h(\fm(x))=h(\fm(x)\odot\fm(x))=h(0)=0
\]
and
\[
\fm(h(x))\odot\fm(h(x))=0.
\] 
We conclude that $\fm(h(x))=h(\fm(x))$.
\end{proof}
\begin{lemma}\label{full01}
Let $A$ be a $\delta$-algebra. If $h\colon A\to\int$ is any homomorphism of MV-algebras, then $h$ is a $\delta$-homomorphism.
\end{lemma}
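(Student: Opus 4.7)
The plan is to exploit the decomposition of $\delta$ into a finite prefix plus a uniformly small tail, and then use that $h$, being an MV-homomorphism, automatically commutes with the derived operations $\fmi$ by Lemma \ref{fcomm}. Concretely, we already know the target value: since $h\colon A\to I$ is an MV-homomorphism, the claim is that $h(\delta(\vec{x}))$ coincides with $\delta(h(x_1),h(x_2),\ldots)=\sum_{i\geq 1} h(x_i)/2^i$, which is the $\delta$-operation on the standard $\delta$-algebra $I$.

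First, I would apply Lemma \ref{p1}.(2) to split, for any $n\in\N$,
\[
\delta(\vec{x}) \;=\; \delta(x_1,\ldots,x_n,\vec{0})\,\oplus\,\delta(0,\ldots,0,x_{n+1},x_{n+2},\ldots),
\]
and then rewrite the prefix via Corollary \ref{cor:new}.(1) as the \emph{finite} MV-term $\bigoplus_{i=1}^{n}\fmi(x_i)$, and the tail via Lemma \ref{lem:1}.(2) as $\fmn(\delta(x_{n+1},x_{n+2},\ldots))$. Applying $h$, and using that $h$ preserves $\oplus$ and every $\fmi$ (Lemma \ref{fcomm}), the prefix is carried to $\bigoplus_{i=1}^{n}\fmi(h(x_i))=\sum_{i=1}^{n}h(x_i)/2^i$, where the truncated sum agrees with the ordinary one because $\sum_{i=1}^{n}1/2^i<1$.

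Next, I would control the tail. Since $\delta(x_{n+1},x_{n+2},\ldots)\leq \delta(1,1,\ldots)=1$ by \textbf{A3}, Lemma \ref{lem:1}.(8) gives
\[
\fmn(\delta(x_{n+1},x_{n+2},\ldots))\leq\fmn(1),
\]
so applying $h$ (which preserves order and $\fmn$) the tail becomes an element $\varepsilon_n\in I$ with $0\leq\varepsilon_n\leq\fmn(1)=1/2^n$. Putting this together,
\[
h(\delta(\vec{x}))\;=\;\sum_{i=1}^{n}\frac{h(x_i)}{2^i}\;\oplus\;\varepsilon_n,
\]
and since the ordinary sum of the two summands is at most $(1-1/2^n)+1/2^n=1$, the truncation is inactive and the $\oplus$ here is ordinary addition in $\R$. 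Hence
\[
\left|\,h(\delta(\vec{x}))-\sum_{i=1}^{n}\frac{h(x_i)}{2^i}\,\right|\;\leq\;\frac{1}{2^n}.
\]

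Letting $n\to\infty$ yields $h(\delta(\vec{x}))=\sum_{i=1}^{\infty}h(x_i)/2^i=\delta(h(x_1),h(x_2),\ldots)$, which is precisely the condition for $h$ to preserve $\delta$. I do not expect any serious obstacle: the only subtlety is making sure the truncation in $\oplus$ does not contribute, which is automatic once one observes that the prefix is bounded by $1-1/2^n$ and the tail by $1/2^n$. All the heavy lifting has been done in the earlier lemmas of Section \ref{s:delta}.
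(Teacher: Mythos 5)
Your proposal is correct and follows essentially the same route as the paper's proof: split $\delta(\vec{x})$ via Lemma \ref{p1}.(2), rewrite the prefix with Corollary \ref{cor:new}.(1) and the tail with Lemma \ref{lem:1}.(2), push $h$ through using Lemma \ref{fcomm}, observe the truncation is inactive, and let $n\to\infty$. The only cosmetic difference is that you bound the tail by $1/2^n$ while the paper computes it exactly as $h(\delta(x_{n+1},x_{n+2},\ldots))/2^n$ before taking the limit.
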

\begin{proof}
For all $\{x_i\}_{i\in\N}\subseteq A$, and for all $n\in\N$, we have the equality 
\begin{align*}
h(\delta(\vec{x}))&=h(\delta(x_1,\ldots,x_n,\vec{0}))\oplus h(\delta(0,\ldots,0,x_{n+1},x_{n+2},\ldots))   \tag{Lemma \ref{p1}.(2)}\\
&=h\left(\bigoplus_{i=1}^n \fmi(x_i)\right)\oplus h(\fmn(\delta(x_{n+1},x_{n+2},\ldots))) \tag{Corollary \ref{cor:new}.(1), Lemma \ref{lem:1}.(2)} \\
&=\bigoplus_{i=1}^n \frac{h(x_i)}{2^i} \oplus \frac{h(\delta(x_{n+1},x_{n+2},\ldots))}{2^n}. \tag{Lemma \ref{fcomm}}
\end{align*}
In $\int$,  $\bigoplus_{i=1}^n \frac{h(x_i)}{2^i}$ coincides with $\sum_{i=1}^n \frac{h(x_i)}{2^i}$, because \[\sum_{i=1}^n {\frac{h(x_i)}{2^i}}\leq\sum_{i=1}^n {\frac{1}{2^i}}=1-\frac{1}{2^n}<1.\] 
Moreover, since \[\left(\sum_{i=1}^n {\frac{h(x_i)}{2^i}}\right) + \frac{h(\delta(x_{n+1},x_{n+2},\ldots))}{2^n}\leq \left(1-\frac{1}{2^n}\right)+\frac{1}{2^n}=1\]
we have
 \begin{align}\label{limit2delta}\left(\sum_{i=1}^n {\frac{h(x_i)}{2^i}}\right) \oplus \frac{h(\delta(x_{n+1},x_{n+2},\ldots))}{2^n}=\left(\sum_{i=1}^n {\frac{h(x_i)}{2^i}}\right) + \frac{h(\delta(x_{n+1},x_{n+2},\ldots))}{2^n}.
 \end{align} 
Now, $h(\delta(\vec{x}))$ does not depend on $n\in\N$. Hence, using the equality proved at the beginning of this proof,
\begin{align*}
h(\delta(\vec{x}))&=\lim_{n\to\infty}{\left(\sum_{i=1}^n \frac{h(x_i)}{2^i} + \frac{h(\delta(x_{n+1},x_{n+2},\ldots))}{2^n}\right)} \tag{Eq.\ \ref{limit2delta}}\\
&=\lim_{n \to\infty}{\sum_{i=1}^n {\frac{h(x_i)}{2^i}}} + \lim_{n \to\infty}{\frac{h(\delta(x_{n+1},x_{n+2},\ldots))}{2^n}} \\
&=\sum_{i=1}^{\infty}{\frac{h(x_i)}{2^i}},
 \end{align*}
 which completes the proof.
\end{proof}
As a  consequence of Lemma \ref{full01}, we obtain: 
\begin{theorem}\label{thm:fullness}The functor $\forg{-}\colon\Va\to \MV$ is full.
\end{theorem}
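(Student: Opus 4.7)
The plan is to reduce to the case already handled by Lemma \ref{full01}, leveraging the semisimplicity result of Theorem \ref{semisem} together with H\"older's Theorem (Lemma \ref{MVHolder}). Let $A$ and $B$ be $\delta$-algebras and let $h\colon\forg{A}\to\forg{B}$ be an MV-homomorphism. To prove $h$ is a $\delta$-homomorphism, I need to show that for every sequence $\{x_i\}_{i\in\N}\subseteq A$,
\begin{align*}
h(\delta(\vec{x})) = \delta(h(x_1),h(x_2),\ldots).
\end{align*}

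The idea is to test this equation through all MV-homomorphisms $B\to\int$. Since $B$ is a $\delta$-algebra, Theorem \ref{semisem} says that $\forg{B}$ is semisimple, so $\rad{\forg{B}}=\{0\}$, which means $\bigcap \Max\forg{B}=\{0\}$. By Lemma \ref{MVHolder}, for each $\m\in\Max\forg{B}$ there is a unique (injective) MV-homomorphism $\h_{\m}\colon\forg{B}/\m\to\int$, and these together with the quotient maps produce a jointly monic cone $\{\h_{\m}\circ\pi_{\m}\colon\forg{B}\to\int\}_{\m\in\Max\forg{B}}$. Hence it suffices to check that the two sides of the displayed equation have the same image under $\h_{\m}\circ\pi_{\m}$ for every maximal ideal $\m$ of $B$.

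Fix $\m\in\Max\forg{B}$ and set $k_{\m}:=\h_{\m}\circ\pi_{\m}$, an MV-homomorphism $\forg{B}\to\int$. Then $k_{\m}\circ h\colon\forg{A}\to\int$ is an MV-homomorphism from a $\delta$-algebra to $\int$; by Lemma \ref{full01}, it is automatically a $\delta$-homomorphism. Applying it to the left-hand side gives
\begin{align*}
k_{\m}(h(\delta(\vec{x}))) = \delta(k_{\m}(h(x_1)),k_{\m}(h(x_2)),\ldots).
\end{align*}
On the other hand, $k_{\m}\colon\forg{B}\to\int$ is likewise an MV-homomorphism from a $\delta$-algebra to $\int$, so a second application of Lemma \ref{full01} yields
\begin{align*}
k_{\m}(\delta(h(x_1),h(x_2),\ldots)) = \delta(k_{\m}(h(x_1)),k_{\m}(h(x_2)),\ldots).
\end{align*}
The two right-hand sides agree, so $k_{\m}$ sends both $h(\delta(\vec{x}))$ and $\delta(h(x_1),h(x_2),\ldots)$ to the same element of $\int$.

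Since this holds for every $\m$ and the family $\{k_{\m}\}_{\m\in\Max\forg{B}}$ is jointly injective on $\forg{B}$ (by semisimplicity of $\forg{B}$), we conclude $h(\delta(\vec{x}))=\delta(h(x_1),h(x_2),\ldots)$, completing the proof. The conceptual heart of the argument is Theorem \ref{semisem}, without which the reduction to $\int$-valued homomorphisms would not be available; the only step requiring thought is the double invocation of Lemma \ref{full01}, once to unwind $h(\delta(\vec{x}))$ through $k_{\m}\circ h$ and once to unwind the right-hand side through $k_{\m}$ alone.
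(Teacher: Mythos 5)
Your proof is correct and is essentially the paper's argument: the paper simply packages your jointly monic cone $\{\h_{\m}\circ\pi_{\m}\}_{\m\in\Max\forg{B}}$ into the single unit map $\eta_{B}\colon B\to\C(\Max{B})$, whose injectivity (Corollary \ref{cor:reflection}.(2) plus Theorem \ref{semisem}) is exactly your joint-injectivity step, and invokes Lemma \ref{full01} pointwise just as you do twice.
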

\begin{proof}Arguing pointwise, Lemma \ref{full01} immediately yields the following: if $A$ is any $\delta$-algebra, and  $X$ is any compact Hausdorff space, then any MV-algebra homomorphism $A\to\C(X)$  is a $\delta$-homomorphism. Now consider an MV-algebra homomorphism $h\colon A\to B$ between $\delta$-algebras $A$ and $B$, and the component  $\eta_{B}\colon B \to \C(\Max{B})$ of the unit of the adjunction between $\MV$ and $\KH^{\rm op}$ of Section \ref{s:reflection}. The composition $\eta_{B}\circ h$ is then a $\delta$-homomorphism. Since $B$ is a semisimple MV-algebra by Theorem \ref{semisem}, $\eta_{B}$ is injective by Corollary \ref{cor:reflection}.(2), whence $h$ must preserve the operation $\delta$.
\end{proof}
We state an immediate and yet important consequence of the fact that $\forg{-}$ is fully faithful.
\begin{corollary}\label{cor:unique}Let $A$ be an MV-algebra. There is at most one structure of $\delta$-algebra on $A$ whose MV-algebraic reduct is $A$.\qed

\end{corollary}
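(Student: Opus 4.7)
The plan is to derive this directly from Theorem \ref{thm:fullness} together with the essentially tautological faithfulness of the forgetful functor. Suppose $A_1 = (A, \delta_1, \oplus, \neg, 0)$ and $A_2 = (A, \delta_2, \oplus, \neg, 0)$ are two $\delta$-algebra structures sharing the same MV-reduct $A$. I would consider the set-theoretic identity $\id{A}\colon A \to A$. Since $A_1$ and $A_2$ have identical MV-operations, $\id{A}$ is trivially an MV-homomorphism from $\forg{A_1}$ to $\forg{A_2}$.

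Next I would invoke Theorem \ref{thm:fullness}: by fullness there exists a $\delta$-homomorphism $\tilde{h}\colon A_1\to A_2$ with $\forg{\tilde{h}}=\id{A}$. Because $\forg{-}$ is faithful (both categories are concrete over $\Set$, and the forgetful functor $\Va\to\MV$ just forgets the operation $\delta$ without altering the underlying function), the morphism $\tilde{h}$ must coincide, as a function, with $\id{A}$. Consequently $\id{A}$ itself is a $\delta$-homomorphism from $A_1$ to $A_2$.

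Unwinding what this means: for every sequence $\{x_i\}_{i\in\N}\subseteq A$,
\[
\delta_1(\vec{x})=\id{A}(\delta_1(\vec{x}))=\delta_2(\id{A}(x_1),\id{A}(x_2),\ldots)=\delta_2(\vec{x}).
\]
Hence $\delta_1=\delta_2$, proving the uniqueness claim. There is no real obstacle here, since the substantive work has already been carried out in Theorem \ref{thm:fullness}, whose proof in turn relied on semisimplicity (Theorem \ref{semisem}) and on Lemma \ref{full01}; the present statement is essentially a bookkeeping remark recording that fully faithful forgetful functors between concrete categories admit at most one lifting of structure on any given object.
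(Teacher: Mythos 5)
Your proposal is correct and matches the paper's own (implicit) argument: the corollary is stated precisely as an immediate consequence of the full faithfulness of $\forg{-}$ established in Theorem \ref{thm:fullness}, and your explicit unwinding via the identity map $\id{A}$ is exactly how that consequence is obtained.
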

\section{The Stone-Weierstrass Theorem.}\label{s:SW}
We saw in Corollary \ref{cor:reflection}.(2) that the MV-homomorphism $\eta_{A}\colon A\to\C(\Max{A})$ is injective if, and only if, the MV-algebra $A$ is semisimple. A characterisation of when  $\eta_{A}$  is surjective, given that it is injective, amounts to a Stone-Weierstrass Theorem for MV-algebras, which we prove in Lemma \ref{l:mvsw} assuming as Lemma \ref{l:sw} a version of the classical theorem. Using these ingredients we obtain, in Theorem \ref{t:SW-Delta}, a Stone-Weierstrass Theorem for $\delta$-algebras. 
\subsection{The Stone-Weierstrass Theorem for MV-algebras.}\label{ss:SW-MV}
For $X$ in $\KH$, a subset $A\subseteq \C(X,\R)$ is called \emph{separating}, or is said to 
\emph{separate the points of $X$}, if for each $x,y\in X$ such that $x\neq y$ there exists $f\in A$ such that  $f(x)\neq f(y)$.  
\begin{lemma}[The Stone-Weierstrass Theorem {\cite[Corollary 3]{Stone1948}, \cite[Theorem 7.29]{HR65}}]\label{l:sw} Let $X\neq\emptyset$ be a compact Hausdorff space, and let $G\subseteq \C(X,\R)$ be a unital $\ell$-subgroup, i.e.\ a sublattice subgroup containing the unit $1_{X}$. If $G$ is divisible \textup{(}as an Abelian group\textup{)} and separates the points of $X$, then $G$ is dense in $\C(X,\R)$ with respect to the uniform norm.
\qed\end{lemma}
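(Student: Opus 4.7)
The plan is to run the classical Kakutani-Stone lattice version of Stone-Weierstrass, adapted to the group-theoretic setting where multiplication by rationals is available only through divisibility. Throughout, fix $f\in\C(X,\R)$ and $\epsilon>0$; the goal is to produce $g\in G$ with $\|f-g\|_\infty\leqslant\epsilon$.

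The first step is to observe that $G$, being a torsion-free divisible Abelian group, is canonically a $\Q$-vector space, and contains the $\Q$-line $\Q\cdot 1_X$ generated by the unit. From this I would extract a \emph{two-point interpolation lemma}: for any distinct $x,y\in X$ and any target values $a,b\in\R$, and any $\eta>0$, there exists $k\in G$ with $|k(x)-a|<\eta$ and $|k(y)-b|<\eta$. Since $G$ separates points, pick $h\in G$ with $h(x)\neq h(y)$; the real-linear system $\alpha h(x)+\beta=a$, $\alpha h(y)+\beta=b$ has a unique solution $(\alpha_0,\beta_0)\in\R^2$ because its determinant is $h(x)-h(y)\neq 0$. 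By density of $\Q$ in $\R$, choose rationals $\alpha,\beta$ sufficiently close to $(\alpha_0,\beta_0)$ so that $k:=\alpha h+\beta\cdot 1_X\in G$ attains values within $\eta$ of $a$ and $b$ at $x$ and $y$ respectively. The degenerate case $x=y$ is handled directly by the density of $\Q\cdot 1_X$.

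With two-point interpolation in hand, the proof splits into two compactness sweeps. \emph{First sweep}: fix $x_0\in X$. For each $y\in X$, pick $g_y\in G$ with $|g_y(x_0)-f(x_0)|<\epsilon/2$ and $|g_y(y)-f(y)|<\epsilon/2$. By continuity, the open set $W_y:=\{z\in X\mid g_y(z)>f(z)-\epsilon\}$ contains $y$, so by compactness of $X$ finitely many $W_{y_1},\dots,W_{y_n}$ cover. Because $G$ is closed under finite joins, $g^{(x_0)}:=g_{y_1}\vee\cdots\vee g_{y_n}\in G$; by construction $g^{(x_0)}>f-\epsilon$ pointwise on $X$, while $g^{(x_0)}(x_0)<f(x_0)+\epsilon$ since each $g_{y_i}(x_0)<f(x_0)+\epsilon/2$. \emph{Second sweep}: the open set $U_{x_0}:=\{z\in X\mid g^{(x_0)}(z)<f(z)+\epsilon\}$ contains $x_0$, so by compactness again finitely many $U_{x_{0,1}},\dots,U_{x_{0,m}}$ cover $X$. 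Now $g:=g^{(x_{0,1})}\wedge\cdots\wedge g^{(x_{0,m})}\in G$, and on all of $X$ we have $f-\epsilon<g<f+\epsilon$, i.e.\ $\|f-g\|_\infty\leqslant\epsilon$. Since $\epsilon$ is arbitrary, $G$ is uniformly dense in $\C(X,\R)$.

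The main obstacle I anticipate is the two-point interpolation step: in the classical Stone-Weierstrass theorem for algebras one multiplies pointwise, but here only the additive group structure, the lattice operations, and divisibility are available. The argument above circumvents this by exploiting the fact that a single separating element $h\in G$, once augmented with the $\Q$-line $\Q\cdot 1_X$, gives a two-dimensional $\Q$-subspace that is $\R$-dense in the plane of value-pairs $(k(x),k(y))$ as $k$ ranges over $\Q h+\Q\cdot 1_X$; the rest of the proof is the standard purely lattice-theoretic compactness argument and uses nothing beyond that $G$ is closed under $\vee$ and $\wedge$.
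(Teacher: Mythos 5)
Your proof is correct, but note that the paper does not actually prove this lemma: it is stated with a \verb|\qed| and a citation to Stone (1948, Corollary 3) and Hewitt--Stromberg (Theorem 7.29), i.e.\ it is taken as a known classical fact. What you have written is a self-contained proof of that cited theorem, in its standard Kakutani--Stone lattice form: two-point interpolation followed by the two compactness sweeps using only $\vee$ and $\wedge$. Your handling of the hypothesis of divisibility is exactly where it matters. Since $G\subseteq \C(X,\R)$ is torsion-free, divisibility makes rational scalar multiplication well defined, so $G$ is a $\mathbb{Q}$-vector space containing $\mathbb{Q}\cdot 1_X$, and the interpolation step --- the only place the classical algebra or vector-lattice versions use multiplicative or real-scalar structure --- goes through by approximating the real solution $(\alpha_0,\beta_0)$ of the $2\times 2$ system by rationals; the sweeps then use nothing beyond the sublattice structure and compactness of $X$. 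If you write this up, make explicit that torsion-freeness (automatic for real-valued functions) is what gives uniqueness of $n$-th divisors, hence a well-defined $\mathbb{Q}$-action, and that the degenerate situations (the $y=x_0$ case in the first sweep, and a one-point $X$, where separation is vacuous) are covered by density of $\mathbb{Q}\cdot 1_X$, as you indicate. Compared with the paper's choice to quote the literature, your route costs a page of routine bookkeeping but makes transparent that the divisibility hypothesis is consumed precisely in the two-point interpolation step, which is useful context for Lemma \ref{l:mvsw}, where the MV-algebra $A$ is required to be closed under rational scalars exactly so that the generated $\ell$-group is divisible.
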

In order to proceed, let us observe that  each unital $\ell$-group  $(H,u)$ is generated (as a group) by its unit interval $\Gamma(H,u)$. Indeed, given any non-negative element $h\in H$, since $u$ is a unit of $H$  there is $n\in \N$ such that $h\leq nu$. It is elementary that every $\ell$-group enjoys the Riesz decomposition property; thus (cf.\ \cite[Proposition 2.2]{Goodearl86}) there exist positive elements $h_1,\ldots, h_n$ of $H$ satisfying $h_i\leq u$ for each $i=1,\ldots, n$ and $h=\sum_{i=1}^{n}h_{i}$, as was to be shown.
We will freely use this fact in the sequel. Now, to translate Lemma \ref{l:sw} to MV-algebras we need the following  lemma about the functor $\Gamma$. 
\begin{lemma}\label{l:gammac(x)}Suppose the MV-algebra $A$ is a subalgebra of $\Gamma(G,u)$, for a unital $\ell$-group $(G,u)$.  Then the  $\ell$-group $G'$ generated by $A$ in $G$ is   isomorphic \textup{(}as a unital $\ell$-group\textup{)} to $\Xi(A)$, and $\Gamma(G',u)=A$.
\end{lemma}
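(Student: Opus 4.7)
My plan is to transport the inclusion $A\hookrightarrow\Gamma(G,u)$ through Mundici's equivalence (Lemma~\ref{l:gamma}) to obtain a unital $\ell$-group homomorphism $\phi\colon\Xi(A)\to G$, and then show that $\phi$ is an isomorphism onto the subgroup $G'$. From this both conclusions of the lemma will drop out.

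Explicitly, let $i\colon A\hookrightarrow\Gamma(G,u)$ denote the inclusion, and set $\phi:=\epsilon_{(G,u)}\circ\Xi(i)$, where $\epsilon\colon\Xi\circ\Gamma\to\id{\Grp}$ is the counit of the equivalence $\Gamma\dashv\Xi$. Since $\Gamma\dashv\Xi$ is an equivalence, both functors preserve and reflect monomorphisms, so $\Xi(i)$ is injective; composing with the iso $\epsilon_{(G,u)}$ yields that $\phi$ is injective. By naturality, $\Gamma(\phi)$ agrees with $i$ under the canonical identification $A\cong\Gamma(\Xi(A))$, so $\phi$ restricts on unit intervals to the inclusion $A\hookrightarrow\Gamma(G,u)$. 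In particular $A\subseteq\phi(\Xi(A))$, and since $\phi(\Xi(A))$ is an $\ell$-subgroup of $G$ containing $A$, it contains $G'$. Conversely, using the fact, recalled just before the lemma, that every element of a unital $\ell$-group is an integer combination of elements of its unit interval, we see that $\Xi(A)$ is generated as a group by $\Gamma(\Xi(A))$, hence $\phi(\Xi(A))$ is generated as a group by $\phi(\Gamma(\Xi(A)))=A$, so $\phi(\Xi(A))\subseteq G'$. Therefore $\phi$ is an isomorphism of unital $\ell$-groups from $\Xi(A)$ onto $G'$.

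For the second claim, note that an injective $\ell$-homomorphism reflects the lattice order (if $\phi(x)\leq\phi(y)$ then $\phi(x\wedge y)=\phi(x)$, and injectivity gives $x\wedge y=x$). Thus $\phi\colon\Xi(A)\to G'$ restricts to an order isomorphism $\Gamma(\Xi(A))\to\Gamma(G',u)$; combined with the natural identification $\Gamma(\Xi(A))\cong A$ and the fact that $\Gamma(\phi)$ coincides with the inclusion $i$, this yields $\Gamma(G',u)=\phi(\Gamma(\Xi(A)))=A$.

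The only delicate point is the injectivity of $\phi$, which I would handle by invoking the general categorical fact that in an equivalence both functors reflect and preserve monomorphisms, rather than trying to argue directly from the explicit construction of $\Xi(A)$ via good sequences. Everything else reduces to tracking the canonical isomorphisms of the equivalence and using that a unit generates its group by Riesz decomposition.
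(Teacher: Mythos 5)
Your proposal is correct and follows essentially the same route as the paper: both transpose the inclusion $A\hookrightarrow\Gamma(G,u)$ through Mundici's equivalence to a unital $\ell$-homomorphism $\Xi(A)\to G$ whose restriction to unit intervals is the inclusion, prove it injective, and identify its image with $G'$ and the unit interval of that image with $A$. The only cosmetic differences are that at the final step the paper argues with good sequences of length $1$ where you invoke order reflection of an injective $\ell$-homomorphism, and the injectivity point you flag as delicate is glossed at the same level of detail in the paper's own ``$\Gamma(f)$ is monic, hence so is $f$''.
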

\begin{proof}
Call $m$  the inclusion map $A \subseteq \Gamma(G,u)$. Further,  let $e\colon A\to \Gamma\Xi(A)$ be the map the sends $a\in A$ to the good sequence  $(a)$ of length $1$ in $\Xi(A)$. Now, using the adjunction $\Xi\dashv\Gamma$ 
granted by Lemma \ref{l:gamma}, $m$ induces a unique unital $\ell$-homomorphism $f\colon \Xi(A)\to G$ such that 
\begin{align}\label{eq:gamma}
\Gamma(f)\circ e =m.
\end{align}
 By Lemma \ref{l:gamma}, $e$ is an isomorphism, and therefore $\Gamma(f)$ is monic.
Hence, so is $f$, and the image $K:=f(\Xi(A))$ is unitally $\ell$-isomorphic to $\Xi(A)$. Since the unital $\ell$-group $K$ is generated by $\Gamma(K)$, it now suffices to show that $A=\Gamma(K)$.

To show $A\subseteq \Gamma(K)$, pick $a\in A$. Then $e(a)$ is such that $f(e(a))=a$ by \eqref{eq:gamma} together with the definition of  $\Gamma(f)$ as the restriction of $f$ to $\Gamma\Xi(A)$. This shows that $a \in K$. Since, moreover, $f$ preserves the unit, we have $a\in \Gamma(K)$.

To prove the converse inclusion, pick $x \in \Gamma(K)$. In other words, there exists a good sequence $(a_{1},\ldots,a_{l})$ such that (i) $f((a_{1},\ldots,a_{l}))=x$, and (ii) $0\leq f((a_{1},\ldots,a_{l}))\leq u$. Since $f$ is a unital 
$\ell$-group monomorphism, (ii) entails $(a_{1},\ldots,a_{l})\in \Gamma\Xi(A)$, and therefore $l=1$ by the definition of $\Xi(A)$. This proves $x=f(\,(a_{1})\,)=f(e(a_{1}))$. Again by  \eqref{eq:gamma}, we infer $x\in A$, and the proof is complete.
\end{proof}
\begin{remark}Lemma \ref{l:gammac(x)} is stated without proof in \cite[Lemma 2.1]{Cignolietal2004}, and it is part of the recent \cite[Theorem 1.2]{DubucPoveda15} --- where, as an additional bonus, the proof does not use the functor $\Xi$, i.e.\ good sequences.\end{remark}

We are now ready to prove:
\begin{lemma}[The MV-algebraic Stone-Weierstrass Theorem]\label{l:mvsw}Let $X\neq \emptyset$ be a compact Hausdorff space,  let $A$ be an MV-algebra, and suppose $A$ is a subalgebra of the MV-algebra $\C(X)$. If $A$ is closed under multiplication by  rational scalars in $\int$ and separates the points of $X$, then $A$ is dense in $\C(X)$ with respect to the uniform norm.
\end{lemma}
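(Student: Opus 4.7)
The plan is to deduce Lemma \ref{l:mvsw} from the classical Stone--Weierstrass Theorem (Lemma \ref{l:sw}) by passing from the MV-algebra $A$ to its enveloping unital $\ell$-group, applying Lemma \ref{l:gammac(x)} with $(G,u):=(\C(X,\R),1_X)$. Throughout I identify $\C(X)\subseteq \C(X,\R)$ as in the paper, so that $\C(X)=\Gamma(\C(X,\R),1_X)$.

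First I form $G'$, the unital $\ell$-subgroup of $\C(X,\R)$ generated by $A$. By Lemma \ref{l:gammac(x)} one has $\Gamma(G',1_X)=A$, so in particular $1_X\in G'$, and the inclusion $A\subseteq G'$ means that $G'$ still separates the points of $X$. The first nontrivial step is to establish that $G'$ is divisible as an Abelian group. The key ingredient is the Riesz decomposition property in $\ell$-groups: any non-negative $g\in G'$ satisfies $g\leq k\cdot 1_X$ for some $k\in\N$, hence $g=\sum_{j=1}^{k}a_j$ for elements $a_j\in \Gamma(G',1_X)=A$. Given now any $n\in\N$, the hypothesis that $A$ is closed under multiplication by rationals in $\int$ provides $a_j/n\in A\subseteq G'$, so $g/n=\sum_{j=1}^{k}(a_j/n)\in G'$. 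A general element of $G'$ is a difference of two non-negatives (as the positive cone generates any $\ell$-group), so divisibility follows by subtracting the two resulting halves.

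With $G'$ a divisible unital $\ell$-subgroup of $\C(X,\R)$ separating the points of $X$, Lemma \ref{l:sw} yields that $G'$ is dense in $\C(X,\R)$ with respect to the uniform norm. The last step is to descend this density back to $A\subseteq\C(X)$ inside the unit ball. Fix $f\in\C(X)$ and $\varepsilon>0$. Choose $g\in G'$ with $\|f-g\|_\infty<\varepsilon$. Set
\[
g':=(g\vee 0)\wedge 1_X.
\]
Since $G'$ is a sublattice containing $0$ and $1_X$, we have $g'\in G'$; since $0\leq g'\leq 1_X$, the element $g'$ lies in $\Gamma(G',1_X)=A$. Because $f$ already takes values in $\int$ and truncating to $\int$ is $1$-Lipschitz for the uniform norm, $\|f-g'\|_\infty\leq \|f-g\|_\infty<\varepsilon$. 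This shows that $A$ is uniformly dense in $\C(X)$.

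The conceptual obstacle is concentrated in the divisibility step: without closure of $A$ under scalar multiplication by rationals in $\int$, there is no reason for $G'$ to be divisible, and the classical Stone--Weierstrass theorem in the form of Lemma \ref{l:sw} would not apply. The Riesz decomposition is what allows one to pass from divisibility of the MV-algebra (in the sense of closure under the rational scalars $1/n$ applied to elements of $[0,1]$) to divisibility of the generated $\ell$-group. Everything else is bookkeeping around Lemma \ref{l:gammac(x)} and truncation into the unit interval.
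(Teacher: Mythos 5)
Your proof is correct and follows essentially the same route as the paper: pass to the $\ell$-subgroup of $\C(X,\R)$ generated by $A$ via Lemma \ref{l:gammac(x)}, prove divisibility by decomposing positive elements as sums of elements of $A$ (unit bound plus Riesz decomposition) and dividing each summand by $n$ using the rational-scalar hypothesis, invoke Lemma \ref{l:sw}, and finally truncate approximants into $\int$ using the $1$-Lipschitz estimate $\|f-g\|\geq\|f-((g\wedge 1_X)\vee 0)\|$. No gaps.
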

\begin{proof}By Lemma \ref{l:gammac(x)}, the $\ell$-subgroup $G$ of $\C(X,\R)$ generated by $A$ is isomorphic to $\Xi(A)$ (so that $G$ contains $1_{X}$). 

We \emph{claim} that $G$ is divisible, i.e.\ for each $f\in G$, and each $n\in\N$, there is $g\in G$ such that $ng=f$.
Each element of $G$ may be expressed as the difference of two non-negative elements, thus it suffices to settle the \emph{claim} for $f\in G$ with $f\geq 0$.
Since $\Gamma(G,1_{X})=A$ by Lemma \ref{l:gammac(x)}, there are  finitely many elements $f_{1},\ldots,f_{l}\in A$ such that 
$f=\sum_{i=1}^{l}f_{i}$.
Given $n\in\N$, set $g:=\sum_{i=1}^{l}\frac{f_{i}}{n}$. Since $A$ is closed under multiplication by $\frac{1}{n}$, then $\frac{f_{i}}{n}\in A$, and  $g\in G$. Hence, $ng=f$ and the \emph{claim} is proved.

It is obvious that $G$ separates the points of $X$, because $A$ does. Then, by Lemma \ref{l:sw}, $G$ is dense in $\C(X,\R)$ with respect to the uniform norm $\|\cdot\|$. 
Since, for each $g\in G$ and each $f\in \C(X)$, it is elementary that $\|f-g\|\geq \|f-((g \wedge 1_{X})\vee 0_X)\|$, $A$ is dense in $\C(X)$.  
\end{proof}
\subsection{The Stone-Weierstrass Theorem for $\delta$-algebras.}\label{ss:SW-Delta} %
We begin with an adaptation to MV-algebras  of a standard fact on uniform approximations.
\begin{lemma}\label{increasing}
Let $X\neq \emptyset$ be a compact Hausdorff space,  and let $A$ be  a dense MV-subalgebra of the MV-algebra $\C(X)$. For each $f\in\C(X)$, there exists an increasing 
sequence $\{f_i\}_{i\in\N}\subseteq A$ which converges uniformly to $f$.
\end{lemma}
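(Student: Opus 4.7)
The plan is to reduce the problem to a simple approximation argument by exploiting density twice: once to approximate $f$, and once to approximate small constant functions which are \emph{a priori} not in $A$.

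First I would pick a summable sequence of positive reals $\epsilon_n\to 0$ with $\epsilon_n\leq 1/4$ (say $\epsilon_n:=1/2^{n+2}$). By density of $A$ in $\C(X)$, for each $n\in\N$ I choose $g_n\in A$ with $\|f-g_n\|_\infty<\epsilon_n$. Separately, since the constant function $2\epsilon_n\cdot 1_X$ lies in $\C(X)$, a second application of density yields $c_n\in A$ with $\|c_n-2\epsilon_n\cdot 1_X\|_\infty<\epsilon_n/2$, so that $\epsilon_n< c_n(x)< 3\epsilon_n$ pointwise on $X$.

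Next, I would set $h_n:=g_n\ominus c_n$. Recalling that in $\C(X)$ the operation $\ominus$ is given by $(g\ominus c)(x)=\max\{g(x)-c(x),0\}$, I would verify two properties. For the pointwise upper bound $h_n\leq f$: if $g_n(x)-c_n(x)\leq 0$ the inequality is trivial, and otherwise $h_n(x)=g_n(x)-c_n(x)\leq f(x)+\epsilon_n-\epsilon_n=f(x)$. For the lower bound, $h_n(x)\geq g_n(x)-c_n(x)\geq f(x)-\epsilon_n-3\epsilon_n=f(x)-4\epsilon_n$, so $\|f-h_n\|_\infty\leq 4\epsilon_n$, and moreover each $h_n$ lies in $A$ because $A$ is closed under the derived MV-operation $\ominus$.

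Finally, I would define $f_n:=h_1\vee h_2\vee\cdots\vee h_n$, which lies in $A$ (since $A$ is closed under $\vee$), is manifestly increasing in $n$, and satisfies $f_n\leq f$ by the pointwise upper bound on each $h_k$. Since $f-4\epsilon_n\leq h_n\leq f_n\leq f$, the sequence $\{f_n\}$ converges uniformly to $f$, completing the argument. No step here is a substantive obstacle: the only subtlety is realising that density must be invoked not only to approximate $f$ itself but also to approximate the small constants needed to create the pointwise gap that forces the $\ominus$-truncation to lie \emph{below} $f$; once one builds in this safety margin, the running maxima do the rest.
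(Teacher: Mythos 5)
Your proposal is correct, and it takes a genuinely more elementary route than the paper's. The paper's proof leaves the MV-algebra: it forms the $\ell$-subgroup $G$ of $\C(X,\R)$ generated by $A$, first proves that $G$ is dense in $\C(X,\R)$, then chooses $g_i\in G$ lying in the nested bands $f(x)-\tfrac{1}{2^{i-1}}<g_i(x)<f(x)-\tfrac{1}{2^i}$, so that the $g_i$ are automatically increasing and sit below $f$, and finally truncates, setting $f_i:=g_i\vee 0$ and invoking $\Gamma(G,1_X)=A$ (Lemma \ref{l:gammac(x)}) to conclude $f_i\in A$. You instead work entirely inside $A$ and $\C(X)$: density gives $g_n\in A$ close to $f$ and $c_n\in A$ close to a small positive constant (the point you correctly isolate, since constants need not belong to $A$), the truncated difference $h_n=g_n\ominus c_n$ then lies in $A$ and is squeezed between $f-4\epsilon_n$ and $f$, and the monotonicity that the paper gets from its nested bands before truncation you enforce afterwards by the running joins $f_n=h_1\vee\cdots\vee h_n$; since each $h_k\le f$ and $f-4\epsilon_n\le h_n\le f_n\le f$, uniform convergence follows. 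Note that the lemma (and its later use in Theorem \ref{t:SW-Delta} via Lemma \ref{isbgen}) only needs a non-decreasing sequence, which is also all the paper's own construction yields after truncation, so nothing is lost. What your route buys is independence from the $\Gamma$/$\Xi$ machinery, from Lemma \ref{l:gammac(x)}, and from the auxiliary density of $G$ in $\C(X,\R)$: you use only the hypothesis that $A$ is dense in $\C(X)$ together with closure of $A$ under the derived operations $\ominus$ and $\vee$. The paper's route is natural in context because that machinery is already set up for the surrounding Stone--Weierstrass argument, but within this lemma your argument is shorter and self-contained.
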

\begin{proof}Let $G$ be the $\ell$-subgroup of $\C(X,\R)$ generated by $A$. We first prove that $G$ is dense in $\C(X,\R)$. Each element of $\C(X,\R)$ may be expressed as the difference of two non-negative elements, therefore we can safely assume that $f\in \C(X,\R)$ satisfies $f\geq 0$.
Since $\Gamma(\C(X,\R),1_{X})=\C(X)$, there exist finitely many elements $f_{1},\ldots,f_{l}\in \C(X)$ such that $f=\sum_{i=1}^{l}f_{i}$.
By the density of $A$ in $\C(X)$, for each $i$ there exists a sequence $\{a^{i}_{j}\}_{j\in\N}\subseteq A$ that converges to $f_{i}$ uniformly. For each $j\in \N$, set
$a_{j}:=\sum_{i=1}^l a_{j}^{i}$,
so that $\{a_{j}\}_{j\in \N}\subseteq G$. An application of the triangular inequality now shows that $\{a_{j}\}_{j\in \N}$ converges to $f$ uniformly, so that $G$ is dense in $\C(X,\R)$.

 Now, let us pick an arbitrary $f\in\C(X)$. Consider the strictly decreasing sequence $\{r_i=\frac{1}{2^i}\}_{i\in\N}$, and define two more sequences $\{s_i\}_{i\in\N}$, $\{t_i\}_{i\in\N}\subseteq \R$ by setting  \[s_i:= \frac{1}{2}(r_i+r_{i+1})=\frac{3}{2^{i+2}}, \; \; t_i:= \frac{1}{2}(r_i-r_{i+1})=\frac{1}{2^{i+2}}.\] Then $s_i+t_i=\frac{1}{2^i}=r_i$ and $s_i-t_i=\frac{1}{2^{i+1}}=r_{i+1}$. 
 By the density of $G$, for each $i \in \N$ there is
 $g_i\in G$ such that $\|g_i-(f- s_i1_{X})\|<t_i$. This is equivalent to saying that, for all $x\in X$, 
 \[f(x)-\frac{1}{2^{i-1}}<g_i(x)<f(x)-\frac{1}{2^i}.\]
  Note that 
 \[f(x)-\frac{1}{2^{i-1}}<g_i(x)<f(x)-\frac{1}{2^i}<g_{i+1}(x)<f(x)-\frac{1}{2^{i+1}},\]
  i.e.\ $\{g_i\}_{i\in\N}\subseteq G$ is a strictly  increasing sequence. Set $f_i:= g_i\vee 0$. Then $f_i\in A$ since, by Lemma \ref{l:gammac(x)}, $\Gamma(G,1_{X})=A$. It is elementary that $\{f_i\}_{i\in\N}\subseteq A$ is an  increasing sequence. Moreover, the latter converges to $f$ uniformly because, for all $i\in\N$, \[\|f_i-f\|\leq \|g_i-f\|=\|(g_i-f+s_i1_{X})-s_i1_{X}\|\leq \|g_i-(f-s_i1_{X})\|+\|-s_i1_{X}\|<\frac{1}{2^{i+2}}+\frac{3}{2^{i+2}}=\frac{1}{2^i}.\]
\end{proof}
The following argument is essentially due to Isbell \cite{Isbell}. 
\begin{lemma}\label{isbgen}
Let $A$ be a $\delta$-subalgebra of $\C(X)$ for some $X\neq\emptyset$ in $\KH$, and let $f\in \C(X)$. Suppose that $\{s_i\}_{i\in\N}\subseteq A$ satisfies the following properties.
\begin{enumerate}
\item $\{s_i\}_{i\in\N}$ is an increasing sequence. 
\item $\{s_i\}_{i\in\N}$  converges uniformly to $f$.
\item $\|s_1\|\leq\frac{1}{2}$, and for all $i\geq 2$,   $\|s_i-s_{i-1}\|\leq \frac{1}{2^i}$.
\end{enumerate}
Then 
\[
f=\delta(2s_1,2^2(s_2\ominus s_1),\ldots, 2^i(s_i\ominus s_{i-1}),\ldots).
\]
In particular, $f\in A$.
\end{lemma}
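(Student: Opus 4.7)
The plan is to evaluate $\delta$ on the given sequence pointwise, using the concrete description provided by Lemma \ref{CofX}, namely $\delta(g_1,g_2,\ldots)=\sum_{i=1}^{\infty} g_i/2^i$ in $\C(X)$, and then recognise the resulting series as telescoping. Set $s_0:=0$ for notational convenience.

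Since $\{s_i\}$ is increasing, $s_i\geq s_{i-1}$ pointwise; and in $\C(X)$ a direct computation from the definition $x\ominus y:=x\odot\neg y$ shows that $s_i\ominus s_{i-1}$ coincides with the ordinary pointwise difference $s_i-s_{i-1}$, which is non-negative. Hypothesis (3), together with $\|s_1\|\leq 1/2$ for the case $i=1$, gives $\|s_i-s_{i-1}\|\leq 1/2^i$. The term $g_i:=2^i(s_i\ominus s_{i-1})$ is to be read, following Notation \ref{n:iterated sum}, as the $2^i$-fold $\oplus$-sum of $s_i\ominus s_{i-1}$; because that element has uniform norm at most $1/2^i$, every partial $\oplus$-sum remains bounded by $1$ pointwise, the truncation in $u\oplus v=\min(u+v,1)$ never activates, and $g_i$ equals the continuous function $2^i(s_i-s_{i-1})$ pointwise, taking values in $[0,1]$. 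Since $A$ is closed under the MV-algebra operations, each $g_i\in A$.

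Consequently, by Lemma \ref{CofX},
\[
\delta(g_1,g_2,\ldots)=\sum_{i=1}^{\infty}\frac{g_i}{2^i}=\sum_{i=1}^{\infty}(s_i-s_{i-1}),
\]
and the $n$-th partial sum of the right-hand series telescopes to $s_n-s_0=s_n$, which converges uniformly to $f$ by hypothesis (2). Hence $\delta(g_1,g_2,\ldots)=f$, establishing the asserted identity. Since each $g_i$ lies in $A$ and $A$ is a $\delta$-subalgebra of $\C(X)$, applying $\delta$ keeps us inside $A$, whence $f\in A$.

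No step presents a genuine obstacle: the whole argument reduces to verifying that, under the given norm bounds, the MV-operations $\ominus$ and iterated $\oplus$ agree pointwise with ordinary real subtraction and integer multiplication, after which the claim becomes the standard telescoping representation of a uniformly convergent sequence as a series.
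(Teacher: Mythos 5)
Your proof is correct and follows essentially the same route as the paper's: interpret $2^i(s_i\ominus s_{i-1})$ via the norm bounds as the ordinary function $2^i(s_i-s_{i-1})$ (no truncation in $\ominus$ or in the iterated $\oplus$), then evaluate $\delta$ by its series definition from Lemma \ref{CofX} and telescope, using uniform convergence to conclude the sum is $f$. No discrepancies worth noting.
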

\begin{proof}Set $s_0:= 0\in A$, and 
\[\vec{s}:=\{2^i(s_i\ominus s_{i-1})\}_{i\in\N}=\{2s_1,2^2(s_2\ominus s_1),\ldots, 2^i(s_i\ominus s_{i-1}),\ldots\}.\] 
The assumptions in items 1 and 3 entail through an elementary computation that
\[
2^i(s_i\ominus s_{i-1})=\sum_{j=1}^{2^{i}}(s_i-s_{i-1}).
\]
Then,
 \begin{align*}
\delta(\vec{s})&=\sum_{i=1}^{\infty}{\frac{2^i(s_i- s_{i-1})}{2^i}}\\
&=\lim_{n\to\infty}{\sum_{i=1}^n {(s_i-s_{i-1})}} \\
&=\lim_{n\to\infty}{s_n} = f,\end{align*}
 where the last equality holds by  the assumption in item 2.
 \end{proof}
We are now ready to prove:
\begin{theorem}[The Stone-Weierstrass Theorem for $\delta$-algebras]\label{t:SW-Delta}Let $X$ be a compact Hausdorff space, and let $A\subseteq\C(X)$ be a $\delta$-subalgebra that separates the points of $X$. Then $A=\C(X)$.
\end{theorem}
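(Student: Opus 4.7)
If $X = \emptyset$ then $\C(X)$ is a singleton, so $A = \C(X)$ automatically; henceforth assume $X \neq \emptyset$. I would first observe that $A$ is closed under multiplication by any real scalar $r \in \int$. Indeed, writing $r = \sum_{i \geq 1} \epsilon_i / 2^i$ in binary with $\epsilon_i \in \{0,1\}$, for any $f \in A$ each $\epsilon_i f$ belongs to $\{0, f\} \subseteq A$, so $\delta(\epsilon_1 f, \epsilon_2 f, \ldots) \in A$; by Lemma \ref{CofX} this element coincides with $rf$ interpreted in $\C(X)$. In particular $A$ is closed under multiplication by rational scalars in $\int$, and since $A$ separates the points of $X$, the MV-algebraic Stone-Weierstrass Theorem (Lemma \ref{l:mvsw}) implies that $A$ is dense in $\C(X)$ in the uniform norm.

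Fix $f \in \C(X)$; the plan is to realise $f$ as a $\delta$-expansion of elements of $A$ via Isbell's Lemma \ref{isbgen}. Lemma \ref{increasing} supplies an increasing sequence $\{f_n\}_{n \in \N} \subseteq A$ with $f_n \leq f$ and $f_n \to f$ uniformly; passing to a subsequence I may assume $\|f - f_n\| \leq 1/2^{n+2}$ for every $n$. Set $c_n := \neg \fmn(1) \in A$, which evaluates to $(1 - 1/2^n)\, 1_X$ in $\C(X)$, and define
\[s_n := f_n \wedge c_n \in A.\]
Monotonicity of $\{s_n\}$ follows from $f_{n-1} \leq f_n$ and $c_{n-1} \leq c_n$; the bound $\|s_1\| \leq 1/2$ is immediate since $s_1 \leq c_1$; and a short case distinction gives $\|f - s_n\| \leq \max(\|f - f_n\|, 1 - c_n) \leq 1/2^n$, hence $s_n \to f$ uniformly. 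Granted additionally that $\|s_n - s_{n-1}\| \leq 1/2^n$ for $n \geq 2$, Lemma \ref{isbgen} yields $f = \delta(2 s_1, 2^2 (s_2 \ominus s_1), \ldots) \in A$, which completes the proof.

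The delicate step is precisely the increment bound $\|s_n - s_{n-1}\| \leq 1/2^n$, which I would verify pointwise at $x \in X$. The only case that genuinely uses the subsequence control is $f_n(x) > c_n$: then $f(x) > c_n$, and the estimate $\|f - f_{n-1}\| \leq 1/2^{n+1}$ forces $f_{n-1}(x) \geq c_n - 1/2^{n+1} > c_{n-1}$, so $s_{n-1}(x) = c_{n-1}$ while $s_n(x) = c_n$, giving a difference of exactly $c_n - c_{n-1} = 1/2^n$. In the remaining cases the increment is bounded either by $\|f_n - f_{n-1}\| \leq 1/2^{n+2} + 1/2^{n+1} < 1/2^n$ or again by $c_n - c_{n-1} = 1/2^n$. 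The \emph{principal obstacle} is precisely the simultaneous fit of the three rigid norm constraints of Lemma \ref{isbgen} with monotonicity and uniform convergence: a naive rescaling such as $(1 - 1/2^n) f_n$ introduces cross-terms of order $1/2^n$ that wreck the increment estimate, whereas using $f_n$ directly may violate $\|s_1\| \leq 1/2$. Taking the meet with $c_n$, available inside $A$ thanks to the derived operation $\fmn$, is what reconciles all the constraints at once.
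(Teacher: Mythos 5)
Your proof is correct, and its first half (closure of $A$ under multiplication by scalars in $\int$ via binary expansions and $\delta$, then density via the MV-algebraic Stone--Weierstrass Theorem, Lemma \ref{l:mvsw}) coincides with the paper's argument; the divergence is in how the rigid hypotheses of Isbell's Lemma \ref{isbgen} are arranged. The paper's device is a constant rescaling: starting from the increasing sequence $\{f_j\}\subseteq A$ of Lemma \ref{increasing}, it passes to $\{f_j/2\}\subseteq A$ (legitimate by the scalar closure already established), so that $\|f_j/2\|\leq\tfrac12$ holds automatically, extracts a subsequence whose successive increments are at most $\tfrac1{2^i}$, applies Lemma \ref{isbgen} to conclude $\tfrac f2\in A$, and recovers $f=\tfrac f2\oplus\tfrac f2\in A$. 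You instead keep the target $f$ itself and clip the approximants from above by the constants $c_n=\neg\fmn(1)=1-\tfrac1{2^n}$, setting $s_n=f_n\wedge c_n$; your verification is sound: monotonicity and $\|s_1\|\leq\tfrac12$ are immediate, $\|f-s_n\|\leq\tfrac1{2^n}$ follows since $s_n\leq f_n\leq f$, and in the pointwise case analysis for $\|s_n-s_{n-1}\|\leq\tfrac1{2^n}$ the critical case $f_n(x)>c_n$ indeed forces $s_{n-1}(x)=c_{n-1}$ (because $\|f-f_{n-1}\|\leq\tfrac1{2^{n+1}}$ and $c_n-c_{n-1}=\tfrac1{2^n}$), so Lemma \ref{isbgen} applies directly to $f$. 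Both routes prove the theorem; yours avoids the final doubling step $f=\tfrac f2\oplus\tfrac f2$ at the cost of the meet-and-case-analysis, while the paper's halving needs no increment bookkeeping beyond choosing a subsequence. Your closing remark about the ``principal obstacle'' is therefore somewhat overstated: rescaling by the constant $\tfrac12$ (rather than by $1-\tfrac1{2^n}$) reconciles all three constraints painlessly, which is exactly the published proof.
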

\begin{proof}If $X=\emptyset$ then $A$ is trivial and the theorem obviously holds. Assume $X\neq \emptyset$. Note that $A$ is closed under multiplication by scalars in $\int$.
Indeed, let  $r\in
\int$, and let $f\in A$. Pick a binary expansion $\{r_i\}_{i\in\N}\in\{0,1\}^{\N}$ of $r$, and define $\vec{f}:=\{f_i\}_{i\in\N}\subseteq A$ by setting $f_i:=0$ if $r_i=0$, and $f_i:=f$ if $r_i=1$. Consequently, \[\delta(\vec{f})=\sum_{i=1}^{\infty}{\frac{f_i}{2^i}}=\sum_{i=1}^{\infty}
{\frac{r_i f}{2^i}}=f\cdot\sum_{i=1}^{\infty}{\frac{r_i}{2^i}}=rf \in A,\]   
as was to be shown.
It now follows from Lemma  \ref{l:mvsw} that $A$ is dense in $\C(X)$.  We prove  $A=\C(X)$.

For any $f\in\C(X)$, by Lemma \ref{increasing} there is an increasing sequence $\{f_j\}_{j\in\N}\subseteq A$ that converges uniformly to $f$. Consider the sequence $\{\frac{f_j}{2}\}_{j\in\N}\subseteq A$, and 
note that $\|\frac{f_j}{2}\|\leq\frac{1}{2}$ for all $j\in \N$. Extract a subsequence $\{s_i\}_{i\in\N}\subseteq\{\frac{f_j}{2}\}_{j\in\N}$ satisfying $\|s_i-s_{i-1}\|\leq \frac{1}{2^i}$ for all $i\in\N$. Now $\frac{f}{2}\in A$ 
because $\{s_i\}_{i\in\N}$ satisfies the hypotheses of Lemma \ref{isbgen}, and thus $f \in A$. 
 \end{proof}

\section{Main result.}\label{s:main}
We now return to the  forgetful functor
$\forg{-} \colon \Va\to \MV$ that sends a $\delta$-algebra to its underlying MV-algebra (Remark \ref{r:reduct}). Recall that $\forg{-}$ is full (Theorem \ref{thm:fullness}); it is obviously faithful. Let us write $\forg{\Va}$ for the full subcategory of $\MV$ on the objects in the image of $\forg{-}$. We  can now compare $\MV_{\C}$ (Notation \ref{not:mvc}) and $\forg{\Va}$. 
\begin{theorem}\label{t:deltaismvc}
$\forg{\Va}=\MV_{\C}$.
\end{theorem}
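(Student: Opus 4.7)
The plan is to prove the two inclusions separately, drawing on the machinery built in the previous sections.

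For the inclusion $\MV_{\C}\subseteq \forg{\Va}$: Let $A$ be an object of $\MV_{\C}$, so that the unit component $\eta_{A}\colon A\to \C(\Max{A})$ is an isomorphism of MV-algebras. By Lemma \ref{CofX}, $\C(\Max{A})$ carries a canonical structure of $\delta$-algebra extending its MV-algebraic structure. Transporting this $\delta$-structure along the MV-isomorphism $\eta_{A}^{-1}$ equips $A$ with an infinitary operation $\delta$ making it a $\delta$-algebra whose underlying MV-algebra is $A$ itself. Hence $A$ lies in $\forg{\Va}$.

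For the inclusion $\forg{\Va}\subseteq \MV_{\C}$: Let $A$ be a $\delta$-algebra; we must show that $\eta_{A}\colon A\to \C(\Max{A})$ is an isomorphism in $\MV$. Injectivity is immediate: by Theorem \ref{semisem}, the MV-reduct $\forg{A}$ is semisimple, so $\eta_A$ is injective by Corollary \ref{cor:reflection}.(2). For surjectivity, the strategy is to exhibit $\eta_{A}(A)$ as a point-separating $\delta$-subalgebra of $\C(\Max{A})$ and invoke the Stone-Weierstrass Theorem for $\delta$-algebras (Theorem \ref{t:SW-Delta}). First, since both $A$ and $\C(\Max{A})$ are $\delta$-algebras and $\eta_{A}$ is an MV-homomorphism, Theorem \ref{thm:fullness} (fullness of the forgetful functor) guarantees that $\eta_{A}$ is in fact a $\delta$-homomorphism; thus its image $\eta_{A}(A)$ is a $\delta$-subalgebra of $\C(\Max{A})$. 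Next, the image is point-separating: given distinct $\m_{1},\m_{2}\in \Max{A}$ there exists $a\in A$ lying in one but not the other, say $a\in\m_{1}\setminus\m_{2}$; then $\eta_{A}(a)(\m_{1})=\h_{\m_{1}}(a/\m_{1})=0$, while $\eta_{A}(a)(\m_{2})=\h_{\m_{2}}(a/\m_{2})\neq 0$ since $a/\m_{2}\neq 0$ and $\h_{\m_{2}}$ is injective by Hölder's Theorem (Lemma \ref{MVHolder}). Applying Theorem \ref{t:SW-Delta} then gives $\eta_{A}(A)=\C(\Max{A})$, completing the proof of surjectivity.

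None of the steps is truly an obstacle, since all the heavy lifting has already been done: semisimplicity (Section \ref{s:semisimple}), fullness (Section \ref{s:etadelta}) and the Stone-Weierstrass Theorem (Section \ref{s:SW}) have been tailored precisely for this moment. The only minor point requiring care is the verification in the first inclusion that transporting the $\delta$-structure along an MV-isomorphism really yields a $\delta$-algebra, but this is automatic because the axioms \textbf{A1}--\textbf{A6} of Definition \ref{d:delta} are equations in the combined signature, and isomorphisms of algebraic structures preserve validity of equations under transport.
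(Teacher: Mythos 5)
Your proof is correct and follows essentially the same route as the paper: injectivity from semisimplicity (Theorem \ref{semisem} with Corollary \ref{cor:reflection}.(2)), the observation via Theorem \ref{thm:fullness} that $\eta_{A}$ is a $\delta$-homomorphism with point-separating image, and surjectivity from Theorem \ref{t:SW-Delta}. The only difference is presentational: the paper treats the inclusion $\MV_{\C}\subseteq\forg{\Va}$ as immediate (spelling out the transport of the $\delta$-structure along $\eta_{A}^{-1}$ only in the paragraph after the theorem, when constructing the inverse functor $F$), whereas you make that easy direction explicit.
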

\begin{proof}It suffices to prove that $\forg{A}$ is in $\MV_{\C}$ for each $\delta$-algebra $A$. Consider then the component $\eta_{\forg{A}}$ as in \eqref{eq:eta}, with the aim of showing that it is an isomorphism; this amounts to showing that $\eta_{\forg{A}}$ is bijective. By Theorem \ref{semisem} together with Corollary \ref{cor:reflection}.(2), $\eta_{\forg{A}}$ is injective. Further, note that $\eta_{\forg{A}}$ is a $\delta$-homomorphism by Theorem \ref{thm:fullness}, and that its  image  is a separating subset of $\C(\Max{\forg{A}})$:  given $\m\neq \mathfrak{n} \in \Max{\forg{A}}$, there must be $a\in\m\setminus\mathfrak{n}$, and  $\widehat{a}\colon \Max{\forg{A}}\to \int$ separates $\m$ from $\mathfrak{n}$. Therefore, Theorem \ref{t:SW-Delta} applies to yield the surjectivity of $\eta_{\forg{A}}$.
\end{proof}

Recall from Section \ref{s:reflection} the adjunction $\Max\dashv\C\colon\KH^{\rm op}\to\MV$. By Theorem \ref{t:deltaismvc} we obtain a  forgetful functor 
$\forg{-}\colon\Va\to\MV_{\rm C}$. This functor has an inverse $F\colon \MV_{\C}\to \Va$. Indeed, given an MV-algebra $A$ in $\MV_{\C}$, the arrow $\eta_{A}$ is an 
MV-isomorphism and  $A$ inherits from $\C(\Max{A})$ a structure of $\delta$-algebra whose MV-algebraic reduct is $A$. Such a structure is unique by Corollary \ref{cor:unique}.  Call  $F(A)$ the resulting
$\delta$-algebra. By Theorem \ref{thm:fullness}, $F$ extends to a functor $F\colon \MV_{\rm C}\to\Va$ in the obvious manner. Thus, $\Va$ 
and $\MV_{\rm C}$ are isomorphic categories. Now, recall from Corollary \ref{cor:reflection}.(1) that the adjunction $\Max\dashv\C$ restricts to an equivalence between $\MV_{\rm C}$ and $\KH^{\rm op}$. Continuing to denote by $\Max$ and $\C$ 
the restricted functors, this proves: {\it The functors $F\circ \C\colon \KH^{\rm op}\to\Va$ and $\Max\circ \forg{-}\colon \Va\to \KH^{\rm op}$ form an equivalence of categories}.

Note that the composition $F\circ \C \colon \KH^{\rm op}\to\Va$ coincides, up to variance, with the contravariant 
$\hom$-functor $\hom_{\KH}{(-,\int)} \colon \KH\to \Va$. 
Moreover, the composition  $\Max\circ \forg{-}\colon \Va\to \KH^{\rm op}$ coincides, up to variance, with the contravariant 
$\hom$-functor $\hom_{\Va}{(-,\int)}\colon \Va \to \KH$, because of Lemma \ref{full01}. Our main result can finally be stated as follows.
\begin{theorem}\label{thm:main}The contravariant functors  $\hom_{\KH}{(-,\int)} \colon \KH\to  \Va$ and $\hom_{\Va}{(-,\int)}\colon \Va \to \KH$ form a dual equivalence of categories. \qed
\end{theorem}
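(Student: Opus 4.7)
The plan is to assemble the theorem from the pieces already in place rather than prove anything new. The work has been compartmentalised so that Theorem \ref{thm:main} becomes essentially a bookkeeping exercise, and I would present it as such.

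First I would promote the forgetful functor $\forg{-}\colon\Va\to\MV$ to an equivalence $\forg{-}\colon\Va\to\MV_{\C}$ with an explicit pseudo-inverse $F\colon\MV_{\C}\to\Va$. On objects, given $A$ in $\MV_{\C}$ the map $\eta_{A}\colon A\to\C(\Max{A})$ is an MV-isomorphism, so I transport the canonical $\delta$-structure on $\C(\Max{A})$ (Lemma \ref{CofX}) along $\eta_{A}^{-1}$ to obtain a $\delta$-algebra $F(A)$ whose MV-reduct is $A$; by Corollary \ref{cor:unique} no choice is involved. On arrows, every MV-homomorphism between $\delta$-algebras is automatically a $\delta$-homomorphism by Theorem \ref{thm:fullness}, so $F$ acts as the identity on morphisms. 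Combined with Theorem \ref{t:deltaismvc} this gives $\forg{-}\dashv F$ as mutually inverse isomorphisms of categories.

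Next I would splice this isomorphism into Corollary \ref{cor:reflection}.(1), which restricts $\Max\dashv\C$ to an equivalence between $\MV_{\C}$ and $\KH^{\rm op}$. Composing, the functors
\[
F\circ\C\colon \KH^{\rm op}\longrightarrow\Va \quad\text{and}\quad \Max\circ\forg{-}\colon\Va\longrightarrow\KH^{\rm op}
\]
form an equivalence of categories. It remains only to identify these composites with the hom-functors appearing in the statement. The composition $F\circ\C$ sends a compact Hausdorff space $X$ to $\C(X)$ equipped with its canonical $\delta$-structure from Lemma \ref{CofX}, which is precisely $\hom_{\KH}(X,\I)$ with its pointwise $\delta$-algebra structure. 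Conversely, $\Max\circ\forg{-}$ sends a $\delta$-algebra $A$ to $\Max{\forg{A}}$, which by H\"older's Theorem (Lemma \ref{MVHolder}) is naturally homeomorphic to $\hom_{\MV}(\forg{A},\I)$; but Lemma \ref{full01} tells us that every MV-homomorphism $\forg{A}\to\I$ is already a $\delta$-homomorphism, so $\hom_{\MV}(\forg{A},\I)=\hom_{\Va}(A,\I)$ as sets, and the two topologies (initial with respect to evaluations) coincide as well.

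There is no real obstacle here, since the theorem is a corollary of the heavy lifting done in Sections \ref{s:semisimple}--\ref{s:SW}; the only point that deserves care is the verification that $F$ is actually functorial and that the two composite functors literally agree with the hom-functors rather than merely being naturally isomorphic to them. This is routine once Corollary \ref{cor:unique} and Lemma \ref{full01} are invoked, and I would simply record it before concluding with $\square$.
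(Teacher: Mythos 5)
Your proposal is correct and matches the paper's own argument essentially step for step: the paper likewise builds the inverse functor $F\colon\MV_{\C}\to\Va$ by transporting the $\delta$-structure of $\C(\Max{A})$ along $\eta_{A}$ (uniqueness via Corollary \ref{cor:unique}, functoriality via Theorem \ref{thm:fullness}), composes with the restricted equivalence of Corollary \ref{cor:reflection}.(1) using Theorem \ref{t:deltaismvc}, and then identifies $F\circ\C$ and $\Max\circ\forg{-}$ with the hom-functors $\hom_{\KH}(-,\int)$ and $\hom_{\Va}(-,\int)$ via Lemmas \ref{CofX}, \ref{MVHolder} and \ref{full01}. No gaps; your extra remark about checking that the composites agree with the hom-functors (up to natural isomorphism) is exactly the point the paper also records.
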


\medskip
\begin{acks}
We gratefully acknowledge  partial support by the Italian FIRB ``Futuro in Ricerca'' grant no.\ RBFR10DGUA, and by the Italian PRIN grant ``Metodi Logici per 
il Trattamento dell'Informa\-zio\-ne". The first-named author is much indebted to Dirk Hofmann and to Ioana Leu\c{s}tean for several  conversations on 
infinitary varieties and on MV-algebras dually equivalent to compact Hausdorff spaces, respectively, from which he has learned a lot. He is also grateful to 
Alexander Kurz for directing him towards the references \cite{PelletierRosicky89, PelletierRosicky93}. We both  would like to thank Ji\v{r}\'i Rosick\'y, who in 
e-mail communications helped us clarifying the current status of the axiomatisation problem for the category opposite to compact Hausdorff spaces, and kindly provided us with historical information concerning the solution of Bankston's conjecture (cf.\ Footnote \ref{fn:bank}). We are also greatly indebted to Roberto Cignoli and Eduardo Dubuc for several useful comments on a preliminary version of this paper that led us to improve the presentation of our results. 
Finally, we are most grateful to two anonymous referees, whose comments further helped us to achieve a more effective presentation of our results.
Needless to say, all remaining  deficiencies in presentation are entirely due to us.
\end{acks}

\bibliographystyle{plain}

\end{document}